\newtheorem{thm}{Theorem}[section]
\newtheorem{cor}[thm]{Corollary}
\newtheorem{lem}[thm]{Lemma}
\newtheorem{prop}[thm]{Proposition}
\theoremstyle{definition}
\newtheorem{defin}[thm]{Definition}
\newtheorem{rem}[thm]{Remark}
\newtheorem*{rem*}{Remark}
\numberwithin{equation}{section}
\newcommand{\8}{\infty}
\newcommand{\eps}{\varepsilon}
\newcommand{\s}{\sigma}
\renewcommand{\a}{\alpha}
\newcommand{\E}{\mathbb{E}}
\newcommand{\Prob}{\mathbb{P}}
\newcommand{\Pfs}[1][]{\ensuremath{\mathbb{P}_{#1}\text{-a.s.}}}
\newcommand{\Erw}[2][]{\ensuremath{\mathbb{E}_{#1} \left( {#2} \right)}}
\newcommand{\N}{\mathbb{N}}
\newcommand{\R}{\mathbb{R}}
\newcommand{\B}{\mathfrak{B}}
\newcommand{\V}{\mathbb{V}}
\newcommand{\F}{\mathcal{F}}
\newcommand{\llam}{{l}}
\newcommand{\supp}{\mathrm{supp}\,  }
\renewcommand{\epsilon}{\varepsilon}
\renewcommand{\rho}{\varrho}
\newcommand{\1}[1][]{\mathbf{1}_{#1}}
\newcommand{\norm}[1]{\ensuremath{\left\| {#1} \right\|}}
\newcommand{\abs}[1]{\ensuremath{\left| {#1} \right|}}
\newcommand{\skalar}[1]{\langle #1 \rangle}
\newcommand{\eqdist}{\stackrel{d}{=}}
\newcommand{\Step}[1][]{\textsc{Step #1}}
\newcommand{\ST}{\mathcal{S}}
\newcommand{\Pset}[1][]{\mathcal{P}_{#1}}
\newcommand{\law}[1]{\mathcal{L}\left( #1 \right)}
\newcommand{\tree}{\mathfrak{T}}
\newcommand{\Id}{\matrix{Id}}
\newcommand{\Rdnn}{{\R^d_\ge}}
\newcommand{\Rp}{\R_>}
\newcommand{\Rnn}{\R_\ge}
\newcommand{\Cf}[2][]{\mathcal{C}^{#1}\left( #2 \right)}
\renewcommand{\B}{\mathcal{B}}
\newcommand{\condC}{(C)}
\newcommand{\Sp}{\mathbb{S}_\ge}
\newcommand{\Sd}{S^{d-1}}
\newcommand{\Mset}{\mathcal{M}}
\newcommand{\interior}[1]{\mathrm{int}({#1})}
\newcommand{\est}[1][s]{e^*_{#1}}
\newcommand{\Pst}[1][s]{P_*^{#1}}
\newcommand{\nust}[1][s]{\nu^*_{#1}}
\newcommand{\pist}[1][s]{\pi_*^{#1}}
\newcommand{\Ps}[1][s]{P^{#1}}
\newcommand{\Qs}[1][s]{Q^{#1}}
\newcommand{\nus}[1][s]{\nu_{#1}}
\newcommand{\mM}{\mathbf{M}}
\newcommand{\mT}{\mathbf{T}}
\newcommand{\mPi}{\matrix{\Pi}}
\newcommand{\mL}{\matrix{L}}
\renewcommand{\matrix}[1]{\mathbf{#1}}
\newcommand{\mb}{\matrix{b}}
\newcommand{\LTfp}{\psi}
\newcommand{\as}{\cdot}
\newcommand{\LTa}[1][]{\phi^{#1}}
\newcommand{\T}{T}
\newcommand{\Lp}[2][p]{L^{#1}\left( #2 \right)}
\newcommand{\deins}{\mathbf{\vartheta_d}}
\newcommand{\eins}{\mathbf{\vartheta_1}}
\newcommand{\ma}{\matrix{a}}
\newcommand{\sline}[1][]{\mathcal{I}_{#1}}
\newcommand{\slineu}[1][]{\mathcal{I}_{#1}^u}
\renewcommand{\P}[2][]{\ensuremath{\mathbb{P}_{#1} \left( {#2} \right)}}
\newcommand{\Ex}[2][]{\ensuremath{\mathbb{E}_{#1}^\alpha \left( {#2} \right)}}
\newcommand{\mao}{{\ma_0}}
\renewcommand{\est}[1][s]{H^{#1}}
\renewcommand{\1}[1][]{\mathds{1}_{#1}}
\renewcommand{\deins}{\mathbf{1}}
\begin{document}





\title{Fixed Points of the Multivariate Smoothing Transform: The Critical Case}

    \author{Konrad Kolesko$^*$, Sebastian Mentemeier$^\dagger$}
 \address{$^*$ {Uniwersytet Warszawski\\Instytut Matematyki\\ul. Banacha 2\\
02-097 Warszawa, Poland} \\ $^\dagger$ Uniwersytet Wroc\l awski \\ Instytut Matematyczny \\ pl. Grunwaldzki 2/4 \\ 50-384 Wroc\l aw, Poland}
 \email{kolesko@math.uni.wroc.pl, mente@math.uni.wroc.pl}


\begin{abstract}
Given a sequence $(\mT_1, \mT_2, \dots)$ of random $d \times d$ matrices with nonnegative entries, suppose there is a random vector $X$ with nonnegative entries, such that 
$ \sum_{i \ge 1} \mT_i X_i $ has the same law as $X$, where $(X_1, X_2, \dots)$ are i.i.d.~copies of $X$, independent of $(\mT_1, \mT_2, \dots)$. 
Then (the law of) $X$ is called a fixed point of the multivariate smoothing transform. Similar to the well-studied one-dimensional case $d=1$, a function $m$ is introduced, such that the existence of $\alpha \in (0,1]$ with $m(\alpha)=1$ and $m'(\alpha) \le 0$ guarantees the existence of nontrivial fixed points. We prove the uniqueness of fixed points in the critical case $m'(\alpha)=0$ and describe their tail behavior. This complements recent results for the non-critical multivariate case. Moreover, we introduce the multivariate  analogue of the derivative martingale and prove its convergence to a non-trivial limit.
\end{abstract}

\subjclass[2010]{60E05, 60J80,  60G44 }

\keywords{Multivariate Smoothing Transform, Branching Random Walk, Harris Recurrence, Products of Random Matrices, Markov Random Walk, Derivative Martingale}

\maketitle


\section{Introduction}

Let $d \ge 2$ and
 $(\mT_i)_{i \ge 1}$ be a sequence of random $d\times d$-matrices  with nonnegative entries.  Assume that $$N := \# \{ i \, : \, \mT_i \neq 0 \}$$ is finite a.s. We will presuppose throughout that the $(\mT_i)_{i \ge 1}$ are ordered in such a way that $\mT_i \neq 0$ if and only if $i \le N$.  Given a random variable $X \in \Rdnn=[0,\infty)^d$, let $(X_i)_{i \ge 1}$ be i.i.d.~copies of $X$ and independent of $(\mT_i)_{i \ge 1}$. Then
$ \sum_{i=1}^N \mT_i X_i$
defines a new random variable in $\Rdnn$. If it holds that 
\begin{equation} \label{eq:FPE} X \eqdist \sum_{i=1}^N \mT_i X_i, \end{equation}
where $\eqdist$ means same law, then we call the law $\law{X}$ of $X$ a fixed point of the multivariate smoothing transform (associated with $(\mT_i)_{i \ge 1}$). By an slight abuse of notation, we will also call $X$ a fixed point.

This notion goes back to Durrett and Liggett \cite{DL1983}. For $d=1$, they proved (see also \cite{Liu1998,ABM2012}) that properties of fixed points are encoded in the function $m(s) := \E \sum_{i=1}^N T_i^s$ (here $(T_i)_{i\ge 1}$ are nonnegative random numbers): If $m(\alpha) =1$ and $m'(\alpha) \le 0$ for some $\alpha \in (0,1]$ and some non-lattice and moment assumptions are satisfied, then there is a fixed point which is unique up to scaling. Conversely, the condition $m(\alpha) =1$, $m'(\alpha) \le 0$ for some $\alpha \in (0,1]$ is also necessary for the existence of fixed points. 

Moreover, if $\LTfp(r) = \Erw{e^{-rX}}$ is the Laplace transform of a fixed point, then there is a positive function $L$, slowly varying at $0$, and $K >0$ such that
\begin{equation} \label{eq:slowvar1} \lim_{r \to 0} \frac{1-\LTfp(r)}{L(r) r^\alpha} = K.  \end{equation}
The function $L$ is constant if $m'(\alpha) < 0$ and $L(t) = (\abs{ \log t} \vee 1)$ if $m'(\alpha)=0$, the latter being called the \emph{critical case}. For $\alpha<1$, the property \eqref{eq:slowvar1} implies that the fixed points have Pareto-like tails with index $\alpha$, i.e.~$ \lim_{t \to \infty} t^{-\alpha}\P{X>t}/L(1/t) \in (0, \infty)$, see \cite{Liu1998} for details. Tail behavior in the case $\alpha=1$, in which there is no such implication, is investigated in  \cite{Guivarch1990,Liu1998,Buraczewski2009}. 

Existence and uniqueness results in the multivariate setting $d \ge 2$ for the non-critical case have been recently proved in \cite{Mentemeier2013}. The aim of this note is to provide the corresponding result for the multivariate critical case. In order to so, we will first review necessary notation and definitions from \cite{Mentemeier2013}, in particular introducing the multivariate analogue of the function $m$, as well as a result about the existence of fixed points in the critical case. Following the approach in \cite{Biggins1997,Biggins2005b,Kyprianou1998} we will then prove that a multivariate regular variation property similar to \eqref{eq:slowvar1} holds for fixed points (with an essentially unique, but yet undetermined slowly varying function $L$), which we use in order to prove the uniqueness of fixed points, up to scalars. Under some extra (density) assumption, we identify the slowly varying function to be the logarithm also in the multivariate case, which allows us to introduce and prove convergence of the multivariate version of the so-called \emph{derivative martingale}, a notion coined in \cite{Biggins2004}. It appears prominently in the limiting distribution of the minimal position in branching random walk, see \cite{Aidekon2013,Aidekon2014,Biggins2004} for details and further references.


\section{Statement of Results}

We start by introducing the assumptions and some notation needed therefore.
Write $\Pset(\Rdnn)$ for the set of probability measures on $\Rdnn$ and  $\Mset:=M(d\times d,\Rnn)$ for the set of $d \times d$-matrices with nonnegative entries. Given a sequence $\T:=(\mT_i)_{i \ge 1}$ of random matrices  from $\Mset$, only the first $N$ of which are nonzero, with $N< \infty$ a.s., we aim to determine the set of fixed points of the mapping $\ST : \Pset(\Rdnn) \to \Pset(\Rdnn),$
$$ \ST \eta := \law{\sum_{i=1}^N \mT_i X_i}, \qquad \text{for $(X_i)_{i\ge 1}$ i.i.d.~ with law $\eta$ and independent of $(\mT_i)_{i \ge 1}$}.$$
Without further mention, we assume $(\Omega, \B, \Prob)$ to be a probability space which is rich enough to carry all the occurring random variables.

\subsection{The weighted branching process and iterations of $\ST$}
Let $\V := \bigcup_{n=0}^\infty \N^n$ be a tree with root $\emptyset$ and Ulam-Harris labeling. We write $\abs{v}=n$ if $v=v_1 \cdots v_n \in \{1, \dots,N\}^n$, $v|k = v_1 \cdots v_k$ for the ancestor in the $k$-th generation and $vi=v_1 \cdots v_n i$ for the $i$-th child of $v$, $i \in \N$. 

To each node $v \in \V$ assign an independent copy $\T(v)$ of $\T$ and, given a random variable $X \in \Rdnn$, as well an independent copy $X(v)$ of $X$, such that
$(\T(v))_{v \in \V}$ and $(X(v))_{v \in \V}$ are independent. Introduce a filtration by
$$ \B_n ~:=~ \sigma\bigg( (\T(v))_{\abs{v}<n}\bigg).$$
Upon defining  recursively the product of weights along the path from $\emptyset$ to $v$ by
$$ \mL(\emptyset) := \Id, \qquad \mL(vi) = \mL(v) \mT_i(v),  $$
we obtain the iteration formula
$$ \ST^n \law{X} = \law{\sum_{\abs{v}=n} \mL(v) X(v)}, $$
which in terms of Laplace transforms $\LTa(x) = \E \bigg[e^{- \skalar{x,X}} \bigg]$ becomes
\begin{equation}\label{eq:STLT} \ST^n \LTa(x) = \E \bigg[ \prod_{\abs{v}=n} \LTa( \mL(v)^\top x) \bigg], \qquad x \in \Rdnn. \end{equation}

\subsection{Assumptions}
As noted before, we assume
\begin{equation}\tag{A1}\label{A1} \text{the r.v. $N := \# \{ i \, : \, \mT_i \neq 0 \}$ equals $\sup\{ i \, : \, \mT_i \neq 0 \} $ and is finite a.s.} \end{equation}
and
\begin{equation*}
N \ge 1 \text{ a.s. and } 1 < \E N < \infty.
\end{equation*}
 This assumption guarantees, that the underlying Galton-Watson tree (consisting of the nodes $v$ with $\mL(v) \neq 0$) is supercritical and allows to define a probability measure $\mu$ on $\Mset$ by
\begin{equation}\label{eq:defmu}
\int \, f(\ma) \, \mu(d\ma) ~:=~ \frac{1}{\E N}\Erw{\sum_{i=1}^N f(\mT_i) }.
\end{equation}
On the (support of the) measure $\mu$, we will impose the following condition $\condC$: 
\begin{defin}\label{defn:condc}
A subsemigroup $\Gamma \subset \Mset$ satisfies condition $\condC$, if 
\begin{enumerate}
\item every $\ma$ in $\Gamma$ is {\em allowable}, i.e. it has no zero row nor column, and
\item $\Gamma$ contains a matrix with all entries  positive $(>0)$.
\end{enumerate} 
\end{defin}
For the measure $\mu$ as defined in Eq. \eqref{eq:defmu}, we assume
\begin{equation}
\tag{A2} \label{A3} \text{ The subsemigroup $[\supp \mu]$ generated by $\supp \mu$ satisfies $\condC$.}
\end{equation}
Note that if $\ma \in \Mset$ is an allowable matrix, then we can define its action on $\Sp := \Sd \cap \Rdnn$ by
$$ \ma \as u ~:=~ \frac{\ma u}{\abs{\ma u}}, \qquad u \in \Sp.$$
Furthermore, we need a multivariate analogue of a non-lattice condition: Recall that a matrix $\ma$ with all entries positive has a algebraic simple dominant eigenvalue $\lambda_\ma >0$ with corresponding normalized eigenvector $v_\ma$ the entries of which are all positive.
\begin{equation}
\tag{A3} \label{A4} \text{The additive group generated by $\{ \log \lambda_{\ma} \, : \, \ma \in [\supp \, \mu] $ has all entries positive$\}$ is dense in $\R$}
\end{equation} 

Let $\mM, (\mM_n)_{n \in \N}$ be i.i.d. random matrices with law $\mu$, and write $\mPi_n := \prod_{i=1}^n \mM_n$. Then it is shown in \cite{Mentemeier2013}, that the multivariate analogue of the function $m$ is given by
$$ m(s) := \E[N] \lim_{n \to \infty} \left( \E \norm{\mPi_n}^s \right)^{1/n},$$
which is finite on
$$ I_\mu := \{ s > 0 \, : \, \E\bigg[ \norm{\mM}^s  \bigg]< \infty \}.$$
On $I_\mu$, it is log-convex, and thus the left-handed derivatives $m'(s^-)$ exist.

We assume to be in the critical case, i.e. 
\begin{equation}
\label{A5}\tag{A4} \text{there is $\alpha \in (0,1] \cap I_{\mu}$ with $m(\alpha)=1$ and $m'(\alpha^-)=0$.} 
\end{equation}

For the multivariate case, the classical \emph{T-log T} condition splits into an upper bound
and a lower bound: Introducing $\iota(\ma) := \inf_{u \in \Sp}\abs{\ma u}$, we observe that $\iota(\ma) >0$ for $\ma \in \Mset$, and that for all $u \in \Sp$,
$$ \iota(\ma) ~ \le ~ \abs{\ma u} ~\le ~ \norm{\ma}.$$ Note that if $\ma$ is invertible, then $\norm{\ma^{-1}}^{-1} \le \iota(\ma)$.
\begin{equation}
 \tag{A5}\label{A6}
 \E \bigg[ \norm{\mM}^\alpha \log(1+ \norm{\mM}) \bigg]< \infty, \qquad \E \bigg[ (1+\norm{\mM})^\alpha \abs{\log \iota(\mM^\top)} \bigg] < \infty \\
\end{equation}
Sometimes we will impose the stronger condition
\begin{equation} \label{A7} \tag{A6}
\text{There is $c >0$ such that $\P{\iota(\mM^\top) \ge c}=1$,}
\end{equation}
which together with the first part of \eqref{A6} implies the second part of \eqref{A6}.

\medskip

In the second part of the paper, we will need stronger assumptions on $\mu$, which guarantee that the associated Markov random walk (to be defined below) is Harris recurrent. We will consider the absolute continuity assumption
\begin{equation}\label{A4c}\tag{A3c}
\exists\, \mao \in \interior{\Mset} \, \exists\, \gamma_0, c >0 \text{ s.t. } \P{\mM \in \cdot  } ~\ge~ \gamma_0 \, \llam^{d \times d}(\cdot \cap B_c(\mao)),
\end{equation}
where $\llam^{d\times d}$ denotes the Lebesgue measure on the set of $d \times d$-matrices, seen as a subset of $\R^{d^2}$. A similar assumption for invertible matrices appears in \cite[Theorem 6]{Kesten1973} and subsequently in \cite{AM2010}. It is easy to check that \eqref{A4c} implies \eqref{A4}.

We will consider as well a quite degenerate case, namely
\begin{equation}
\label{A4f}\tag{A3f}  \text{$\supp \mu$ is finite and consists of rank-one matrices, and \eqref{A4} holds.}
\end{equation} 
Note that an allowable rank-one matrix $\ma$ has all entries positive, the columns are multiples of a vector $v_\ma \in \interior{\Sp}$, and consequently,  $\ma \as u = v_\ma$ for all $u \in \Sp$.

We will also impose a stronger moment condition, namely
\begin{equation}\label{A8}\tag{A7}
\E\bigg[ N^{p_0}+{\left(\sum_{i=1}^N\norm{\mT_i}\right)}^{p_1}\bigg]<\8 \qquad\text{for some } p_0,p_1\ge1\text{ such that }p_0+p_1>2.
\end{equation}

Note that \eqref{A8} implies  
\begin{equation}
 \label{eq:moments_assumption}
 \E\bigg[\left(\sum_{i=1}^N\norm{\mT_i}^{\alpha/(1+\delta)}\right)^{1+\delta}\bigg]<\8 \text { for small enough }\delta>0. 
\end{equation}
Indeed, for any $0<s<1$ and $p$ such that $1/p=(1-s)/p_0+s/p_1$, using first Jensen's and then H\"{o}lder's inequality, the random variable $\sum_i^N\norm{\mT_i}^s$ has  finite  moment of the order $p$:
\begin{align*}
 \E\bigg[\left(\sum_i^N\norm{\mT_i}^s\right)^{p}\bigg]\le\E\bigg[N^{(1-s)p} \left(\sum_i^N\norm{\mT_i}\right)^{sp}\bigg]
 \le(\E N^{p_0})^{\frac{(1-s)p}{p_0}}\left(\E\bigg[\bigg(\sum_i^N\norm{\mT_i}\bigg)^{p_1}\bigg]\right)^{\frac{sp}{p_1}}<\8.
\end{align*}

\subsection{Previous Results}

We have the following existence result in the critical case.

\begin{prop} \label{prop:existence}
Assume \eqref{A1} --  \eqref{A3} and \eqref{A5} -- \eqref{A6}. Then Eq. \eqref{eq:FPE} has a nontrivial fixed point. 
\end{prop}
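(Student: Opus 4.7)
The plan is to adapt the Biggins--Kyprianou construction for the one-dimensional critical smoothing transform to the multivariate setting, using the Perron--Frobenius theory for nonnegative random matrices from \cite{Mentemeier2013}. First, condition $\condC$ from \eqref{A3} together with $m(\alpha)=1$ in \eqref{A5} yields, via a Krein--Rutman/Perron--Frobenius argument, a unique (up to scaling) strictly positive continuous eigenfunction $r_\alpha : \Sp \to (0,\infty)$ of the kernel
\[
 P^\alpha f(u) \;=\; \E\Bigl[\sum_{i=1}^N |\mT_i^\top u|^\alpha\, f(\mT_i^\top \as u)\Bigr],
\]
normalized so that $P^\alpha r_\alpha = r_\alpha$. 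This produces the standard nonnegative $(\B_n)$-martingale
\[
 W_n(u) \;=\; \frac{1}{r_\alpha(u)} \sum_{|v|=n} |\mL(v)^\top u|^\alpha\, r_\alpha(\mL(v)^\top \as u), \qquad n \ge 0.
\]

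Second, the criticality $m'(\alpha^-)=0$ in \eqref{A5} forces $W_n(u) \to 0$ almost surely, so the natural object is the \emph{multivariate derivative martingale}
\[
 D_n(u) \;=\; \sum_{|v|=n}\! \bigl(-\log|\mL(v)^\top u| + c(\mL(v)^\top \as u) - c(u)\bigr)\, |\mL(v)^\top u|^\alpha\, r_\alpha(\mL(v)^\top \as u),
\]
where the corrector $c : \Sp \to \R$ is chosen so that, after the Doob $h$-transform by $r_\alpha$, the increments of the associated spinal Markov random walk on $\Sp \times \R$ are centered. Using the two-sided $L\log L$-condition \eqref{A6} on $\mM$ and $\mM^\top$ --- which controls both $\log\norm{\mM}$ and $\log \iota(\mM^\top)$ --- a Biggins--Kyprianou truncation argument delivers $D_n(u) \to D_\infty(u) \in [0,\infty)$ a.s.\ with $\P{D_\infty(u)>0}>0$. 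Compactness of $\Sp$ together with strict positivity and continuity of $r_\alpha$, both secured by \eqref{A3}, allow the scalar spinal estimates to be carried out uniformly in $u$.

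Third, a nontrivial fixed point is produced by setting
\[
 \LTa(x) \;=\; \E\Bigl[\exp\bigl(-|x|^\alpha\, r_\alpha(x/|x|)\, D_\infty(x/|x|)\bigr)\Bigr], \qquad x \in \Rdnn \setminus \{0\},
\]
extended by $1$ at the origin. The one-step branching identity
\[
 D_\infty(u) \;=\; \sum_{i=1}^N \frac{|\mT_i^\top u|^\alpha\, r_\alpha(\mT_i^\top \as u)}{r_\alpha(u)}\, D_\infty^{(i)}(\mT_i^\top \as u),
\]
with $(D_\infty^{(i)})_{i \ge 1}$ i.i.d.\ copies attached to the children of the root and independent of $(\mT_i)_{i \ge 1}$, translates directly into $\LTa(x) = \E\bigl[\prod_{i=1}^N \LTa(\mT_i^\top x)\bigr]$, i.e.\ the fixed-point equation in Laplace-transform form \eqref{eq:STLT} for $n=1$. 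L\'evy's continuity theorem then identifies $\LTa$ as the Laplace transform of a probability measure on $\Rdnn$, whose associated random vector $X$ satisfies \eqref{eq:FPE} and is nontrivial precisely because $D_\infty \not\equiv 0$.

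\textbf{Main obstacle.} The principal difficulty is proving $\P{D_\infty(u)>0}>0$, and hence nontriviality of $X$. In the scalar case, criticality gives a genuinely centered random walk on the spine whose recurrence underpins the Biggins--Kyprianou truncation; in the multivariate case the spinal increments depend on an $\Sp$-valued Markov chain, so one must identify the correct centering corrector $c$ and then extract enough ergodicity of this tilted chain to reproduce the one-dimensional arguments. Condition \eqref{A3} and the spectral theory from \cite{Mentemeier2013} furnish the needed regularity of $r_\alpha$, while \eqref{A6} supplies the uniform moment control. A secondary technical point is to verify that the candidate $\LTa$ above is completely monotone in each coordinate and thus genuinely the Laplace transform of a $[0,\infty)^d$-valued random vector, which can be handled by expressing $D_\infty$ as an a.s.\ limit of explicit nonnegative quantities and passing to the limit in \eqref{eq:STLT} iteratively.
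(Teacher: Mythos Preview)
The paper does not prove this proposition at all: it simply cites \cite[Theorem~1.2]{Mentemeier2013}. So strictly speaking there is no ``paper's own proof'' to compare against, only the external reference.

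Your proposed route --- build the multivariate derivative martingale $D_n(u)$, prove it converges to a nontrivial limit $D_\infty(u)$ under \eqref{A1}--\eqref{A6}, and read off a fixed point from its Laplace transform --- is natural, but it runs counter to the logical flow of the present paper and contains a real gap. In this paper the implication goes the \emph{other} way: existence of a fixed point is taken as input (via the citation), and only then, under the substantially stronger hypotheses \eqref{A7}, \eqref{A8} together with \eqref{A4c} or \eqref{A4f}, is the derivative martingale $\mathcal{W}_n(u)$ shown to converge to a nontrivial limit --- and even that is done indirectly, by identifying $\mathcal{W}(u)$ with a scalar multiple of the disintegration $Z(u)$ of the already-known fixed point (Section~\ref{sect:derivative martingale}). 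The remark after Lemma~\ref{lem:LMt} is explicit that avoiding assumptions like \eqref{A7} in the critical case ``seems to need a variant of Kesten's renewal theorem for driftless Markov random walks, or a strong theory of Wiener--Hopf factorization,'' which is not available.

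Concretely, the step ``a Biggins--Kyprianou truncation argument delivers $D_n(u)\to D_\infty(u)$ a.s.\ with $\P{D_\infty(u)>0}>0$'' under only \eqref{A1}--\eqref{A6} is exactly the point you flag as the main obstacle, and it is not handled by what you write. The scalar Biggins--Kyprianou machinery relies on recurrence and fluctuation estimates for a genuinely centered random walk on the spine; here the spine is a Markov random walk driven by an $\Sp$-valued chain, and the ergodic/recurrence input you would need (Harris recurrence, regeneration, minorization) is precisely what the paper obtains only under \eqref{A4c} or \eqref{A4f}. Without those, you have neither a direct argument for $\P{D_\infty(u)>0}>0$ nor the renewal-theoretic substitutes. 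A secondary but nontrivial issue is that your candidate $\LTa(x)=\E\bigl[\exp(-|x|^\alpha r_\alpha(x/|x|)\,D_\infty(x/|x|))\bigr]$ is not obviously a multivariate Laplace transform; in the paper this is circumvented because $\LTfp$ arises as a limit of bona fide Laplace transforms, whereas your direct construction would need the integral representation \eqref{eq:estnus} for $\est[\alpha]$ together with a randomized-stable argument, which you do not spell out.
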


\begin{proof}[Source:] 
Theorem 1.2 in \cite{Mentemeier2013}.
\end{proof}

The main contribution of this paper is to prove the uniqueness of this fixed point, and to give asymptotic properties of its Laplace transform. It is convenient to introduce polar coordinates $(r,u) \in [0, \infty) \times \Sp$ on $\Rdnn$. Moreover, we will use that for $s \in I_{\mu}$, the operators $\Ps$ and $\Pst$, being self-mappings of the set $\Cf{\Sp}$ of continuous functions on $\Sp$ and defined by
$$ \Ps f(u) := \Erw{\abs{\mM u}^s \, f(\mM \as u)}, \qquad \Pst f(u) := \Erw{\abs{\mM^\top u}^s f(\mM^\top \as u)},$$
are quasi-compact with spectral radius equal to $k(s):=(\E\, N)^{-1} m(s)$ and there is a unique positive continuous functions $\est \in \Cf{\Sp}$ and unique probability measures $\nus, \nust \in \Pset(\Sp)$ such that
\begin{align}\label{eq:eigenfunctions}
\Pst \est = \frac{m(s)}{\E N} \est, \qquad \nus \Ps = \frac{m(s)}{\E N} \nus, \qquad \nust \Pst = \frac{m(s)}{\E N} \nust
\end{align}
and the following relation holds:
\begin{equation}
\label{eq:estnus} \est(u) ~=~ \int_{\Sp}  \skalar{u,y}^s \, \nus(dy) \qquad \text{ for all $u \in \Sp$.}
\end{equation}
See \cite{BDGM2014} for details and proofs. Using Eq. \eqref{eq:estnus}, we can extend $\est$ to a $s$-homogeneous function on $\Rdnn$, i.e. 
$$ \est(x) ~:=~ \int_{\Sp}  \skalar{x,y}^s \, \nus(dy) ~=~ \abs{x}^s H^s\bigg( \frac{x}{\abs{x}} \bigg), \qquad x \in \Rdnn. $$
Using $\est$, we are now going to provide a many-to-one lemma.

Let $u \in \Sp$. Define for $v \in \V$
$$ S^u(v) := -\log \abs{\mL(v)^\top u}, \qquad U^u(v) := \mL(v)^\top \as u.$$ Then, by Eq. \eqref{eq:eigenfunctions},
we see that 
\begin{align} \overline{\Qs}f(u,t) ~:=&~ \frac{1}{\est(u)\, m(s)} \E \left[ \sum_{i=1}^N \, f(U^u(i), t-S^u(i)) e^{-s S^u(i)} \est(U^u(i)) \right] \label{def:Q}\\
=&~ \frac{1}{\est(u)\, m(s)} \E N \, \E \Bigl[{f(\mM^\top \as u, t+ \log \abs{\mM^\top u}) \, \abs{\mM^\top u}^\alpha \est(\mM^\top \as u)}\Bigr] \nonumber
\end{align}
defines a Markov transition operator on $\Sp \times \R$. Let $(U_n, S_n)_n$ be a Markov chain in $\Sp \times \R$ with transition operator $\overline{\Qs[\alpha]}$ and denote the probability measure on the path space $(\Sp \times \R)^\N$ with initial values $(U_0,S_0)=(u,s)$ by $\Prob_{u,s}^\alpha$ and the corresponding expectation symbol by $\E_{u,s}^\alpha$. Most times, we will use the shorthand notations $\Prob_u^\alpha = \Prob_{u,0}^\alpha$ and $\Prob_{\eta}^\alpha = \int \, \Prob_{u,s}^\alpha \, \eta(du,ds)$ for a probability measure $\eta$ on $\Sp \times \R$.  

\begin{prop}\label{prop:many to one} 
For all $s \in I_\mu$, $u \in \Sp$, $n \in \N$ and measurable $f : (\Sp \times \R)^{n+1} \to \R$,
\begin{equation*}\label{eq:many to one}
\frac{1}{\est[s](u)\, m(s)^n} \, \E \left[ \sum_{\abs{v}=n} \, f\Bigl( (U^u\bigl(v|k \bigr), S^u\bigl(v|k \bigr))_{k \le n}\Bigr) \, e^{-s S^u(v)} \est(U^u(v)) \right]
=~ \E_u^s  f(U_0, S_0, \cdots, U_n, S_n).
\end{equation*}
\end{prop}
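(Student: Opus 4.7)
The plan is induction on $n$, resting on two ingredients: the multiplicative cocycle structure of $(\mL(v))_{v \in \V}$ along the tree, and the fact that $\overline{\Qs}$ is a bona fide Markov transition operator on $\Sp \times \R$. The latter is immediate from the eigenfunction identity $\Pst\est = (m(s)/\E N)\,\est$ of \eqref{eq:eigenfunctions}, which is exactly equivalent to $\overline{\Qs}$ preserving the constant function $1$; this is what makes the normalising factor $\est(u)\,m(s)^n$ come out right at every step.

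For $n = 0$ the claim is immediate: only $v = \emptyset$ contributes on the left-hand side, $\mL(\emptyset) = \Id$ forces $U^u(\emptyset) = u$ and $S^u(\emptyset) = 0$, and both sides reduce to $f(u, 0)$. For the inductive step I decompose each $v$ of length $n$ as $v = iw$ with $|w| = n - 1$ a word in the subtree rooted at $i$. Writing $\mathbf{L}^{(i)}(\cdot)$ for the weight process built from the family $(\mT(iv))_{v \in \V}$, the identity $\mL(iw) = \mT_i(\emptyset)\mathbf{L}^{(i)}(w)$ yields, with $u_1 := U^u(i)$, the multiplicative cocycle
$$ |\mL(iw)^\top u| = |\mT_i^\top u|\cdot |\mathbf{L}^{(i)}(w)^\top u_1|, $$
and hence $U^u(iw|k) = U^{u_1}_{(i)}(w|(k-1))$ and $S^u(iw|k) = S^u(i) + S^{u_1}_{(i)}(w|(k-1))$ for $k \ge 1$. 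In particular the tilting weight factorises along the root-to-leaf path as
$$ e^{-sS^u(iw)}\est(U^u(iw)) = e^{-sS^u(i)}\cdot e^{-sS^{u_1}_{(i)}(w)}\est(U^{u_1}_{(i)}(w)). $$

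Conditioning on $\mathcal{B}_1$, the $N$ subtrees are conditionally independent copies of the original weighted branching process, so for each $i$ the induction hypothesis applied inside the subtree with initial direction $u_1$ converts the inner sum over $|w| = n-1$ into $\est(u_1)\,m(s)^{n-1}\,g\bigl(U^u(i), S^u(i)\bigr)$, where $g : \Sp \times \R \to \R$ is the $(n-1)$-step Markov expectation absorbing the two already-fixed initial pairs of $f$ together with the additive shift by $S^u(i)$ in the remaining $S$-coordinates of $f$. Collecting, the left-hand side of the proposition reduces to
$$ \frac{1}{\est(u)\,m(s)}\,\E\left[\sum_{i=1}^N e^{-sS^u(i)}\est(U^u(i))\,g\bigl(U^u(i), S^u(i)\bigr)\right], $$
which by the defining formula \eqref{def:Q} equals $\overline{\Qs}g(u, 0)$. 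The Markov property of $(U_k, S_k)$ under $\Prob_u^s$ together with time-homogeneity then identifies this with $\E_u^s f(U_0, S_0, \ldots, U_n, S_n)$, closing the induction.

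The only real obstacle is notational: one has to construct the shifted function $g$ carefully so that the induction hypothesis applies inside each subtree and, simultaneously, so that the sign and shift convention of the Markov increment encoded in \eqref{def:Q} -- where the step from $(u, t)$ reads $t \mapsto t - S^u(i) = t + \log|\mT_i^\top u|$ -- is respected throughout. Beyond this bookkeeping, no further analytical input is needed.
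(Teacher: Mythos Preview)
Your induction argument is correct and is the standard route to many-to-one identities of this type; the branching decomposition $v=iw$, the cocycle factorisation of $(U^u,S^u)$ along the spine, and the use of the eigenfunction identity \eqref{eq:eigenfunctions} to renormalise at each step are exactly the ingredients needed. The paper does not give its own proof here but simply cites Corollary~4.3 of \cite{Mentemeier2013}, so there is nothing to compare against beyond noting that your argument presumably reconstructs what is done there. Your remark about the sign convention in \eqref{def:Q} is apt: as written, the transition reads $t\mapsto t-S^u(i)$, whereas the proposition requires $S_1=S^u(i)$ under $\Prob_u^s$, so either the sign in \eqref{def:Q} or the statement carries a typo; your proof goes through once this is reconciled.
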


\begin{proof}[Source:] Corollary 4.3 in \cite{Mentemeier2013}. \end{proof}

We call $(U_n, S_n)_{n \in \N}$ 
the {\em associated Markov random walk}. It generalizes the concept of the associated random walk in \cite{DL1983,Liu1998}. In particular, it holds for all $u \in \Sp$, that
$$ \lim_{n \to \infty} \frac{S_n}{n} ~=~ 0 \qquad \Prob_u^\alpha\text{-a.s.}, $$
see \cite[Theorem 6.1]{BDGM2014}. Moreover, it is shown in \cite[Lemma 7.1]{Buraczewski2014} that
$$ b(u):= \lim_{n \to \infty} \E_u^\alpha S_n$$
is well defined and continuous, and satisfies
\begin{equation}\label{eq:b} \E_u^\alpha [S_1 { + } b(U_1)] = b(u) . \end{equation}

Using Eq. \eqref{eq:b}, we obtain that
$$ \mathcal{W}_n(u) := \sum_{\abs{v}=n} \left[ S^u(v) { + } b(U^u(v)) \right] \, \est[\alpha](U(v)) e^{-\alpha S(v)} $$
defines a martingale with respect to the filtration $\B_n$, which we will show to be the multivariate analogue of the derivative martingale. In fact, $b$ can be considered as the derivative of $\est[\alpha]$, see \cite[(7.9)]{Buraczewski2014}.

\subsection{Main Results} 
Our first result proves that, upon imposing the non-lattice condition \eqref{A3} and the stronger moment reap. boundedness assumptions \eqref{A7}--\eqref{A8}, the fixed point given by Proposition \ref{prop:existence} is unique up to scaling, and satisfies an multivariate analogue of the regular variation property \eqref{eq:slowvar1}.

\begin{thm}\label{thm:main}
Assume \eqref{A1} -- \eqref{A8} . Then there is a random measurable function $Z : \Sp  \to [0, \infty)$ with $\P{Z(u)>0}=1$ for all $u \in \Sp$, such that $X$  is a nontrivial fixed point of \eqref{eq:FPE}
on $\Rdnn$ if and only if its Laplace transform satisfies
\begin{equation}\label{LTofFP} \LTfp(ru) ~:=~ \Erw{e^{-r \skalar{u,X}}} ~=~ \Erw{e^{-r^\alpha K Z(u)}} \qquad \forall u \in \Sp,\, r \in \Rnn \end{equation}
for some $K > 0$.

There is an essentially unique positive function $L$, slowly varying at $0$ with $\liminf_{r \to 0} L(r) = \infty$, such that 
\begin{equation} \label{eq:regvar}\lim_{r \to 0} \frac{1-\LTfp(ru)}{L(r) \, r^\alpha} ~=~ K \est[\alpha](u). \end{equation}
\end{thm}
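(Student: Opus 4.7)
The plan is to follow the Biggins--Kyprianou scheme for the critical branching random walk \cite{Biggins2004,Kyprianou1998}, transporting it to the matrix setting via Proposition~\ref{prop:many to one}. Given a nontrivial fixed point $X$ with Laplace transform $\phi$, I would introduce the normalized defect
$$ F(u,t) ~:=~ \frac{1-\phi(e^{-t}u)}{e^{-\alpha t}\,\est[\alpha](u)}, \qquad u \in \Sp, \ t \in \R.$$
Iterating the fixed point equation \eqref{eq:STLT}, substituting $\mL(v)^\top e^{-t}u = e^{-(t+S^u(v))}U^u(v)$, and using the elementary bounds $\sum_v(1-a_v) - \sum_{v \neq w}(1-a_v)(1-a_w) \le 1 - \prod_v a_v \le \sum_v(1-a_v)$ for $a_v \in [0,1]$, combined with Proposition~\ref{prop:many to one} at $s=\alpha$ (where $m(\alpha)=1$), yields the two-sided comparison
$$ \E^\alpha_u F(U_n, t+S_n) - R_n(u,t) \ \le\ F(u,t) \ \le\ \E^\alpha_u F(U_n, t+S_n), $$
in which the error $R_n$ is controlled by the refined moment condition \eqref{eq:moments_assumption} that is itself a consequence of \eqref{A8}.

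Since $m'(\alpha^-)=0$, the walk $S_n$ has zero drift under $\Prob^\alpha_u$, so the additive martingale $W_n(u) = \sum_{|v|=n}\est[\alpha](U^u(v))e^{-\alpha S^u(v)}$ degenerates and must be replaced by the derivative martingale $\mathcal{W}_n(u)$. A central step is to prove $\mathcal{W}_n(u) \to Z(u)$ $\Prob$-a.s.\ with $\P{Z(u)>0}=1$: I would follow the strategy of A\"id\'ekon--Shi \cite{Aidekon2013}, performing the spine change of measure induced by $W_n$, identifying the spine as the associated Markov random walk $(U_n, S_n)$ enriched with the extra offspring structure, and using the Harris recurrence of $(U_n, S_n)$ on $\Sp \times \R$ granted by \eqref{A4c} or \eqref{A4f}, together with the stronger moment bound \eqref{A8}, to verify the Lyons--Peyri\`ere type integrability needed for a.s.\ convergence to a nontrivial limit. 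With $Z$ in hand, iterating the two-sided comparison along $n \to \infty$ and substituting $r = e^{-t}$ delivers the essentially unique slowly varying $L$ with $\liminf_{r \to 0}L(r) = \infty$ and the constant $K \ge 0$ of \eqref{eq:regvar}; the factorization $K\est[\alpha](u)$ is forced because $\est[\alpha]$ is the only (up to scaling) positive $\overline{\Qs[\alpha]}$-harmonic function on $\Sp$ (see \eqref{eq:eigenfunctions}--\eqref{eq:estnus}), while the growth of $L$ at $0$ reflects the critical-case logarithmic boundary layer of the random walk $t + S_n$.

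For the parametrization \eqref{LTofFP}, I would compare the given $\phi$ with the candidate $\phi_Z(ru) := \E\exp(-r^\alpha K Z(u))$: the distributional fixed-point equation inherited by $Z$ from the martingale property of $\mathcal{W}_n$ shows that $\phi_Z$ is itself a solution of the Laplace-transform equation \eqref{eq:STLT}, and it shares with $\phi$ the asymptotic \eqref{eq:regvar} with the same $K$ and $L$. A standard bootstrapping then feeds $\phi - \phi_Z = o(r^\alpha L(r))$ back into the functional equation and, using the contraction of $\ST$ on this higher-order remainder (propagated through the chain $(U_n, t+S_n)$), forces $\phi = \phi_Z$. I expect the main obstacle to be precisely the martingale convergence $\mathcal{W}_n \to Z$: proving a.s.\ non-degeneracy in the matrix-valued setting demands joint control of the spatial component $S_n$ and the angular component $U_n$ under the size-biased spine measure, which is exactly where the extra assumptions \eqref{A4c} or \eqref{A4f} and \eqref{A8} become indispensable---the remainder of the argument could in principle be carried out under \eqref{A1}--\eqref{A6} alone.
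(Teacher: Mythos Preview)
Your plan has a structural problem: it inverts the logical order of the paper and, as a result, invokes assumptions that Theorem~\ref{thm:main} does not make. You propose to define $Z(u)$ as the a.s.\ limit of the derivative martingale $\mathcal{W}_n(u)$ and to establish this convergence via a spine argument that, by your own admission, needs the Harris recurrence granted by \eqref{A4c} or \eqref{A4f}. But Theorem~\ref{thm:main} assumes only \eqref{A1}--\eqref{A8}; the density/finite-rank conditions are reserved for Theorem~\ref{thm:main2}. So your construction of $Z$ is not available here.

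The paper sidesteps this by defining $Z$ differently. Given a nontrivial fixed point $\psi$ (existence is Proposition~\ref{prop:existence}), it sets $M_n(x)=\prod_{|v|=n}\psi(\mL(v)^\top x)$, a bounded $[0,1]$-valued martingale, and takes $Z(x):=-\log M(x)$ where $M$ is the a.s.\ limit (Proposition~\ref{prop:dis}). No spine argument is needed; positivity $\P{Z(u)>0}=1$ comes from the fact that any nontrivial fixed point has no atom at $0$. The regular variation \eqref{eq:regvar} is then proved \emph{directly} from $\psi$ by the Kyprianou scheme (Lemmas~\ref{lem:subsequence}--\ref{lem:slowvar}): subsequential compactness of $h_t(u,s)=D(u,s+t)/D(u_0,t)$ via complete monotonicity, superharmonicity of any limit for $(U_n,S_n)$, and constancy by the recurrence of the driftless Markov random walk under \eqref{A4}. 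Uniqueness is obtained in Lemma~\ref{lem:LMt} by passing to the homogeneous stopping line $\slineu[t]$ and using the bounded-increment assumption \eqref{A7} to get
\[
Z(u)=\lim_{t\to\infty} L(e^{-t})\sum_{v\in\slineu[t]}\est[\alpha](U^u(v))e^{-\alpha S^u(v)},
\]
from which $Z_2(u)/Z(u)=\lim L_2/L=K$ for any second fixed point. Only \emph{afterwards}, in Sections~\ref{sect:L}--\ref{sect:derivative martingale} and under \eqref{A4c}/\eqref{A4f}, is $L$ identified as the logarithm and $Z$ shown to coincide with the derivative martingale limit.

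Your bootstrapping proposal for uniqueness (feed $\phi-\phi_Z=o(r^\alpha L(r))$ back into the equation) is also not how the paper proceeds and would require a quantitative rate in \eqref{eq:regvar} that is not available at this stage; the stopping-line comparison above is what actually closes the argument.
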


\begin{rem*} Essentially unique means that if $L_1$ and $L_2$ satisfy Eq. \eqref{eq:regvar}, then $\lim_{r \to 0} L_1(r)/L_2(r)=1$.  Depending on the value of $\alpha$, additional information can be extracted from Eq. \eqref{eq:regvar}.
\begin{enumerate}
\item If $\alpha <1$, then a Tauberian theorem (see \cite[XIII.(5.22)]{Feller1971}) together with \cite[Theorem 1.1]{BDM2002} implies the following multivariate regular variation property
$$ \lim_{r \to \infty} \frac{\P{\abs{X} > sr, \ \frac{X}{\abs{X}} \in \cdot}}{\P{\abs{X} > r}} = s^{-\alpha} \nus[\alpha],$$
see \cite[Section 6]{Mentemeier2013} for details.
\item If $\alpha=1$, then $\E \abs{X}=\infty$ for every non-trivial fixed point, see Lemma \ref{lem:Linfty}. Moreover, the aperiodicity condition \eqref{A4} is not needed, see Remark \ref{rem:aperiodic}. This is in analogy with the one-dimensional situation, see e.g.~\cite[Corollary 1.5]{Liu1998}.
\end{enumerate}
\end{rem*}

Upon imposing the additional assumptions \eqref{A4c} or \eqref{A4f} on $\mu$, we will identify the function $L$ as well as the random variable $Z$.

\begin{thm}\label{thm:main2}
Assume \eqref{A1} --  { \eqref{A8}}, with \eqref{A4c}  or \eqref{A4f} instead of \eqref{A4}. 
Then $\mathcal{W}_n(u)$ converges a.s. to a nonnegative limit $\mathcal{W}(u)$ with $\P{\mathcal{W}(u)>0}=1$, and a random variable $X \in \Rdnn$ is a nontrivial fixed point of \eqref{eq:FPE} if and only if for some $K>0$,
$$\Erw{e^{-r \skalar{u,X}}} ~=~ \Erw{e^{-r^\alpha K \mathcal{W}(u)}}  \qquad \forall u \in \Sp,\, r \in \Rnn.$$
Moreover, the slowly varying function $L$ in Eq. \eqref{eq:regvar} can be chosen as (a scalar multiple of) $L(r) = \abs{\log r} \vee 1$.
\end{thm}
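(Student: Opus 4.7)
The strategy has three stages: (i) establish a.s.~convergence of the multivariate derivative martingale $\mathcal{W}_n(u)$ to a nonnegative limit $\mathcal{W}(u)$ with $\P{\mathcal{W}(u)>0}=1$; (ii) verify that $\LTfp(ru):=\E\bigl[e^{-Kr^\alpha \mathcal{W}(u)}\bigr]$ is the Laplace transform of a nontrivial fixed point of~\eqref{eq:FPE}, so that by Theorem~\ref{thm:main} the random field $Z(u)$ coincides with $\mathcal{W}(u)$ up to a positive scalar; (iii) derive the one-sided tail asymptotic $\P{\mathcal{W}(u)>t}\sim CH^\alpha(u)/t$ and convert it, via a Tauberian computation, into $L(r)\asymp |\log r|\vee 1$.

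For stage~(i) I follow the Biggins--Kyprianou truncation scheme. For $\beta>0$ set
$$\Theta^\beta(v):=\inf\bigl\{k\le |v| \,:\, S^u(v|k)+b(U^u(v|k))<-\beta\bigr\},\quad \inf\emptyset:=\infty,$$
and
$$\mathcal{W}^\beta_n(u):=\sum_{|v|=n}\bigl(S^u(v)+b(U^u(v))+\beta\bigr)\est[\alpha](U^u(v))e^{-\alpha S^u(v)}\,\1[\Theta^\beta(v)>n].$$
Equation~\eqref{eq:b} combined with Proposition~\ref{prop:many to one} shows that $\mathcal{W}^\beta_n$ is a non-negative $(\B_n)$-supermartingale, hence converges a.s. The one-particle spine change of measure from Prop.~\ref{prop:many to one} reduces $L^1$-boundedness of $(\mathcal{W}^\beta_n)$ to bounding $\E^\alpha_u\bigl[(S_n+b(U_n)+\beta)\1[\tau^\beta>n]\bigr]$, where $\tau^\beta:=\inf\{k:S_k+b(U_k)<-\beta\}$, and this bound follows from optional stopping because $(S_n+b(U_n)-b(u))_{n\ge 0}$ is a $\Prob^\alpha_u$-martingale by~\eqref{eq:b}. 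A spine-decomposition argument in the style of~\cite{Biggins2004,Kyprianou1998} then upgrades a.s.~convergence to $L^1$-convergence of $\mathcal{W}^\beta_n$. Writing $\mathcal{W}_n=\mathcal{W}^\beta_n-\beta W^\alpha_n$ on $\bigcap_{|v|=n}\{\Theta^\beta(v)>n\}$, where $W^\alpha_n:=\sum_{|v|=n}\est[\alpha](U^u(v))e^{-\alpha S^u(v)}$ is the critical additive martingale (which tends to $0$ a.s.~by the standard Lyons--Peres argument applied to the MRW), and then letting $\beta\to\infty$ gives $\mathcal{W}_n(u)\to\mathcal{W}(u)$ a.s. The role of~\eqref{A4c}/\eqref{A4f} is essential here: Harris recurrence of $(U_n)_n$ on $\Sp$ together with the CLT for the centred MRW yields topological recurrence of $(U_n,S_n+b(U_n))$ on $\Sp\times\R$, so $\Prob^\alpha_u(\tau^\beta=\infty)\to 1$ as $\beta\to\infty$. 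Positivity $\P{\mathcal{W}(u)>0}=1$ is then a standard 0--1 argument: $q(u):=\Prob(\mathcal{W}(u)=0)$ satisfies $q(u)=\E\prod_{i=1}^N q(\mT_i^\top\as u)$, non-triviality of the $L^1$-limit of $\mathcal{W}^\beta$ rules out $q\equiv 1$, and Harris recurrence propagates $q\equiv 0$ by continuity.

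For stage~(ii), splitting the first generation in $\mathcal{W}_{n+1}(u)$ and telescoping the $b$-corrections through~\eqref{eq:b} produces the distributional recursion $\mathcal{W}(u)\eqdist \sum_{i=1}^N\abs{\mT_i^\top u}^\alpha\mathcal{W}^{(i)}(\mT_i^\top \as u)$, with $(\mathcal{W}^{(i)})_i$ i.i.d.~copies of $\mathcal{W}$ independent of $(\mT_i)$; consequently $\LTfp(ru):=\E[e^{-Kr^\alpha \mathcal{W}(u)}]$ satisfies $\LTfp=\ST\LTfp$. Theorem~\ref{thm:main} then identifies this Laplace transform with the one generated by $Z(u)$, so $Z(u)\eqdist K'\mathcal{W}(u)$ for some $K'>0$, and~\eqref{LTofFP} becomes the asserted form after absorbing $K'$ into $K$. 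For stage~(iii), I would derive $\P{\mathcal{W}(u)>t}\sim CH^\alpha(u)/t$ as $t\to\infty$ by adapting the one-dimensional critical-case arguments of~\cite{Liu1998,Buraczewski2009}: feed the smoothing recursion into an implicit renewal theorem for the Markov random walk $(U_n,S_n)$, whose recurrence comes from~\eqref{A4c}/\eqref{A4f} and whose moment regularity from~\eqref{A8}. An elementary Abelian calculation then yields $1-\LTfp(ru)\sim \alpha CK H^\alpha(u)\,r^\alpha|\log r|$ as $r\to 0$, and comparison with Eq.~\eqref{eq:regvar} identifies $L(r)=|\log r|\vee 1$.

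\noindent\textbf{Main obstacle.} The hardest step is the combined $L^1$-convergence of $\mathcal{W}^\beta_n$ together with the passage $\beta\to\infty$ in stage~(i); both rely on sharp recurrence properties of the spine $(U_n,S_n)$ on $\Sp\times\R$, and it is precisely for this reason that~\eqref{A4} is strengthened to \eqref{A4c} or~\eqref{A4f}. A second, closely related technical hurdle is the precise one-large-jump tail $\P{\mathcal{W}(u)>t}\sim CH^\alpha(u)/t$ in stage~(iii), which requires transplanting the implicit-renewal machinery to the Markov-modulated setting of~\cite{BDGM2014} and keeping track of how the spine direction $U_n$ contributes the factor $H^\alpha(u)$.
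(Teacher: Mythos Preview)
Your route differs substantially from the paper's, and in one place the logic is inverted in a way that creates a real difficulty. The paper does \emph{not} first prove convergence of $\mathcal{W}_n(u)$ by truncation and spine methods; it first identifies the slowly varying function by working directly with the Laplace transform of the (already existing, by Theorem~\ref{thm:main}) fixed point. Concretely, it sets $D(u,t)=(1-\LTfp(e^{-t}u))/(e^{-\alpha t}\est[\alpha](u))$, derives the Markov renewal equation $D(u,t)=\E^\alpha_u D(U_1,t+S_1)-G(u,t)$, and uses the regeneration structure supplied by \eqref{A4c}/\eqref{A4f} (Lemma~\ref{regenerationlemma}) to collapse this to a one-dimensional driftless renewal equation $\hat D(t)=\E^\alpha_\eta \hat D(t+V_1)-g(t)$; a Durrett--Liggett argument then gives $\hat D(t)/t\to K'\in(0,\infty)$, i.e.\ $L(r)\sim K'|\log r|$. \emph{Only then} does the paper turn to $\mathcal{W}_n$: with $L$ identified, the uniform asymptotic $(1-\LTfp(ru))/(K'r^\alpha|\log r|\,\est[\alpha](u))\to 1$, combined with Proposition~\ref{prop:dis}\eqref{as2} and $\min_{|v|=n}S^u(v)\to\infty$ (Lemma~\ref{lem:maxpos}), yields $\mathcal{W}_n(u)\to K'Z(u)$ a.s.\ in three lines, and $\P{\mathcal{W}(u)>0}=1$ is inherited from $\P{Z(u)>0}=1$. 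No truncated martingales $\mathcal{W}^\beta_n$, no spine $L^1$-bounds, and no $0$--$1$ argument are needed.

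Your stages~(i)--(ii) are a legitimate alternative for the convergence part---this is the Biggins--Kyprianou/A\"\i d\'ekon scheme, and with enough care it can be carried through in the Markov-modulated setting---but stage~(iii) has a genuine gap. You propose to obtain $\P{\mathcal{W}(u)>t}\sim CH^\alpha(u)/t$ by ``feeding the smoothing recursion into an implicit renewal theorem''. Goldie/Kesten implicit renewal, however, produces pure power tails only when the tilted walk has \emph{negative} drift, i.e.\ $m'(\alpha)<0$; in the critical case $m'(\alpha)=0$ the walk is centred and that machinery does not deliver the $1/t$ tail. In dimension one the precise tail of the derivative-martingale limit is established \emph{after} identifying $L$ via the Durrett--Liggett renewal equation, not before, and the references you cite (\cite{Liu1998,Buraczewski2009}) concern tails of the fixed point $X$ in boundary regimes, not the critical-case tail of the martingale limit. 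So your stage~(iii) reverses the natural---and the paper's---direction of implication: the renewal equation for $D$ is what pins down $L(r)=|\log r|\vee 1$, and the behaviour of $\mathcal{W}$ is read off from that.
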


\subsection{Structure of the Paper}

The further organization is as follows: In Section \ref{sect:preliminaries}, we study the associated Markov random walk, which is recurrent due to the criticality assumption. Under assumptions \eqref{A4c}, a regeneration property known from the theory of Harris recurrent Markov chains will be shown to hold.  In Section \ref{sect:regular variation}, we prove that each fixed point satisfies \eqref{eq:regvar}, which is a main ingredient in the proof of uniqueness in Section \ref{sect:uniqueness}. 
In Section \ref{sect:L}, we turn to the proof of Theorem \ref{thm:main2} and study the behavior of the Laplace transform of the fixed point. We conclude with Section \ref{sect:derivative martingale}, where the convergence of the derivative martingale is proved.

\subsection*{Acknowledgements}
The main part of this work was done during mutual visits to the Universities of Muenster and Warsaw, to which we are grateful for hospitality. S.M.~was partlially supported by the Deutsche Forschungsgemeinschaft (SFB 878). K.K.~was partially supported by NCN grant DEC-2012/05/B/ST1/00692. 

\section{The Associated Markov Random Walk}\label{sect:preliminaries}

In this section, we provide additional information about the associated Markov random walk, in particular about its stationary distribution and recurrence properties. Moreover, we show that it is Harris recurrent and satisfies a minorization condition under the additional assumption \eqref{A4c}.

\subsection{The Associated Markov Random Walk}

The Markov chain $(U_n, S_n)_n$ constitutes a Markov random walk, i.e. for each $n \in \N$, the increment $S_n - S_{n-1}$ depends on the past only through $U_{n-1}$, this follows from the definition of $\overline{\Qs[\alpha]}$. 
Such Markov random walks which are generated by the action of nonegative matrices where first studied by Kesten in his seminal paper \cite{Kesten1973}, and very detailed results are given in \cite{BDGM2014}. For the reader's convenience, we cite those who are important for what follows. Recall that we denoted the Perron-Frobenius eigenvalue and the corresponding normalized eigenvector of a matrix $\ma \in \interior{\Mset}$ by $\lambda_\ma$ resp. $v_\ma$.

\begin{prop}\label{prop:UnSn}
Assume \eqref{A1} -- \eqref{A3} and let $\alpha \in I_\mu$ ($m(\alpha)=1$ is not needed here). For this $\alpha$, assume \eqref{A6}.
Then the following holds:
\begin{enumerate}
\item The Markov chain $(U_n)_n$ on $\Sp$ has a unique stationary distribution $\pist[\alpha]$ under $\Prob_u^\alpha$, with density (proportional to) $\est[\alpha]$ w.r.t~the measure $\nust[\alpha]$. \label{Un ergodic}
\item $\supp \pist[\alpha] = \overline{\{ v_\ma \, : \, \ma \in [\supp \mu] \cap \interior{\Mset} \}}.$
\item For all $u \in \Sp$, \label{SLLN}
$$ \lim_{n \to \infty} \frac{S_n}{n} =  \E_{\pist[\alpha]}^\alpha S_1 = \int_{\Sp}\, \E_u^\alpha S_1 \, \pist[\alpha](du) = \frac{m'(\alpha^{-})}{m(\alpha)} \qquad \Prob_u^\alpha\text{-a.s.}$$
\end{enumerate}
Now assume $1 \in I_\mu$ and that \eqref{A6} holds for $\alpha=1$. Then $\mb :=\E \mM \in \interior{\Mset}$.
\begin{enumerate}
\item[(4)] $m(1) = (\E N) \lambda_{\mb}$ and $\est[1](u) = \skalar{u,v_{\mb}}$.
\item[(5)]  The derivative of $m$ at 1 can be calculated to $$m'(1^{-})= \int_{\Sp} \,  \Erw{\frac{\skalar{\mM u,v_{\mb}}}{\skalar{u,v_{\mb}}} \, \log \abs{\mM u} } \, \pist[1](du)$$ 
\end{enumerate}
\end{prop}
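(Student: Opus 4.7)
My plan is to derive all five statements from the spectral theory of the operators $\Pst[s]$ developed in \cite{BDGM2014}, combined with Birkhoff's ergodic theorem. All parts of the proposition are essentially bookkeeping on top of the known quasi-compactness and Perron-Frobenius structure of $\Pst[s]$ on $\Cf{\Sp}$.

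For parts (1) and (2), I would first observe that the $\Sp$-marginal of the transition kernel \eqref{def:Q} of $(U_n)$ under $\Prob^\alpha$ is the Doob $h$-transform
\[
Q^\alpha f(u) ~=~ \frac{\E N}{\est[\alpha](u)\, m(\alpha)}\, \Pst[\alpha]\bigl(\est[\alpha] f\bigr)(u)
\]
of $\Pst[\alpha]$ with respect to the positive eigenfunction $\est[\alpha]$. Since $\nust[\alpha]\Pst[\alpha] = (m(\alpha)/\E N)\nust[\alpha]$ by \eqref{eq:eigenfunctions}, a direct computation shows that the probability measure with density proportional to $\est[\alpha]$ relative to $\nust[\alpha]$ is $Q^\alpha$-invariant. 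Uniqueness (and aperiodicity) follow from quasi-compactness of $\Pst[\alpha]$ on $\Cf{\Sp}$ together with algebraic simplicity of its dominant eigenvalue $m(\alpha)/\E N$, established in \cite{BDGM2014} under \eqref{A1}--\eqref{A3}. For (2), I would invoke the classical identification (going back to Hennion) of the minimal closed set invariant under the action $\ma \as u$ of matrices from $[\supp \mu]$ as the closure of the Perron-Frobenius directions of interior matrices.

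Part (3) combines ergodicity of $(U_n)$ from (1), yielding via Birkhoff's theorem $S_n/n \to \int \E_u^\alpha S_1 \, \pist[\alpha](du)$ $\Prob_u^\alpha$-a.s., with a calculation that identifies the limit as $m'(\alpha^-)/m(\alpha)$. I would apply the many-to-one formula (Proposition \ref{prop:many to one}) with $n=1$ to express $\E_u^\alpha S_1$ as a weighted expectation of $\log\abs{\mM^\top u}$ against $\est[\alpha]$; integrating against $\pist[\alpha] \propto \est[\alpha]\nust[\alpha]$, the $\est[\alpha]$ factors recombine and the result is recognized as the logarithmic (left) derivative at $s=\alpha$ of the Perron-Frobenius eigenvalue $k(s) = m(s)/\E N$ of $\Pst[s]$, obtained by (one-sided) differentiation of the eigen-relation $\Pst[s]\est[s] = k(s)\est[s]$ in $s$ and pairing with $\nust[s]$. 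Integrability of $\log\abs{\mM^\top u}$ against the relevant weights is precisely what \eqref{A6} provides.

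For parts (4) and (5), the assumption $1 \in I_\mu$ makes $\mb = \E\mM$ finite, and condition \condC\ forces $\mb \in \interior{\Mset}$. A direct computation gives
\[
\Pst[1]\bigl(\skalar{\cdot,v_{\mb}}\bigr)(u) ~=~ \E\skalar{\mM^\top u, v_{\mb}} ~=~ \skalar{u, \mb v_{\mb}} ~=~ \lambda_{\mb}\,\skalar{u,v_{\mb}},
\]
so by uniqueness of the positive dominant eigenfunction, $\est[1](u) = \skalar{u,v_{\mb}}$ and $m(1) = (\E N)\lambda_{\mb}$. Part (5) then follows by specializing the drift formula derived in (3) to $\alpha = 1$, using the identity $\abs{\ma^\top u}\,\est[1](\ma^\top \as u) = \skalar{u, \ma v_{\mb}}$ to rewrite the integrand in the stated form. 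The main technical obstacle is the drift identification in (3): since \eqref{A5} only provides a left derivative of $m$ at $s=\alpha$, the differentiation under the integral must be justified via log-convexity of $m$ on $I_\mu$ and the moment bound \eqref{A6}; all of this is carried out in detail in \cite[Theorem 6.1]{BDGM2014}, to which I would ultimately appeal.
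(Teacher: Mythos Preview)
Your proposal is correct and aligns with the paper's own treatment, which simply cites Sections 4 and 6 of \cite{BDGM2014} as the source without further argument. You have essentially unpacked what that citation contains---the Doob $h$-transform structure, the ergodic theorem, and the differentiation of the Perron--Frobenius eigenvalue---and you likewise defer the technical drift identification to \cite[Theorem 6.1]{BDGM2014}, so there is no substantive difference in approach.
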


\begin{proof}[Source: ] Sections 4 and 6 of \cite{BDGM2014}.\end{proof}

%

\subsection{Recurrence of Markov Random Walks}

By Proposition \ref{prop:UnSn} \eqref{SLLN}, in the critical case $m'(\alpha^{-})=0$ the Markov random walk $(S_n)_n$ is centered in the stationary regime and satisfies a strong law of large numbers. Alsmeyer \cite{Alsmeyer2001} studied recurrence properties of such Markov random walks, which we will make use of. 
%
%

\begin{lem}\label{lem:recurrence}
Assume that \eqref{A1}-\eqref{A6} hold. For any open set $A$ with $\pist[\alpha](A) > 0$ and any open interval $B \subset \R$, it holds that 
\begin{equation}\label{recurrence}\Prob_{\pist[\alpha]}^\alpha ({(U_n, S_n) \in A \times B \text{ infinitely often}})=1. \end{equation}
If the aperiodicity condition \eqref{A4} is not assumed, then still
\begin{equation} \label{eq:oscillates} \liminf_{n\to \infty}{S_n} ~=~ - \infty, \qquad \limsup_{n \to \infty}{S_n} ~=~ \infty \qquad \Prob_{\pist[\alpha]}^\alpha\text{-a.s.} \end{equation}
\end{lem}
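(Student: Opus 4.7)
The plan is to reduce both claims to general recurrence theorems for Markov random walks due to Alsmeyer \cite{Alsmeyer2001}, whose hypotheses we can verify from the standing assumptions.

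First, I would assemble the pieces we already have on the driving chain and its increments. The modulating chain $(U_n)_n$ lives on the compact space $\Sp$, has a unique stationary distribution $\pist[\alpha]$ by Proposition \ref{prop:UnSn}(1), and its transition operator $\overline{\Qs[\alpha]}$ is Feller and quasi-compact via the spectral theory of $\Ps[\alpha]$ developed in \cite{BDGM2014}. The criticality assumption \eqref{A5} together with Proposition \ref{prop:UnSn}(3) delivers the stationary centering
$$\E^\alpha_{\pist[\alpha]} S_1 = \frac{m'(\alpha^-)}{m(\alpha)} = 0.$$
Integrability $\E^\alpha_{\pist[\alpha]} |S_1| < \infty$ follows from \eqref{A6} and the many-to-one formula (Proposition \ref{prop:many to one}): we have $|S_1| = |\log |\mM^\top u|| \le \log(1+\norm{\mM}) + |\log \iota(\mM^\top)|$, and both summands integrate against the change-of-measure density $|\mM^\top u|^\alpha \est[\alpha](\mM^\top \as u)/\est[\alpha](u)$ because $\est[\alpha]$ is bounded and continuous on $\Sp$ and $|\mM^\top u| \le \norm{\mM}$.

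For \eqref{recurrence}, I would next use \eqref{A4} to establish non-arithmeticity of the stationary increment law. Any matrix $\ma \in [\supp \mu] \cap \interior{\Mset}$ has all entries positive, so $\ma^\top \as u$ stays in a compact subset of $\interior{\Sp}$ uniformly in $u$, and $-\log |\ma^\top u| = -\log \lambda_\ma + O(1)$. Hence the additive group generated by the support of the stationary increments of $(S_n)_n$ contains the dense subgroup provided by \eqref{A4}, so $(S_n)_n$ is non-arithmetic. The Markov renewal/recurrence theorem of \cite{Alsmeyer2001} then applies to the centered, integrable, non-arithmetic Feller Markov random walk $(U_n, S_n)_n$, and yields that each open box $A \times B$ with $\pist[\alpha](A) > 0$ is visited infinitely often $\Prob^\alpha_{\pist[\alpha]}$-a.s.

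The oscillation statement \eqref{eq:oscillates} is strictly weaker since it does not need non-arithmeticity: centering and integrability of the increments alone, combined with the ergodicity of $(U_n)_n$, suffice through a Chung--Fuchs-type dichotomy from \cite{Alsmeyer2001}, which rules out transience and forces $\liminf_n S_n = -\infty$ and $\limsup_n S_n = +\infty$ a.s. \emph{The main obstacle} I anticipate is verifying the precise hypotheses of Alsmeyer's recurrence theorem, specifically the form of Harris recurrence or minorization required for $(U_n)_n$, at a stage where \eqref{A4c} is not yet assumed. This should be handled by exploiting the compactness of $\Sp$ together with the strict contraction of the projective action $\ma \as \cdot$ on $\interior{\Sp}$ guaranteed by \eqref{A3}, which yields uniform ergodicity in total variation on $\supp \pist[\alpha]$ and places us within the scope of Alsmeyer's framework on the support of the stationary law.
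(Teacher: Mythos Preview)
Your high-level plan---centering via Proposition~\ref{prop:UnSn}(3), integrability via \eqref{A6}, then Alsmeyer's recurrence theory---matches the paper's. The oscillation statement \eqref{eq:oscillates} is fine either way: once the increments are centered and integrable, Alsmeyer's dichotomy (or the fact that the recurrence set is a closed subgroup of $\R$) rules out transience.

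The gap is in your derivation of \eqref{recurrence}. Alsmeyer's Theorem~2 in \cite{Alsmeyer2001} gives that the \emph{additive component} $S_n$ has recurrence set equal to $\R$; it does not by itself assert that the pair $(U_n,S_n)$ visits every box $A\times B$ with $\pist[\alpha](A)>0$ infinitely often. Your sentence ``yields that each open box $A\times B$ \ldots\ is visited infinitely often'' is precisely where the argument breaks. The paper handles this by a reduction you are missing: first use the strong law of large numbers for ergodic Markov chains (Breiman \cite{Breiman1960}) to show $U_n\in A$ infinitely often, then pass to the \emph{induced} Markov random walk $(U_{\tau_n},S_{\tau_n})$ along successive hitting times $\tau_n$ of $A$. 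One checks that $S_{\tau_n}/n\to 0$ a.s.\ (via $n/\tau_n\to\pist[\alpha](A)$ and $S_n/n\to 0$) and that non-arithmeticity is inherited by the induced walk (Lemma~1 of \cite{Alsmeyer2001}). Now Alsmeyer's Theorem~2 applied to the induced walk gives $S_{\tau_n}\in B$ infinitely often, which is exactly $(U_n,S_n)\in A\times B$ infinitely often.

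Two smaller points. Your non-arithmeticity argument via ``$-\log|\ma^\top u|=-\log\lambda_\ma+O(1)$'' is not a proof: the $O(1)$ term depends on $u$ and on $\ma$, so density of $\{\log\lambda_\ma\}$ does not transfer to the increment support without a careful cycle argument; the paper simply cites \cite{Buraczewski2014} for the implication \eqref{A4} $\Rightarrow$ non-arithmetic in Alsmeyer's sense. And the ``main obstacle'' you anticipate---verifying Harris recurrence or a minorization for $(U_n)_n$---is a red herring here: no such structure is needed, because the SLLN for $(U_n)_n$ under its unique stationary law already supplies the visits to $A$, after which the problem is purely one-dimensional.
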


\begin{proof}
Let $A$ be any open set $A$ with $\pist[\alpha](A) > 0$. By the strong law of large numbers for Markov chains (see \cite{Breiman1960}), 
\begin{equation}\label{SLLNmarkov} \lim_{n \to \infty} \frac{1}n \sum_{k=1}^n f(U_k) ~=~ \int\, f(x) \, \pist[\alpha](dx) \qquad \Prob_{\pist[\alpha]}^\alpha\text{-a.s.}, \end{equation}
thus, using $f=\1[A]$, we obtain that $\Prob_{\pist[\alpha]}^\alpha(U_n \in A \text{ infinitely often})=1 $.
Denote the successive hitting times of $A$ by $\tau_n$. Then $(U_{\tau_n}, S_{\tau_n})$ is again a Markov random walk, and $\pi_A :=\pist[\alpha](\cdot \cap A)/\pist[\alpha](A)$ is the stationary probability measure for $U_{\tau_n}$. The aperiodicity assumption \eqref{A4}  implies that $(U_n,S_n)$ are nonarithmetic in the sense of \cite{Alsmeyer2001}, see \cite{Buraczewski2014} for details. Lemma 1 in  \cite{Alsmeyer2001} gives that $(U_{\tau_n}, S_{\tau_n})$ is nonarithmetic as well. 
Using \eqref{SLLNmarkov} with $f=\1[A]$ again, this gives that $n/\tau_n \to \pi(A)$ a.s. Combining this with the strong law of large numbers  \eqref{SLLN} in Proposition \ref{prop:UnSn}, 
we deduce that
$$ \lim_{n \to \infty} \frac{S_{\tau_n}}{n} =  \lim_{n \to \infty} \frac{S_{\tau_n}}{\tau_n} \, \frac{\tau_n}{n} =  \frac{1}{\pist[\alpha](A)} \cdot 0 \qquad \Prob_{\pist[\alpha]}^\alpha\text{-a.s.}.$$

Then Theorem 2 in  \cite{Alsmeyer2001} (for the nonarithmetic case) gives that the recurrence set 
$$ \{ s \in \R \, : \, \text{ for all $\epsilon > 0$, }S_{\tau_n} \in (s-\epsilon, s+\epsilon) \text{ infinitely often } \}$$
 equal to $\R$, which shows that $\Prob_{\pist[\alpha]}^\alpha({S_{\tau_n} \in B \text{ infinitely often}})=1$.  
 
In the arithmetic case, the recurrence set is still a closed subgroup of $\R$, which implies the oscillation property.
\end{proof}

\begin{cor}\label{cor:uo}
There is $u_0 \in \interior{\Sp} \cap (\supp \pist[\alpha])$ such that
\begin{equation}\label{recurrence2}\Prob_{u_0}^\alpha ({(U_n, S_n) \in A \times B \text{ infinitely often}})=1, \text{ and } \end{equation}
\begin{equation} \label{eq:oscillates2} \liminf_{n\to \infty}{S_n} ~=~ - \infty, \qquad \limsup_{n \to \infty}{S_n} ~=~ \infty \qquad \Prob_{u_0}^\alpha\text{-a.s.} \end{equation}
\end{cor}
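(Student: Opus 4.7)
The plan is to transfer the $\Prob_{\pist[\alpha]}^\alpha$-a.s.\ statements of Lemma \ref{lem:recurrence} to a fixed starting point via a Fubini argument, using the decomposition $\Prob_{\pist[\alpha]}^\alpha = \int \Prob_u^\alpha\, \pist[\alpha](du)$. First I would fix a countable basis $\{A_n\}_{n\in\N}$ of the topology of $\Sp$ consisting of open sets with $\pist[\alpha](A_n)>0$ (e.g., rational open balls intersected with $\Sp$, keeping those with positive mass), and a countable basis $\{B_m\}_{m\in\N}$ of open intervals in $\R$ with rational endpoints. For each pair $(n,m)$, Lemma \ref{lem:recurrence} yields that the event $\{(U_k,S_k)\in A_n\times B_m \text{ i.o.}\}$ has full probability under $\Prob_{\pist[\alpha]}^\alpha$, so by Fubini the set
\[
E_{n,m}:=\{u\in\Sp\,:\,\Prob_u^\alpha((U_k,S_k)\in A_n\times B_m \text{ i.o.})=1\}
\]
has full $\pist[\alpha]$-measure. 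The same applies to the oscillation event \eqref{eq:oscillates}, yielding a full-measure set $F$. Setting $G:=F\cap\bigcap_{n,m}E_{n,m}$, the countable intersection $G$ still has $\pist[\alpha](G)=1$.

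Next I would locate a suitable $u_0\in G\cap\interior{\Sp}\cap\supp\pist[\alpha]$. By Proposition \ref{prop:UnSn}(2) together with assumption \eqref{A3}, there exists $\ma\in[\supp\mu]\cap\interior{\Mset}$, and its Perron-Frobenius eigenvector $v_\ma$ lies in $\interior{\Sp}\cap\supp\pist[\alpha]$. Since $\interior{\Sp}$ is an open neighborhood of the support point $v_\ma$, it has positive $\pist[\alpha]$-mass. Because $G$ has full measure, $G\cap\interior{\Sp}$ has positive $\pist[\alpha]$-mass as well, and in particular intersects $\supp\pist[\alpha]$. Any $u_0$ in this intersection satisfies both \eqref{eq:oscillates2} and the recurrence property for every basis pair $(A_n,B_m)$.

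Finally, I would verify that the recurrence property for basis pairs implies it for arbitrary open $A$ with $\pist[\alpha](A)>0$ and arbitrary open intervals $B$. Since $\pist[\alpha](A)>0$, the set $A$ must meet $\supp\pist[\alpha]$, say at a point $w$, and every sufficiently small open neighborhood of $w$ contained in $A$ has positive $\pist[\alpha]$-measure; hence one can find a basis element $A_n\subset A$ with $\pist[\alpha](A_n)>0$. Similarly, any open interval $B$ contains some basis interval $B_m$. The inclusion $\{(U_k,S_k)\in A_n\times B_m\text{ i.o.}\}\subset\{(U_k,S_k)\in A\times B\text{ i.o.}\}$ then upgrades the $\Prob_{u_0}^\alpha$-a.s.\ statement from the countable basis to all relevant $A,B$.

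I expect no serious obstacle: the only delicate point is ensuring that the full-measure set $G$ actually meets $\interior{\Sp}\cap\supp\pist[\alpha]$, which is handled by combining the explicit description of $\supp\pist[\alpha]$ from Proposition \ref{prop:UnSn}(2) with condition \eqref{A3} guaranteeing existence of a positive matrix in $[\supp\mu]$, so that $\pist[\alpha](\interior{\Sp})>0$.
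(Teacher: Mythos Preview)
Your proposal is correct and follows essentially the same approach as the paper: disintegrate $\Prob_{\pist[\alpha]}^\alpha$ via Fubini to obtain the conclusions for $\pist[\alpha]$-a.e.\ starting point, then use Proposition~\ref{prop:UnSn}(2) and condition~\eqref{A3} to see that $\pist[\alpha](\interior{\Sp})>0$ (indeed $=1$), so a suitable $u_0$ can be picked. The paper's proof is terser and leaves the uniformity over all $(A,B)$ implicit; your use of a countable basis to reduce the ``for all $A,B$'' quantifier to a countable intersection before selecting $u_0$ is the right way to make this step rigorous.
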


\begin{proof}
By Proposition \ref{prop:UnSn}, $\supp \pist[\alpha]$ consists of the (closure of the) set of normalized Perron-Frobenius eigenvectors of matrices $\ma \in [\supp \mu]$ with all entries strictly positive. By part (2) of $\condC$, this set is nonempty, hence $\interior{\Sp} \cap (\supp \pist[\alpha]) \neq \emptyset$ and even $\pist[\alpha](\interior{\Sp})=1$. On the other hand, Lemma \ref{lem:recurrence} implies validity of \eqref{recurrence2} and \eqref{eq:oscillates2} for $\pist[\alpha]$-a.e. $u \in \Sp$, hence we can find $u_0 \in \interior{\Sp}$ satisfying the assertions.
\end{proof}
 
%

\subsection{Implications of Assumptions \eqref{A4c} and \eqref{A4f}}

In this subsection, we explain how Assumptions \eqref{A4c} and \eqref{A4f} imply that the Markov chain $(U_n)_{n \in \N}$ has an atom (possibly after redefining it on an  extended probability space), which can be used to obtain a sequence $(\sigma_n)_{n \in \N}$ of {\em regeneration times} for the Markov random walk $(U_n, S_n)$, i.e.~stopping times such that$(U_{\sigma_n}, S_{\sigma_n}-S_{\sigma_{n-1}})_{n \in \N}$ becomes an i.i.d.~sequence. Namely, we are going to prove the following lemma for the Markov chain $(U_n, Y_n):=(U_n, S_{n}-S_{n-1})$. 

\begin{lem}\label{regenerationlemma}
Assume \eqref{A1}--\eqref{A3} and \eqref{A4c} or \eqref{A4f}. On a possibly enlarged probability space, one can redefine $(U_n, Y_n)_{n \geq 0}$ together with an increasing sequence $(\sigma_n)_{n \geq 0}$ of random times such that the following conditions are fulfilled under any $\Prob_u^\alpha$, $u\in \Sp$:
\begin{itemize}
\item[(R1)] There is a filtration $\mathcal{G}=(\mathcal{G}_n)_{n \geq 0}$ such that $(U_n, Y_n)_{n \geq 0}$ is Markov adapted and each $\sigma_n$ a stopping time with respect to $\mathcal{G}$, moreover, $\{\sigma_n = k\} \in \mathcal{G}_{k-1}$ for all $n,k \ge 0$. \label{R1}
\item[(R2)] The sequence $(\sigma_{n+1}-\sigma_{n})_{n\ge 1}$ is i.i.d.~ with law $\P[\eta]{{\sigma_1} \in \cdot}$ and is independent of $\sigma_{1}$.  \label{R2}
\item[(R3)] For each $k\ge 1$, $(U_{\sigma_k+n},Y_{\sigma_k+n})_{n \geq 0}$ is independent of $(U_j, Y_j)_{0 \leq j \leq \sigma_k -1}$ with  distribution $\Prob^\alpha_\eta((U_n, Y_n)_{n \geq 0} \in \cdot)$.  \label{R3} 
\item[(R4)] There is $q \in (0,1)$ and $l \in \N$ such that  $\sup_{u \in \Sp} \Prob_u^\alpha(\sigma_1 > ln) \le q^n$.
\end{itemize}
\end{lem}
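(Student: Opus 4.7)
\emph{Plan.} The strategy is to invoke the Athreya--Ney--Nummelin splitting technique. In both cases \eqref{A4c} and \eqref{A4f} one obtains an atom for the transition kernel of $(U_n)_n$---a genuine one under \eqref{A4f}, an artificial one created by splitting under \eqref{A4c}---and defines $\sigma_n$ as its successive hitting times. Since $Y_n$ depends on the past only through $U_{n-1}$, the strong Markov property at each $\sigma_n$ immediately delivers (R1)--(R3); the filtration $\mathcal{G}$ is the natural one generated by $(U_k,Y_k)$ enlarged by the Bernoulli variables used in the splitting, and the predictability requirement $\{\sigma_n=k\}\in\mathcal{G}_{k-1}$ holds because the splitting decision at step $k-1$ is drawn \emph{before} the transition to step $k$.

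\emph{Case \eqref{A4f}.} By the remark following \eqref{A4f}, every $\ma\in[\supp\mu]$ is an allowable rank-one matrix with $\ma^\top\as u = v_{\ma^\top}$ independent of $u\in\Sp$. Consequently, after a single step the chain $U_n$ lives in the finite set $E:=\{v_{\ma^\top}:\ma\in\supp\mu\}\subset\interior{\Sp}$. Condition $\condC$ on $[\supp\mu]$, together with \eqref{A4}, lets us pick $u^*\in E$ that is reached from every point of $E$ with positive probability; setting $\sigma_n$ to be the successive return times to $u^*$ makes (R1)--(R3) automatic from the strong Markov property, while (R4) follows from the standard geometric ergodicity of an irreducible aperiodic finite-state Markov chain, uniformly over starting points in $\Sp$ (using one extra step to land in $E$).

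\emph{Case \eqref{A4c}.} The core task is to establish a minorization: there exist a compact $C\subset\interior{\Sp}$ with $\pist[\alpha](C)>0$, a constant $\lambda>0$, and a probability measure $\nu$ on $\Sp$ such that $\Prob^\alpha_u(U_1\in\cdot)\ge\lambda\nu(\cdot)$ for all $u\in C$. To prove this, feed the lower bound $\Prob(\mM\in d\ma)\ge\gamma_0\1[B_c(\mao)](\ma)\,\lambda^{d\times d}(d\ma)$ from \eqref{A4c} into the definition of $\overline{Q^\alpha}$: for $u$ in a small compact neighbourhood $C$ of $v_{\mao^\top}\in\interior{\Sp}$, the smooth map $\ma\mapsto\ma^\top\as u$ from $B_c(\mao)$ into $\Sp$ is a submersion whose Jacobian in the normal directions is bounded below uniformly over $u\in C$; combined with continuity and strict positivity of $\est[\alpha]$ on $\Sp$ and with $|\ma^\top u|$ bounded away from $0$ on compacts, this produces the minorization with $\nu$ supported in a small open subset of $\interior{\Sp}$. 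I then apply the standard Athreya--Ney--Nummelin split on an enlarged probability space and declare $\sigma_n$ to be the successive epochs of successful regeneration, whence (R1)--(R3) hold by construction.

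For the geometric tail bound (R4) two ingredients are combined. First, on the compact state space $\Sp$ one has a uniform Doeblin condition on some iterate $P^{n_0}$ of the transition kernel: continuity of the kernel together with $\pist[\alpha](C)>0$ and the minorization above (or the finite-set structure under \eqref{A4f}) yields that the hitting time of $C$ has exponentially small tails uniformly in the starting point. Second, each visit to $C$ triggers a regeneration with probability $\lambda$, so the number of visits required is geometric. The main difficulty I anticipate is the Jacobian estimate underlying the minorization under \eqref{A4c}---in particular the uniformity in $u\in C$ of the submersion $\ma\mapsto\ma^\top\as u$; once that is in place, the splitting and the tail bound are textbook arguments from Harris recurrence theory.
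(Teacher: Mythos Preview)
Your overall plan---Athreya--Ney--Nummelin splitting, with the $\sigma_n$ as successive hits of a (possibly artificial) atom---is exactly the paper's approach, and your treatment of case \eqref{A4f} matches the paper's.

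There is, however, a genuine gap under \eqref{A4c}. You minorize only the $U$-transition, $\Prob_u^\alpha(U_1\in\cdot)\ge\lambda\nu(\cdot)$ for $u\in C$, and then claim that (R3) follows because ``$Y_n$ depends on the past only through $U_{n-1}$''. That dependence is precisely the problem: at a regeneration epoch the split draws $U_{\sigma_k}\sim\nu$ independently of the past, but $Y_{\sigma_k}$ is still generated by the joint kernel from $U_{\sigma_k-1}$, so $(U_{\sigma_k},Y_{\sigma_k})$ does \emph{not} have a fixed law independent of $(U_j,Y_j)_{j\le\sigma_k-1}$. The paper flags this explicitly in the remark immediately after the lemma (``A crucial point is that we also obtain the independence of $Y_{\sigma_k}$\dots''). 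What must be proved is a \emph{joint} minorization
\[
\Prob_u^\alpha(U_1\in A,\,Y_1\in B)\ \ge\ \gamma\,\eta(A\times B)\qquad\text{for all }u\in B_\delta(v_{\mao}),
\]
with $\eta$ a fixed probability measure on $\Sp\times\R$; then the Athreya--Ney construction is applied to the chain $(U_n,Y_n)$ and delivers (R3) directly.

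The paper's device for this joint minorization also sidesteps the Jacobian difficulty you anticipate. Instead of a submersion argument, it observes that for $u$ close to $v_{\mao}$ there is an orthogonal matrix $\matrix{O}_u$ close to $\Id$ with $u=\matrix{O}_u v_{\mao}$ and $B_{c/2}(\mao)\matrix{O}_u\subset B_c(\mao)$. Feeding the density lower bound from \eqref{A4c} into the integral and substituting $\ma\mapsto\ma\matrix{O}_u$---using that $\llam^{d\times d}$ is invariant under right multiplication by an orthogonal matrix---collapses the lower bound to
\[
\gamma_0\int_{B_{c/2}(\mao)}\1[A](\ma\as v_{\mao})\,\1[B](-\log|\ma v_{\mao}|)\,\llam^{d\times d}(d\ma),
\]
which no longer depends on $u$. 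This gives the joint minorization for $(U_1,Y_1)$ in one stroke, with no Jacobian estimate needed.
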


This lemma is quite immediate under condition \eqref{A4f}, for Proposition \ref{prop:UnSn}, (2) shows that the unique stationary measure $\pist[\alpha]$ for $(U_n)$ under $\Prob_u^\alpha$ is supported on the {\em finite} set $\mathds{S} :=\{ v_\ma \, : \, \ma \in \supp \mu\}$ (note that $v_{\ma \mb} = v_{\ma}$ if $\ma$ has rank one, thus the semigroup $[\supp \mu]$ can be replaced by $\supp \mu$.) Moreover, independent of the initial value $u \in \Sp$, $U_1 \in \mathds{S}$ $\Prob_u^\alpha$-f.s., i.e. $\Sp \setminus \mathds{S}$ is uniformly transient for $(U_n)_{n \in \N}$, and thus we can study $(U_n)_{n \in \N}$ on the finite state space $\mathds{S}$. Then, if $(\sigma_n)_{n \in \N}$ is a sequence of successive hitting times of a point $u_0 \in \mathds{S}$, the assertions of the lemma follow from the theory of Markov chains with finite state space.

\medskip

\begin{rem}
A crucial point is that we also obtain the independence of $Y_{\sigma_k}$ from $(U_j, Y_j)_{0 \leq j \leq \sigma_k -1}$, thereby strengthening analogous results for invertible matrices, obtained in \cite{AM2010,Mentemeier2013a}.
\end{rem}

From now on, assume \eqref{A4c}. 
We are going to prove that the chain $(U_n, Y_n)$ satisfies a minorization condition as in \cite[Definition 2.2]{Athreya1978} resp. \cite[(M)]{Num1978}. 
If $v_\mao \in \Sp$ is the Perron-Frobenius eigenvalue of the matrix $\ma_0$ from \eqref{A4}, then we have the following result:
\begin{lem}\label{lem:min1}
For each $u \in \Sp$, $\delta >0$,
$$ \Prob_u^\alpha(U_n \in B_{\delta}(v_\mao) \text{ infintely often })=1,$$
moreover, if $\tau$ denotes the first hitting time of $B_\delta(v_\mao)$, then there is $l \ge 1$ and  $q_0 \in (0,1)$ such that 
$$ \sup_{u \in \Sp} \Prob_u^\alpha(\tau > ln) \le q_0^n, $$
i.e. $\tau/l$ is stochastically bounded by a random variable with geometric distribution.
\end{lem}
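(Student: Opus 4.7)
The strategy is to establish a uniform minorization for some iterate of the transition kernel: find $k \in \N$ and $p_0 \in (0,1)$ such that
$$ \inf_{u \in \Sp} \Prob_u^\alpha\bigl(U_k \in B_{\delta}(v_\mao)\bigr) \;\geq\; p_0. $$
Granted this, both conclusions follow by the strong Markov property. For the tail bound, a recursive application of the Markov property at times $k, 2k, \ldots$ yields $\Prob_u^\alpha(\tau > nk) \leq \Prob_u^\alpha(U_k, U_{2k}, \ldots, U_{nk} \notin B_\delta(v_\mao)) \leq (1-p_0)^n$, so one takes $l := k$ and $q_0 := 1-p_0$. The infinitely-often statement is then immediate: $\tau < \infty$ a.s.\ uniformly in the starting state, so by the strong Markov property at each successive entrance to $B_\delta(v_\mao)$ the chain returns to this ball infinitely often almost surely.

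To produce the minorization I first reduce to the untilted law. By \eqref{def:Q}, the transition kernel of $(U_n)$ under $\Prob^\alpha$ differs from the pushforward of $\mu$ under $\ma \mapsto \ma^\top \cdot u$ by the factor $\E N \cdot |\mM^\top u|^\alpha \est[\alpha](\mM^\top \cdot u) / (m(\alpha) \est[\alpha](u))$. Since $\est[\alpha]$ is continuous and strictly positive on the compact sphere $\Sp$ (cf.~Proposition~\ref{prop:UnSn}), this factor is uniformly bounded below on any bounded set of matrices, in particular on $B_c(\mao)$. Hence any minorization for the untilted dynamics induced by $\mu$ translates, up to a multiplicative constant, into a minorization for $\Prob^\alpha$.

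Next I exploit \eqref{A4c}. For each $u \in \Sp$, the map $\Phi_u : \ma \mapsto \ma^\top \cdot u$ is smooth in a neighborhood of $\mao$, and its differential there is surjective onto the tangent space of $\Sp$ at $\Phi_u(\mao)$: since $u$ has at least one positive coordinate $u_{i_0}$, varying the $i_0$th row of $\ma$ independently spans $\R^d$. The lower bound $\gamma_0 \llam^{d\times d}(\cdot \cap B_c(\mao))$ on $\mu$ therefore pushes forward via $\Phi_u$ to a measure dominating a positive multiple of surface measure on an open neighborhood of $\Phi_u(\mao)$ in $\Sp$. By compactness of $\Sp$ and joint continuity of $(u,\ma) \mapsto \Phi_u(\ma)$, both the density constant and the radius of this neighborhood can be chosen independent of $u$. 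Concatenating this one-step minorization $k$ times yields a lower bound for the $k$-step kernel starting from $u$, supported on a neighborhood of $(\mao^\top)^k \cdot u$. Since $\mao$ has all entries positive, $(\mao^\top)^k \cdot u$ converges to $v_\mao$ exponentially fast, uniformly in $u \in \Sp$, so choosing $k$ large enough places this neighborhood inside $B_\delta(v_\mao)$, which completes the argument.

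The main obstacle is tracking the iteration: one must verify that the absolutely continuous components produced at each step concatenate into a uniform positive-density lower bound on the $k$-step kernel, despite the fact that $\mao^\top$ contracts $\Sp$ and the supports of the successive one-step densities shrink. Compactness of $\Sp$, continuity of the transition kernel in the starting point, and the exponential contraction of $\mao^\top$ towards $v_\mao$ are the ingredients that reconcile this; a secondary point is the uniform surjectivity of $d\Phi_u|_\mao$, which follows by compactness since $u \in \Sp$ always has a positive coordinate.
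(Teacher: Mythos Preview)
Your overall strategy---establish a uniform lower bound $\inf_{u}\Prob_u^\alpha(U_k\in B_\delta(v_\mao))\ge p_0$ and deduce both assertions via the Markov property---is correct and is exactly the skeleton of the argument the paper invokes. The difference lies in how the minorization is produced. You push the Lebesgue lower bound from \eqref{A4c} forward to a density on the sphere via the surjective differential of $\Phi_u$, and then concatenate these densities over $k$ steps; this works but forces you to track how the absolutely continuous pieces compose under iteration, which you flag as the main obstacle. The paper (following Kesten) bypasses this entirely: one only needs that matrices in a sufficiently small ball $B_{c'}(\mao)$ all act as contractions on $\Sp$ toward a fixed neighborhood of the Perron--Frobenius eigenvector, so the event $\{\mM_1,\dots,\mM_k\in B_{c'}(\mao)\}$---which has probability at least $(\gamma_0\,\llam^{d\times d}(B_{c'}(\mao)))^k>0$ by \eqref{A4c}, and translates to $\Prob^\alpha$ by the Radon--Nikodym bound you already noted---forces $U_k\in B_\delta(v_\mao)$ for $k$ large, uniformly in the starting point. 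No smoothness, no surjectivity of differentials, no density bookkeeping is needed; only continuity of the projective action and the Perron--Frobenius contraction. Your density argument is the right tool for the \emph{next} lemma (the minorization of the joint $(U,Y)$-kernel), but for the present statement the simpler contraction argument suffices.
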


\begin{proof}[Source:]
This is proved in \cite[p.218-220, proof of I.1]{Kesten1973}, the crucial point being that $v_\mao$ is a strict contraction on $\Sp$ with attractive fixed point $v_\mao$, and small perturbations of $\mao$ still attract to a neighborhood of $v_\mao$, and such matrices are realized with positive probability.
\end{proof}

\begin{lem}\label{lem:min2}
There are $\delta >0$, $\gamma >0$ and a probability measure $\eta$ on $\mathcal{R}:=B_\delta(v_\mao) \times \R$  such that for all $u \in B_\delta(v_\mao)$ and all measurable subsets $A \subset B_\delta(v_\mao)$, $B \subset \R$ 
$$ \Prob_u^\alpha(U_1 \in A, Y_1 \in B) ~\ge~ \gamma \, \eta(A \times B).$$
\end{lem}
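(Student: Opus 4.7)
My plan is to use the Lebesgue-density assumption \eqref{A4c} to identify a density for the one-step law of $(U_1, Y_1)$ near the point $(v_\mao, \log\lambda_\mao)$, and then extract a uniform-in-$u$ lower bound by joint continuity. The minorization measure $\eta$ will be (a normalization of) a fixed reference measure on $\Sp \times \R$, restricted to a small open neighborhood of this point.

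From the definition of $\overline{\Qs[\alpha]}$ in \eqref{def:Q},
$$
\Prob_u^\alpha\big((U_1, Y_1) \in C\big) ~=~ \frac{\E N}{\est[\alpha](u)\, m(\alpha)}\, \E\big[\mathds{1}_C(\mM^\top \as u, \log\abs{\mM^\top u})\, \abs{\mM^\top u}^\alpha\, \est[\alpha](\mM^\top \as u)\big].
$$
Since $\est[\alpha]$ is strictly positive and continuous on $\Sp$, and $v_\mao \in \interior{\Sp}$ is the attractive fixed point of the strict contraction $u \mapsto \mao^\top \as u$, one can pick $\delta_0 > 0$ and shrink $c$ so that, for all $u \in B_{\delta_0}(v_\mao)$ and $\ma \in B_c(\mao)$, the ratio $\abs{\ma^\top u}^\alpha \est[\alpha](\ma^\top \as u)/\est[\alpha](u)$ is bounded below by a positive constant and $\ma^\top \as u \in B_{\delta_0}(v_\mao)$. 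Combining with \eqref{A4c} yields, for some $c_1 > 0$,
$$
\Prob_u^\alpha\big((U_1, Y_1) \in C\big) ~\geq~ c_1 \int_{B_c(\mao)} \mathds{1}_C\big(\ma^\top \as u,\, \log\abs{\ma^\top u}\big)\, \llam^{d\times d}(d\ma).
$$

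The key step is to identify the density on $\Sp \times \R$ of the pushforward of $\llam^{d\times d}|_{B_c(\mao)}$ under $\Phi_u(\ma) := (\ma^\top \as u, \log\abs{\ma^\top u})$. Decompose $\Phi_u = \Psi \circ L_u$, where $L_u : \R^{d^2} \to \R^d$, $\ma \mapsto \ma^\top u$, is linear and surjective, and $\Psi : \R^d \setminus \{0\} \to \Sp \times \R$, $y \mapsto (y/\abs{y}, \log\abs{y})$, is a smooth diffeomorphism. Writing $\ma$ as its rows $(r_1, \dots, r_d)$, one has $\ma^\top u = \sum_j u_j r_j$. Since $v_\mao \in \interior{\Sp}$, after shrinking $\delta_0$ one can fix an index $i$ and $\eps > 0$ such that $u_i \geq \eps$ for all $u \in B_{\delta_0}(v_\mao)$; then for fixed $(r_j)_{j \neq i}$ the map $r_i \mapsto \ma^\top u$ is a bijection of $\R^d$ with Jacobian $u_i^d$. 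A Fubini argument therefore gives that $L_u$ pushes $\llam^{d\times d}|_{B_c(\mao)}$ to a measure on $\R^d$ with density
$$
p_u(y) ~=~ u_i^{-d}\, \llam^{d(d-1)}\!\big\{(r_j)_{j \neq i}\,:\, \ma(y, (r_j)_{j \neq i}) \in B_c(\mao)\big\},
$$
which is jointly continuous in $(u, y)$ and strictly positive at $(v_\mao, \mao^\top v_\mao)$, because the affine slice $\{\ma : L_{v_\mao}(\ma) = \mao^\top v_\mao\}$ passes through $\mao \in \interior{B_c(\mao)}$. Composing with the smooth $\Psi$ yields a jointly continuous density $\tilde p_u(w, t)$ of $(\Phi_u)_*(\llam^{d\times d}|_{B_c(\mao)})$ on $\Sp \times \R$, strictly positive at $\Psi(\mao^\top v_\mao) = (v_\mao, \log\lambda_\mao)$.

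By joint continuity there exist $\delta \in (0, \delta_0]$, an open neighborhood $O \subset B_\delta(v_\mao) \times \R$ of $(v_\mao, \log\lambda_\mao)$, and $\rho > 0$ with $\tilde p_u(w, t) \geq \rho$ for all $u \in B_\delta(v_\mao)$ and $(w, t) \in O$. Taking $\eta$ to be the normalized restriction of the reference measure (surface measure on $\Sp$ tensor Lebesgue on $\R$) to $O$, and $\gamma := c_1 \rho$ times the reference mass of $O$, completes the proof. The principal technical obstacle is the joint continuity of $\tilde p_u$ in $u$, which via the Fubini decomposition reduces to continuity of the $(d^2 - d)$-dimensional slice volumes; this is routine, its key enabler being that $\mao \in \interior{\Mset}$ lies in $\interior{B_c(\mao)}$, so the relevant slices meet $B_c(\mao)$ in sets of positive measure for all $(u, y)$ near $(v_\mao, \mao^\top v_\mao)$.
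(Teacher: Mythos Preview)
Your argument is correct, but it takes a genuinely different route from the paper. You compute the density of the one-step law $(U_1,Y_1)$ on $\Sp\times\R$ by pushing $\llam^{d\times d}|_{B_c(\mao)}$ through the factorization $\Phi_u=\Psi\circ L_u$, then appeal to joint continuity of this density in $(u,w,t)$ near $(v_\mao,v_\mao,\log\lambda_{\mao})$ to extract a uniform lower bound. The paper instead avoids computing any density at all: it uses that for each $u$ close to $v_\mao$ there is an orthogonal matrix $\matrix{O}_u$ with $\matrix{O}_u v_\mao=u$ and $\norm{\matrix{O}_u-\Id}$ small, so that $B_{c/2}(\mao)\matrix{O}_u\subset B_c(\mao)$; the change of variable $\ma\mapsto\ma\matrix{O}_u^{-1}$ (which preserves $\llam^{d\times d}$ and norms) then transforms the integral $\int_{B_{c/2}(\mao)\matrix{O}_u}\1[A](\ma\as u)\1[B](-\log\abs{\ma u})\,\llam^{d\times d}(d\ma)$ into one that depends only on $v_\mao$, not on $u$. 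This yields an explicit, $u$-independent minorization measure $\tilde\eta$ in one stroke.

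What each approach buys: the paper's orthogonal-rotation trick is slicker and gives the full pushforward of $\llam^{d\times d}|_{B_{c/2}(\mao)}$ as the minorization measure, whereas your construction restricts to a small neighborhood $O$; on the other hand, your density computation is more elementary (no clever change of variable is needed) and makes the local absolute continuity of the law of $(U_1,Y_1)$ explicit, which could be of independent use. Either route suffices for the subsequent regeneration construction.
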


\begin{proof}
We follow the approach in \cite{AM2010, Mentemeier2013a}. 

\Step[1]. Given $c >0$, $\mao \in \interior{\Mset}$, there is $\epsilon >0$ such that for all orthogonal matrices $\matrix{O}$, satisfying $\norm{\matrix{O}-\Id}< \epsilon$, $B_{c/2}(\mao)\matrix{O} \subset B_c(\mao)$. {\em Proof}: Let $\mb \in B_{c/2}{\mao}$, then, since $\matrix{O}$ is an isometry, 
$$ \norm{\mb \matrix{O}- \mao} \le \norm{\mb\matrix{O} - \mao \matrix{O}} + \norm{\mao \matrix{O} - \mao} \le \norm{\mb - \mao} - \norm{\mao} \norm{\matrix{O} - \Id} \le c/2 + \epsilon \norm{\mao}.$$
\Step[2]. For all $\epsilon >0$ there is $\delta>0$ such that for each $u \in B_\delta(v_\mao)$ there exists an orthogonal matrix $\matrix{O}_u$ with $ u= \matrix{O}_u v_\mao$ and $\norm{\matrix{O}_u-\Id} < \epsilon$. {\em Source}: \cite[Lemma 15.1]{Mentemeier2013a}.

\Step[3]. Introduce the finite measure 
$$\tilde\eta(A \times B) ~:=~  \int_{B_{c/2}(\mao)} \, \1[A](\ma \as v_\mao)\, \1[B](-\log \abs{\ma v_\mao}) \, \llam^{d \times d}(d\ma).$$
Combining Steps 1 and 2 and Assumption \eqref{A4c}, there is $\delta >0$, such that for all $u \in B_\delta(v_\mao)$ there exists an orthogonal matrix $\matrix{O}_u$ with $ u= \matrix{O}_u v_\mao$ and $B_{c/2}(\mao)\matrix{O}_u \subset B_c(\mao)$. 
Hence for all $u \in B_\delta(v_\mao)$, by Assumption \eqref{A4c} and using that $\llam^{d \times d}$ is invariant under transformations by a matrix with determinant 1 (see \cite[proof of Prop. 15.2, Step 1]{Mentemeier2013a} for more details, using the Kronecker product)
\begin{align*}
\Prob(\mM^\top \as u \in A, -\log \abs{\mM^\top u} \in B) ~\ge&~ \gamma_0 \, \int_{B_{c/2}(\mao)\matrix{O}_u} \1[A](\ma \as u)\, \1[B](- \log \abs{\ma u}) \ \llam^{d \times d}(d\ma) \\
~=&~ \gamma_0 \, \int_{B_{c/2}(\mao)} \1[A](\ma \matrix{O}_u^{-1}\as u)\, \1[B](- \log \abs{\ma \matrix{O}_u^{-1} u}) \ \llam^{d \times d}(d\ma) \\
~=&~ \gamma_0\,  \tilde{\eta}(A \times B).
\end{align*}

\Step[4]: To obtain a minorization for the shifted measure $\Prob^\alpha_u$, recall that $\est[\alpha]$ is bounded from below and above, to obtain that
\begin{align*} \Prob_u^\alpha(U_1 \in A, Y_1 \in B) ~\ge&~ \int_{A \cap{B_\delta(v_\mao)}} \int_B \frac{\est[\alpha](w)}{\est[\alpha](u)} e^{-\alpha y} \ \Prob(\mM^\top \as u \in dw, -\log \abs{\mM^\top u} \in dy) \\
\ge&~ \gamma_1\int_{A \cap{B_\delta(v_\mao)}} \int_B {\est[\alpha](w)}e^{-\alpha y} \ \tilde{\eta}(dw,  dy) ~=:~ \eta(A \times B) 
\end{align*}
Upon renormalizing $\eta$ to a probability measure, and thereby determining $\gamma$, we obtain the assertion.
\end{proof}

Now we are ready to prove Lemma \ref{regenerationlemma} under Assumption \eqref{A4c}:

\begin{proof}[Proof of Lemma \ref{regenerationlemma}] Lemmata \ref{lem:min1} and \ref{lem:min2} imply that the chain $(U_n, Y_n)_{n \geq 0}$ is  $\Bigl(\mathcal{R}, \gamma, \eta, 1\Bigr)$-recurrent in the sense of \cite[Definition 2.2]{Athreya1978}. Then the lemma follows from \cite[Lemma 3.1 and Corollary 3.4]{Athreya1978}. The {\em regeneration times} $\sigma_n$ are constructed as follows: Let $(\xi_n)_{n \ge 0}$ be a sequence of i.i.d.~ Bernoulli(1,$\gamma$) random variables, independent of $(U_n, Y_n)_{n \ge 0}$. Whenever $(U_n, Y_n)$ enters the set $\mathcal{R}$, $(U_{n+1}, Y_{n+1})$ is generated according to $\eta$ if $\xi_n=1$, and according to $(1-\gamma)^{-1}(P-\gamma\eta)$ if $\xi=0$. The total transition probability thus remains $P=\Prob_u^\alpha((U_1, Y_1) \in \cdot)$. Together with Lemma \ref{lem:min1}, this construction immediately gives that $\sigma_1$ can be bounded stochastically by a random variable with geometric distribution.
\end{proof}

\section{Regular Variation of Fixed Points}\label{sect:regular variation}

In this section, we show that every fixed point of $\ST$, the existence of which is provided by Proposition \ref{prop:existence}, satisfies the regular variation property \eqref{eq:regvar}.

Let $\LTfp$ be the Laplace transform of a fixed point of $\ST$ in the critical case $m'(\alpha)=0$.
Introduce
\begin{align}\label{defn:D} D(u,t) :=\ & \frac{1-
\LTfp(e^{-t}u)}{e^{-\a t}\est[\a](u)}, \qquad u \in \Sp,\, t \in \R.
\end{align}
Our aim is to study behavior of $D$ as $t$ goes to infinity. Let $u_0$ be given by Corollary \ref{cor:uo}.
Following the approach in \cite{Kyprianou1998}, we are going to show that
$$ h_t(u,s) := \frac{D(u,s+t)}{D(u_0,t)} = \frac{1-\LTfp(e^{-(s+t)}u)}{e^{-\alpha s}(1-\LTfp(e^{-t}u_0))} \frac{\est[\alpha](u_0)}{\est[\alpha](u)}$$
converges to $1$ as $t$ tends to infinity. This shows in particular, that $D(u_0,t)$ is slowly varying as $t \to \infty$. We then use the results of \cite{Mentemeier2013} to deduce that this already implies that $D(u,t)$ is slowly varying for all $u \in \Sp$. 


\begin{lem}\label{lem:subsequence}
For every sequence $(t_k)_{k \in \N}$, tending to infinity, there is a subsequence $(t_n)_{n \in \N}$ such that $h_{t_n}(u,s)$ converges pointwise to a continuous function $h : \Sp \times \R \to [0, \infty)$.
\end{lem}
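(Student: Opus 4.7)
The plan is a compactness argument via Arzel\`a--Ascoli: I would show that $\{h_t\}_{t \ge 0}$ is uniformly bounded and equicontinuous on compact subsets of $\Sp \times \R$. Then, from any sequence $t_k \to \infty$, pointwise boundedness allows extraction of a subsequence along which $h_{t_n}$ converges on a countable dense set, and equicontinuity upgrades this to pointwise convergence everywhere with a continuous limit $h \ge 0$.

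The workhorse is a sub-additive recursion for $D$. Applying the iterated fixed-point identity~\eqref{eq:STLT} at $x = e^{-t} u$, using the elementary bound $0 \le 1 - \prod_i a_i \le \sum_i (1 - a_i)$ for $a_i \in [0,1]$, and invoking the many-to-one identity (Proposition~\ref{prop:many to one}) with $m(\alpha) = 1$, I first obtain
$$D(u, t) \;\le\; \E_u^\alpha\bigl[ D(U_n, t + S_n) \bigr], \qquad u \in \Sp,\ t \in \R,\ n \ge 0,$$
and a standard stopping-line argument in the weighted branching process extends this inequality to stopping times $\tau$ of $(U_n, S_n)$. Two auxiliary facts will be used: (a) $D$ is jointly continuous on $\Sp \times \R$, by continuity of $\LTfp$ together with continuity and positivity of $\est[\alpha]$; and (b) the one-sided bound $D(u, t + s) \le e^{\alpha s} D(u, t)$ holds for $s \ge 0$, by monotonicity of $\LTfp$.

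For the uniform boundedness of $h_t$, I would fix $(u, s)$ in a compact set and apply the stopped recursion with $\tau$ the first hitting time of $B_\delta(u_0) \times (-s - \epsilon, -s + \epsilon)$ by $(U_n, S_n)$ under $\Prob_u^\alpha$; this $\tau$ is a.s.\ finite by Corollary~\ref{cor:uo} (after a possible initial step moving the chain from $u$ into $\supp \pist[\alpha]$). On $\{\tau < \infty\}$, $U_\tau$ is close to $u_0$ and $t + s + S_\tau$ close to $t$, so joint continuity of $D$ yields
$$D(u, t + s) \;\le\; (1 + \eta(\delta, \epsilon))\, \frac{\est[\alpha](u)}{\est[\alpha](u_0)}\, D(u_0, t)$$
for $t$ sufficiently large, with $\eta(\delta, \epsilon) \to 0$ as $\delta, \epsilon \to 0$. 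Dividing by $D(u_0, t)$ gives boundedness of $h_t(u, s)$ uniformly over $(u,s)$ in compacts for large $t$. Equicontinuity on compacts then follows from the joint continuity of $D$ combined with the uniform bound just obtained.

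The principal obstacle is that in the critical case the associated Markov random walk is only \emph{null}-recurrent, so $\tau$ has infinite expectation and $S_n$ can make arbitrarily large excursions before returning. A naive stopping-time recursion thus does not directly control $\E_u^\alpha[D(U_\tau, t + s + S_\tau)]$. I would handle this by truncating $\tau$ at a large deterministic $n$, using the basic recursion together with the $e^{\alpha s_+}$-bound to show that the tail $\{\tau > n\}$ contributes a vanishing error, and then letting $n \to \infty$ (and afterwards $\delta, \epsilon \to 0$). Once uniform boundedness is in place, the diagonal/Arzel\`a--Ascoli extraction is routine.
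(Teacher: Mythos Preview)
Your Arzel\`a--Ascoli route is different from the paper's and, as sketched, has real gaps.

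The paper bypasses boundedness and equicontinuity entirely with a Laplace-transform trick. Since $1-\LTfp$ has completely monotone derivative, the normalized function $f_t(x):=(1-\LTfp(e^{-t}x))/(1-\LTfp(e^{-t}u_0))$ is Bernstein, so $\varphi_t:=\exp(-f_t)$ is the Laplace transform of a probability measure on $\Rdnn$. Vague compactness of probability measures extracts a subsequence along which $\varphi_{t_n}\to\varphi$ pointwise; the limit $\varphi$ is automatically the Laplace transform of a sub-probability, hence continuous away from $0$, and $\varphi_t(u_0)=e^{-1}$ forces $\varphi>0$. Then $h(u,s)=e^{\alpha s}\,(-\log\varphi(e^{-s}u))\,\est[\alpha](u_0)/\est[\alpha](u)$ is continuous, and no a~priori estimates on the family $\{h_t\}$ were needed.

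Your specific argument for boundedness does not go through. The step where ``joint continuity of $D$ yields $D(U_\tau,\,t+s+S_\tau)\le(1+\eta)\,\tfrac{\est[\alpha](u)}{\est[\alpha](u_0)}\,D(u_0,t)$'' requires this comparison \emph{uniformly} in large $t$ for fixed $(\delta,\epsilon)$; joint continuity only gives it for each fixed $t$. What you are invoking is essentially the slow variation of $t\mapsto D(u_0,t)$ that the whole section is building towards, so the reasoning is circular. The truncation suffers the same defect: on $\{\tau>n\}$ the only available bound is the crude $D(U_n,(t+s)+S_n)\le C\,e^{\alpha((t+s)+S_n)}$, and after dividing by $D(u_0,t)$ you would need information about the size of $D(u_0,t)$ that is not yet available. (Uniform boundedness of $h_t$ on compacts \emph{is} obtainable elementarily---concavity of $r\mapsto1-\LTfp(ru_0)$ handles the $s$-variable, and the comparison $1-\LTfp(ru)\le(\min_i(u_0)_i)^{-1}(1-\LTfp(ru_0))$ handles the $u$-variable---so the stopping-time machinery is not the right tool here.) Finally, ``equicontinuity follows from joint continuity of $D$ combined with the uniform bound'' is not a valid inference in general; a uniform modulus of continuity for the family would be needed, and none is supplied.
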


\begin{proof}
Introduce for $t \in \R$ the  function $f_t : \Rdnn \to [0, \infty)$
$$ f_t(x):= \frac{1-\LTfp(e^{-t}x)}{1-\LTfp(e^{-t}u_0)}.$$
Since $\LTfp$ is a Laplace transform and $t$ is fixed, it follows (using the multivariate version of the Bernstein theorem, \cite[Theorem 4.2.1]{Bochner1955}), that the derivative of $f_t$ is completely monotone in the multivariate sense, and hence,
$$ \varphi_t(x):= \exp(-f_t(x))$$
is the Laplace transform of a probability measure on $\Rdnn$, due to \cite[Criterion XIII.4.2]{Feller1971}. Note $\varphi_t(0)=1$, while the limit as $\abs{x} \to \infty$ may be positive, so the corresponding probability measure might have some mass in zero.

Since the set of probability measures is vaguely compact, we deduce that for any sequence $t_k$, tending to infinity, there is a subsequence $t_n$ such that $\varphi_{t_n}$ converges pointwise to the Laplace transform $\varphi$ of a (sub-)probability measure on $\Rdnn$, which is continuous except for maybe in $0$.  Since $\varphi_{t_n}(u_0)=e^{-1} > 0$ for all $n$, it follows that $\varphi >0$ on $\Rdnn$, and hence, we obtain that
$$ \lim_{n \to \infty} f_{t_n}(x) ~=~ f(x) ~:=~ - \log \varphi(x) $$ exists for all $x \in \Rdnn$ with $f$ being continuous on $\Rdnn \setminus\{0\}$. 

This implies the pointwise convergence
$$ \lim_{n \to \infty} h_{t_n}(u,s) ~=~ h(u,s) ~:=~ \frac{f(e^{-s}u)}{e^{-\alpha s}} \, \frac{\est[\alpha](u_0)}{\est[\alpha](u)},$$
where the function $h$ is continuous on $\R \times \Sp$. 
\end{proof}

\begin{lem}\label{lem:superharmonic}
Let $t_n$ be a sequence such that $h_{t_n}$ converges to a limit $h$. Then $h$ is superharmonic for $(U_n, V_n)$ under $\Prob_u^\alpha$, i.e.
$$ h(u,s) \ge \E_u^\alpha \, h(U_1, s+S_1).$$
\end{lem}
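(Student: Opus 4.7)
The plan is to combine the fixed-point equation $\LTfp(x) = \E \prod_{i=1}^N \LTfp(\mT_i^\top x)$ with the elementary Bonferroni-type inequality $1 - \prod_i (1-\epsilon_i) \le \sum_i \epsilon_i$ (valid for $\epsilon_i \in [0,1]$), translate the resulting relation into one for $D$ via the many-to-one lemma, and then pass to the limit along $(t_n)$ using Fatou.

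More concretely, setting $x = e^{-(s+t)}u$ and $\epsilon_i = 1 - \LTfp(e^{-(s+t)}\mT_i^\top u) \in [0,1]$, the Bonferroni bound and the fixed-point equation yield
\[
1 - \LTfp(e^{-(s+t)}u) \;\le\; \E \sum_i \bigl(1 - \LTfp(e^{-(s+t)}\mT_i^\top u)\bigr).
\]
Using the polar decomposition $\mT_i^\top u = e^{-S^u(i)}\mT_i^\top \as u$, each term on the right equals $D(\mT_i^\top \as u,\, s+t+S^u(i))\, e^{-\alpha(s+t+S^u(i))}\, \est[\alpha](\mT_i^\top \as u)$. Dividing both sides by $e^{-\alpha(s+t)}\est[\alpha](u)$ and applying Proposition~\ref{prop:many to one} at $n=1$ with $m(\alpha)=1$ then gives
\[
D(u, s+t) \;\le\; \E_u^\alpha D(U_1,\, s+t+S_1).
\]
Dividing by $D(u_0, t)$ produces the prelimit inequality $h_t(u, s) \le \E_u^\alpha h_t(U_1, s+S_1)$.

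The last step is to pass $t = t_n \to \infty$. Since $h_{t_n} \to h$ pointwise by Lemma~\ref{lem:subsequence}, the left side converges to $h(u,s)$, while Fatou's lemma applied to the nonnegative integrand on the right yields $\E_u^\alpha h(U_1, s+S_1) \le \liminf_n \E_u^\alpha h_{t_n}(U_1, s+S_1)$. The hard part is extracting the superharmonic direction $h(u,s) \ge \E_u^\alpha h(U_1, s+S_1)$, which requires a reverse inequality in the limit. I would obtain this by sharpening the Bonferroni bound to the two-sided estimate $\sum_i \epsilon_i - \sum_{i<j}\epsilon_i\epsilon_j \le 1 - \prod_i(1-\epsilon_i) \le \sum_i \epsilon_i$, producing a matching lower bound for $D$ up to a second-order remainder. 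Controlling this remainder as $t_n \to \infty$ is the main obstacle; the moment condition \eqref{A8}, via \eqref{eq:moments_assumption}, provides the necessary uniform integrability to argue that the remainder is negligible in the normalization by $D(u_0, t_n)$ and that the superharmonic inequality emerges in the limit.
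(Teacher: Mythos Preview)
Your Bonferroni upper bound yields the prelimit inequality in the wrong direction: from $h_t(u,s)\le \E_u^\alpha h_t(U_1,s+S_1)$ together with Fatou you obtain only that both $h(u,s)$ and $\E_u^\alpha h(U_1,s+S_1)$ are bounded above by $\liminf_n \E_u^\alpha h_{t_n}(U_1,s+S_1)$, which says nothing about their relative order. You see this and fall back on the second-order Bonferroni lower bound, but the control of the remainder $\E\sum_{i<j}(1-\LTfp_i)(1-\LTfp_j)/D(u_0,t_n)$ is only asserted, not carried out; it would require a priori bounds on $(1-\LTfp(ru))/(1-\LTfp(ru_0))$ uniform in $r$, and you invoke the extra moment assumption \eqref{A8}, which the lemma does not actually need.

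The missing idea is to replace the Bonferroni inequality by the \emph{exact} telescoping identity
\[
1-\prod_{i=1}^N a_i \;=\; \sum_{i=1}^N (1-a_i)\,\prod_{j<i} a_j,
\]
applied with $a_i=\LTfp(e^{-(s+t)}\mT_i^\top u)$. After the same many-to-one rewriting this gives an \emph{equality}
\[
h_t(u,s)=\frac{1}{\est[\alpha](u)}\,\E\Biggl[\sum_{i=1}^N h_t\bigl(U^u(i),s+S^u(i)\bigr)\,e^{-\alpha S^u(i)}\est[\alpha](U^u(i))\prod_{j<i}\LTfp\bigl(e^{-(s+t)}\mT_j^\top u\bigr)\Biggr].
\]
Now Fatou applied to the nonnegative integrand on the right gives a lower bound; since each factor $\LTfp(e^{-(s+t)}\mT_j^\top u)\in[0,1]$ tends to $1$ as $t\to\infty$, the pointwise limit of the integrand is exactly $h(U^u(i),s+S^u(i))\,e^{-\alpha S^u(i)}\est[\alpha](U^u(i))$, and one obtains $h(u,s)\ge \E_u^\alpha h(U_1,s+S_1)$ directly, with no second-order remainder and no additional moment hypotheses. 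In short, the detour is caused by bounding the product weights $\prod_{j<i}\LTfp(\cdot)$ by $1$ too early; keeping them in place turns your inequality into an equality and makes Fatou work in the desired direction.
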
 

\begin{proof}
Using Eq. \eqref{eq:STLT} and a telescoping sum, we obtain (since $\LTfp$ is a fixed point),
\begin{align*}
D(u,s+t) &=~\frac{1- \LTfp(e^{-(s+t)}u)}{e^{-\alpha(s+t)}\est[\alpha](u)} \\
 &=~ \Erw{\frac{1- \prod_{i=1}^N \LTfp(\mT_i^\top e^{-(s+t)}u)}{e^{-\alpha(s+t)}\est[\alpha](u)}} \\
 &=~ \Erw{\sum_{i=1}^N \frac{1- \LTfp(\mT_i^\top e^{-(s+t)}u)}{e^{-\alpha(s+t) }\est[\alpha](u)} \prod_{1 \le j < i} \LTfp(\mT_i^\top e^{-(s+t)}u)} \\
\end{align*}
Now divide by $e^{\alpha t}(1-\LTfp(e^{-t}u_0)) / \est[\alpha](u_0)$ to obtain
\begin{align*}
h_{t}(u,s)=&~ \frac{\est[\alpha](u_0)}{\est[\alpha](u)} \E\Biggl(\sum_{i=1}^N \frac{1- \LTfp(e^{-S^u(i)-(s+t)}U^u(i))}{(1-\LTfp(e^{-t}u_0))\est[\alpha](U^u(i)) e^{-\alpha S^u(i)}e^{-\alpha s}}e^{-\alpha S^u(i)} \est[\alpha](U^u(i))\\& \hspace{8cm}\times\prod_{1 \le j < i} \LTfp(e^{-S^u(i)-(s+t)}U^u(i))\Biggr) \\
=&~ \frac{\est[\alpha](u_0)}{\est[\alpha](u)} \E\Biggl(\sum_{i=1}^N \  \, \frac{f_{t}\Bigl( e^{-S^u(i)-s}, U^u(i) \Bigr)}{(\est[\alpha](U^u(i)) e^{-\alpha (S^u(i)+s)}}e^{-\alpha S^u(i)} \est[\alpha](U^u(i)) \\& \hspace{8cm}\times\prod_{1 \le j < i} \LTfp(e^{-S^u(i)-(s+t)}U^u(i))\Biggr) \\
=&~ \frac{1}{\est[\alpha](u)} \E\Biggl(\sum_{i=1}^N \  \, h_t\Bigl(U^u(i), s+S^u(i) \Bigr) \, e^{-\alpha S^u(i)} \est[\alpha](U^u(i)) \prod_{1 \le j < i} \LTfp(e^{-S^u(i)-(s+t)}U^u(i))\Biggr) 
\end{align*}
Now consider the subsequential limit $t_n \to \infty$, then the LHS converges by assumption to $h$, while for the RHS, we use Fatou's lemma and observe that the product tends to $1$, so that we obtain:
\begin{align*}
h(u,s) \ge &~ \frac{1}{\est[\alpha](u)} \Erw{\sum_{i=1}^N \  \, h\Bigl(U^u(i), s+S^u(i) \Bigr) \, e^{-\alpha S^u(i)} \est[\alpha](U^u(i)) } \\
=&~ \E_u^\alpha \, h(U_1, s+S_1).
\end{align*}
\end{proof}

\begin{lem}\label{lem:limit}
The (subsequential limit) function $h$ is constant and equal to 1 on $\supp \pist[\alpha] \times \R$.
\end{lem}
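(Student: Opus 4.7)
The plan is to combine nonnegative supermartingale convergence with the recurrence of the associated Markov random walk. By Lemma~\ref{lem:superharmonic} the continuous function $h$ is superharmonic for $(U_n, S_n)$, and $h \ge 0$ since each $h_{t_n}$ is nonnegative. Consequently, for any starting point $(u,s)$, the process $M_n := h(U_n, s+S_n)$ is a nonnegative $\Prob_{u,s}^\alpha$-supermartingale, so by Doob's theorem it converges $\Prob_{u,s}^\alpha$-a.s.\ to some limit $M_\infty$.

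Fix the distinguished point $u_0 \in \interior{\Sp} \cap \supp \pist[\alpha]$ supplied by Corollary~\ref{cor:uo}. The normalization built into the definition of $h_{t_n}$ gives $h_{t_n}(u_0,0)=1$ for every $n$, hence $h(u_0,0)=1$. Now pick any target $(v,t) \in \supp \pist[\alpha] \times \R$ together with a nested basis of open neighborhoods $V_k \times W_k$ of $(v,t)$ in $\Sp \times \R$ shrinking to $\{(v,t)\}$. Since $v \in \supp \pist[\alpha]$, every $V_k$ has $\pist[\alpha](V_k) > 0$, so Corollary~\ref{cor:uo} guarantees that under $\Prob_{u_0}^\alpha$ the chain $(U_n, S_n)$ visits each $V_k \times W_k$ infinitely often almost surely. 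Setting $\tau_0 := 0$ and $\tau_k := \inf\{n > \tau_{k-1} : (U_n, S_n) \in V_k \times W_k\}$ produces an a.s.\ finite sequence of visit times with $(U_{\tau_k}, S_{\tau_k}) \to (v,t)$ $\Prob_{u_0}^\alpha$-a.s.

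Continuity of $h$ (Lemma~\ref{lem:subsequence}) yields $h(U_{\tau_k}, S_{\tau_k}) \to h(v,t)$ along this subsequence, while $M_n$ converges $\Prob_{u_0}^\alpha$-a.s.\ to $M_\infty$ along the whole sequence. Therefore $M_\infty = h(v,t)$ $\Prob_{u_0}^\alpha$-a.s. Specializing first to $(v,t) = (u_0, 0)$ identifies the deterministic value $M_\infty = h(u_0,0) = 1$, and then inserting this back for arbitrary $(v,t) \in \supp \pist[\alpha] \times \R$ forces $h(v,t) = 1$, which is the claim.

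The main technical subtlety is that the argument requires recurrence of the chain started from a \emph{specific} point $u_0$ rather than from the stationary distribution $\pist[\alpha]$, since we want to identify the deterministic limit $M_\infty$ path by path: this is exactly what Corollary~\ref{cor:uo} was set up to provide. Everything else---superharmonicity from Lemma~\ref{lem:superharmonic}, continuity of subsequential limits from Lemma~\ref{lem:subsequence}, and Doob's convergence theorem for nonnegative supermartingales---is already at our disposal, and the proof boils down to the recurrence-plus-continuity scheme just sketched.
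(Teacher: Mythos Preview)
Your proof is correct and follows essentially the same approach as the paper: both combine the nonnegative supermartingale convergence of $h(U_n,S_n)$ (from Lemma~\ref{lem:superharmonic}) with the recurrence of $(U_n,S_n)$ started at the distinguished point $u_0$ (Corollary~\ref{cor:uo}) and the continuity of $h$ (Lemma~\ref{lem:subsequence}), together with the normalization $h(u_0,0)=1$. The only cosmetic difference is that the paper phrases the key step as a contradiction (two distinct values of $h$ would force two distinct subsequential limits of the convergent supermartingale), whereas you argue directly that every $h(v,t)$ equals the a.s.\ limit $M_\infty$ and then identify $M_\infty=1$; the underlying mechanism is identical.
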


\begin{proof}
It follows from Lemma \ref{lem:superharmonic} that $h(U_n,s+ S_n)$ is a nonnegative supermartingale, which hence converges a.s. as $n \to \infty$. 
Now assume that $h(u,s) \neq h(w,t)$ for $u,w \in \supp \pist[\alpha]$ and $s,t \in \R$. Since $m'(\alpha)=0$, $(U_n, S_n)$ under $\Prob_{u_0}^\alpha$ is a recurrent Markov Random Walk by Lemma \ref{lem:recurrence}, thus it visits every neighborhood of $(u,s)$ resp. $(w,t)$ infinitely often. But then, due to the a.s. convergence of $h(U_n, s+S_n)$ and the continuity of $h$, we infer that $h$ has to be constant.
Since furthermore $h(u_0,0)=1$, the assertion follows. 
\end{proof}

\begin{rem}\label{rem:aperiodic}
Note that here (via Lemma \ref{lem:recurrence}) the aperiodicity condition enters. It is not needed if $\alpha=1$, because then $h$ itself is a multivariate Laplace transform, which is in particular monotone. Then using again the a.s.~convergence of $h(U_n, s+S_n)$ together with the fact that $S_n$ oscillates (see Eq. \eqref{eq:oscillates}) shows that $h$ has to be constant. 
\end{rem}

\begin{lem}\label{lem:slowvar}
 It holds that 
\begin{equation}\label{eq:convh} \lim_{t \to \infty} \frac{1-\LTfp(e^{-(s+t)}u)}{e^{-\alpha s}(1-\LTfp(e^{-t}u_0))} \frac{\est[\alpha](u_0)}{\est[\alpha](u)}~=~1 \qquad \forall u \in \Sp, \, s \in \R,\end{equation}
and the convergence is uniform on compact subsets of $\Sp \times \R$.
In particular, the positive function 
\begin{equation} \label{eq:defL} L(r) ~:=~ \frac{1-\LTfp(ru_0)}{r^\alpha \est[\alpha](u_0)} \qquad \bigg( =~ D(u_0, - \log r) \bigg) \end{equation}
is slowly varying at 0, and 
\begin{equation}\label{eq:slowvar} \lim_{r \to 0} \, \sup_{u \in \Sp} \abs{\frac{1-\LTfp(ru)}{L(r) \, r^\alpha} - H^\alpha(u)} ~=~ 0. \end{equation}
\end{lem}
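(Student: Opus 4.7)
Lemmata \ref{lem:subsequence}--\ref{lem:limit} show that any sequence $t_k\to\infty$ admits a subsequence $(t_n)$ along which $h_{t_n}$ converges pointwise to a continuous, nonnegative function $h$ on $\Sp\times\R$ that is superharmonic for $(U_n,S_n)$ and equal to $1$ on $\supp\pist[\alpha]\times\R$. The central additional claim is that every such subsequential limit $h$ is identically $1$ on all of $\Sp\times\R$; once this is shown, pointwise convergence $h_t(u,s)\to 1$ follows by the standard subsequence principle.

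To extend the identity $h\equiv 1$ from $\supp\pist[\alpha]\times\R$ to all of $\Sp\times\R$, I would complement the superharmonic estimate of Lemma~\ref{lem:superharmonic} by its subharmonic counterpart: applying $1-\prod_i x_i\le\sum_i(1-x_i)$ to the FPE for $\LTfp$ (in place of the telescoping identity used there) yields $h_t(u,s)\le\E_u^\alpha h_t(U_1,s+S_1)$. Passing this to the limit along $(t_n)$---which requires a uniform-in-$t$ upper bound for $h_t$ on compact subsets of $\Sp\times\R$, to be produced by a Potter-type estimate for the slowly varying quantity $D(\cdot,\cdot+t)/D(u_0,t)$ together with the boundedness of $\est[\alpha]$ on the compact set $\Sp$---shows that $h$ is in fact \emph{harmonic}. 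Hence $(h(U_n,s+S_n))_{n\ge 0}$ is a bounded nonnegative martingale under $\Prob_u^\alpha$ for every $u\in\Sp$. Since for every such $u$ the chain $(U_n,S_n)$ visits every neighborhood of every point in $\supp\pist[\alpha]\times\R$ a.s.~infinitely often (recurrence from an arbitrary starting point follows from Lemma~\ref{lem:recurrence} combined with ergodicity of $U_n$ on $\Sp$) and $h$ is continuous, the martingale's a.s.~limit equals the constant $1$. Bounded convergence then gives $h(u,s)=1$.

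Uniform convergence on a compact $K\subset\Sp\times\R$ is extracted from the pointwise convergence by an Arzel\`a--Ascoli-type contradiction: a failing sequence $(u_k,s_k,t_k)$ with $(u_k,s_k)\to(u_*,s_*)\in K$ is promoted via Lemma~\ref{lem:subsequence} and equicontinuity of $\{h_t\}$ on $K$---itself a consequence of the Laplace-transform structure $\varphi_t=e^{-f_t}$ together with the uniform bound above---to a contradiction with $h\equiv 1$. The remaining consequences are then immediate: setting $u=u_0$ in \eqref{eq:convh} gives $L(cr)/L(r)\to 1$ for every $c>0$, so $L$ is slowly varying at $0$; positivity of $L$ comes from the nontriviality of $\LTfp$; and \eqref{eq:slowvar} is just the uniform limit restricted to the compact $\Sp\times\{0\}$. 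For $\liminf_{r\to 0}L(r)=\8$: if $L$ were bounded along some $r_n\to 0$, then $\E\est[\alpha](X)<\8$, and the Biggins-type martingale $\sum_{\abs{v}=n}\est[\alpha](\mL(v)^\top u)e^{-\alpha S^u(v)}$ would be uniformly integrable---contrary to its degeneracy in the critical case $m'(\alpha^-)=0$.

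The main obstacle I expect is the uniform-in-$t$ upper bound for $h_t$ on compact subsets, since it is needed both for passing the subharmonic inequality to the limit and for the equicontinuity used to upgrade pointwise to uniform convergence. I anticipate a bootstrap here: first a crude bound extracted from the superharmonic side, then sharpened step by step as the slow variation of $L$ itself emerges from the argument.
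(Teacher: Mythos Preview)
Your plan diverges from the paper at the key step, and the divergence contains a genuine gap.

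\textbf{What the paper does.} Lemma~\ref{lem:limit} already gives $h(u_0,s)=1$ for all $s$, since $u_0\in\supp\pist[\alpha]$. The paper uses \emph{only this}: every subsequential limit of $h_{t}(u_0,s)$ equals $1$, hence $\lim_{t\to\infty}h_t(u_0,s)=1$ for all $s$, which is precisely the slow variation of $L$. Then, exploiting that $u_0\in\interior{\Sp}$, the elementary two-sided bound $(\min_i(u_0)_i)(1-\LTfp(r\deins))\le 1-\LTfp(ru_0)\le 1-\LTfp(r\deins)$ shows that $\LTfp$ is $L$-$\alpha$-regular in the sense of \cite{Mentemeier2013}, and the full uniform convergence \eqref{eq:convh} on all of $\Sp\times\R$ is read off from \cite[Theorem~8.2]{Mentemeier2013}. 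The extension from $\supp\pist[\alpha]$ to $\Sp$ is thus outsourced to an external regular-variation result rather than obtained from harmonicity of $h$.

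\textbf{Where your approach stalls.} You correctly observe that $h_t$ satisfies the exact identity (so in particular $h_t(u,s)\le\E_u^\alpha h_t(U_1,s+S_1)$), and you want to pass this subharmonic inequality to the subsequential limit to obtain that $h$ is harmonic. But Fatou goes the wrong way here, so you need dominated convergence, and for that a uniform-in-$t$ bound on $h_t$. The bound you propose---a Potter-type estimate for $D(\cdot,\cdot+t)/D(u_0,t)$---presupposes slow variation of $L$, which is exactly what the lemma is proving; the ``bootstrap'' you sketch does not break this circularity, because the superharmonic side only yields information about the \emph{limit} $h$, not a uniform majorant for the family $\{h_t\}$. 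A second, smaller issue: the recurrence statement you invoke (visiting every neighborhood of $\supp\pist[\alpha]\times\R$ from \emph{every} starting $u\in\Sp$) is stronger than what Lemma~\ref{lem:recurrence} and Corollary~\ref{cor:uo} supply; those give recurrence only from $\pist[\alpha]$-a.e.\ point, respectively from the single $u_0$.

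In short: drop the attempt to prove $h\equiv 1$ on all of $\Sp\times\R$ via harmonicity. First extract slow variation of $L$ from $h(u_0,\cdot)\equiv 1$ alone, then use the $u_0$-vs-$\deins$ comparison and \cite[Theorem~8.2]{Mentemeier2013} to get the uniform limit.
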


\begin{proof}
Combining Lemmata \ref{lem:superharmonic} and \ref{lem:limit}, we obtain that for every sequence $t_k \to \infty$ there is a subsequence $t_n \to \infty$ such that for each $s \in \R$,
$$ 1 ~=~ \lim_{n \to \infty} h_{t_n}(u_0,s) ~=~  \lim_{n \to \infty} \frac{1-\LTfp(e^{-(s+t_n)}u_0)}{e^{-\alpha s}(1-\LTfp(e^{-t_n}u_0))}.$$
Since all subsequential limits are the same, we infer that $\lim_{t \to \infty} h_t(u_0,s)=1$ for all $s \in \R$, which in particular proves the slow variation assertion about the function $L(r)$, for $L(sr)/L(r) = h_{-\log r}(u_0, -\log s)$.
Using the estimate
$$ \left( \min_{1 \le i \le d} (u_0)_i \right) (1-\LTfp(r\deins)) ~\le~(1-\LTfp(ru_0)) ~\le~ (1-\LTfp(r\deins))$$
(see \cite[Lemma A.1]{Mentemeier2013}), we deduce further that
$$ 0 ~<~ \liminf_{r \to \infty} \frac{1-\LTfp(r\deins)}{L(r)r^\alpha} ~\le~ \limsup_{r \to \infty} \frac{1-\LTfp(r\deins)}{L(r)r^\alpha} ~<~ \infty,  $$
i.e., $\LTfp$ is {\em $L$-$\alpha$-regular} in the sense of \cite[Definition 2.1]{Mentemeier2013}. Then \cite[Theorem 8.2]{Mentemeier2013}  provides us with the first assertion, i.e. the (uniform) convergence in Eq. \eqref{eq:convh}. Then Eq. \eqref{eq:slowvar} is a direct consequence when considering the compact set $\Sp \times \{0\}$.
\end{proof}

\section{Uniqueness of Fixed Points}\label{sect:uniqueness}

In this section, we are going to finish the proof of Theorem \ref{thm:main}. Therefore, we show that the slowly varying function appearing in \eqref{eq:regvar} is essentially unique, and that this property then identifies the fixed points. The approach is the multivariate analogue of \cite[Theorem 8.6]{Biggins1997}.

We start with the following lemma, the proof of which we postpone to the end of this section for a better stream of arguments.

\begin{lem}\label{lem:maxpos}
Assume \eqref{A1}--\eqref{A3}, \eqref{A5} and \eqref{A6}. Then
\begin{equation}\label{eq:maxpos}
 \lim_{n \to \infty} \max_{\abs{v}=n} \norm{\mL(v)}=0 \qquad \Pfs
\end{equation}
\end{lem}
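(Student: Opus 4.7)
The plan is to reduce the matrix-norm claim \eqref{eq:maxpos} to a one-dimensional minimum-position statement for the associated branching random walk, and then to deduce that statement from the vanishing of the additive martingale in the critical regime. First, fix $u_0 := \mathbf{1}/\sqrt d \in \interior{\Sp}$. For every $\ma \in \Mset$ with entries $a_{ij} \ge 0$,
$$
\abs{\ma^\top u_0}^2 ~=~ \frac1d \sum_j \Bigl(\sum_i a_{ij}\Bigr)^{\!2} ~\ge~ \frac1d \sum_{i,j} a_{ij}^2 ~\ge~ \frac1d \norm{\ma}^2,
$$
so that $\norm{\mL(v)} \le \sqrt d\,\abs{\mL(v)^\top u_0} = \sqrt d\, e^{-S^{u_0}(v)}$ for every $v\in\V$. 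It therefore suffices to show $\min_{\abs{v}=n} S^{u_0}(v) \to \8$ $\Prob$-a.s.

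For this I introduce the \emph{additive martingale}
$$
\widehat W_n(u_0) := \sum_{\abs{v}=n} \est[\alpha]\bigl(U^{u_0}(v)\bigr)\, e^{-\alpha S^{u_0}(v)}.
$$
A direct one-step conditional computation (using the definition \eqref{eq:defmu} of $\mu$, the $\alpha$-homogeneity of $\est[\alpha]$, and the relation $\Pst[\alpha] \est[\alpha] = (\E N)^{-1} \est[\alpha]$, which holds because $m(\alpha)=1$) shows that $(\widehat W_n(u_0), \B_n)_{n\ge0}$ is a nonnegative $\Prob$-martingale; equivalently this is the $f\equiv 1$ instance of the many-to-one formula of Proposition \ref{prop:many to one}, which also gives $\E \widehat W_n(u_0) = \est[\alpha](u_0)$. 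Hence $\widehat W_n(u_0)$ converges $\Prob$-a.s.\ to a limit $\widehat W_\infty(u_0)\ge 0$. Since $\est[\alpha]$ is continuous and strictly positive on the compact set $\Sp$, it is bounded below by some $c_0>0$, yielding
$$
\widehat W_n(u_0) ~\ge~ c_0 \, e^{-\alpha \min_{\abs{v}=n} S^{u_0}(v)}.
$$
Therefore, \emph{provided} $\widehat W_\infty(u_0) = 0$ a.s., one gets $\min_{\abs{v}=n} S^{u_0}(v)\to\8$ and, via the reduction above, the lemma follows.

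The main obstacle is thus the degeneracy statement $\widehat W_\infty(u_0) = 0$ $\Prob$-a.s., which is the critical-case feature of our setting. In the classical one-dimensional i.i.d.\ case this is the Biggins--Kyprianou--Lyons degeneracy of the additive martingale. Here I would run the analogous spine decomposition on $\Sp\times\R$: under the size-biased change of measure, the spine $(U_n, S_n)_{n\ge0}$ evolves as the Markov random walk under $\Prob_{u_0}^\alpha$. Proposition \ref{prop:UnSn} supplies $S_n/n\to 0$, and Lemma \ref{lem:recurrence} gives oscillation, $\liminf S_n = -\8$ and $\limsup S_n = +\8$. These two dynamical facts, together with the moment hypothesis \eqref{A6} (which controls the spine-increments and makes the standard spine estimates go through), force $\widehat W_n \to 0$ $\Prob$-a.s.\ by a Lyons-type dichotomy. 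The technical work is adapting the spine machinery to the compact state space $\Sp$, but the crucial dynamical input has already been made available in Section \ref{sect:preliminaries}.
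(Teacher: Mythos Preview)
Your proposal is correct and follows the same two-step strategy as the paper: bound $\norm{\mL(v)}$ by a constant times $e^{-S^{u_0}(v)}$ for an interior $u_0$, and then deduce $\min_{\abs{v}=n} S^{u_0}(v)\to\infty$ from the a.s.\ vanishing of the additive martingale, which in turn is driven by the oscillation $\limsup S_n=+\infty$ of the spine under $\Prob_{u_0}^\alpha$. Your direct inequality $\norm{\ma}\le\sqrt d\,\abs{\ma^\top u_0}$ is a clean elementary substitute for the paper's appeal to the integral bound $\norm{\ma}^\alpha\le C\int_{\Sp}\abs{\ma y}^\alpha\,\nus[\alpha](dy)$ from \cite{BDGM2014}, and your spine/Lyons-dichotomy sketch is precisely the mechanism behind \cite[Theorem~2.1(iii)]{Biggins2004}, which the paper invokes as a black box (Proposition~\ref{prop:W}).

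One small technical point you should address: Lemma~\ref{lem:recurrence} gives the oscillation only under $\Prob_{\pist[\alpha]}^\alpha$, hence for $\pist[\alpha]$-a.e.\ initial point, and there is no a priori reason why $\mathbf 1/\sqrt d$ lies in $\supp\pist[\alpha]$. The paper handles this by first observing the comparison $W_n(u)\le c_{u_0}\,W_n(u_0)$ valid for \emph{any} $u_0\in\interior{\Sp}$ (so it suffices to kill the martingale for a single well-chosen interior point), and then picking $u_0$ via Corollary~\ref{cor:uo}. Your norm estimate works just as well for any interior $u_0$ with $\sqrt d$ replaced by $(\min_i (u_0)_i)^{-1}$, so you can simply use the $u_0$ supplied by Corollary~\ref{cor:uo} and the gap closes.
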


For $u \in \Sp$, we can introduce for $t \in \R$ the {\em homogeneous stopping line}
$$ \slineu[t] ~:=~ \left\{ v \in \tree \, : \, S^u(v) > t, \ S^u(v|k) \le t \, \forall k < \abs{v} \right\}. $$
Since $\max_{\abs{v}=n} \norm{\mL(v)} \to 0$ $\Pfs$ by Lemma \ref{lem:maxpos}, this stopping line is finite $\Pfs$ and intersects the whole tree ({\em is dissecting}). 

Let $\LTfp$ be a fixed point of $\ST$. Define 
$$ M_n(x) := \prod_{\abs{v}=n} \, \LTfp(\mL(v)^\top x), \qquad x \in \Rdnn.$$
By Eq. \eqref{eq:STLT}, this constitutes a bounded martingale w.r.t.~$\B_n$ for every $x$ and we call its $\Pfs$ limit $M(x) \in [0, \infty)$ the {\em disintegration} of the fixed point $\LTfp$.
Setting
$$ Z(x) ~:=~ - \log M(x),$$ the martingale property together with  boundedness implies that $\LTfp(x)=\E \exp(-Z(x))$ for all $x \in \Rdnn$.
{Following the proof of \cite[Lemma 4.1]{AM2010a}, one can show that $M(\cdot,\omega)$ is a Laplace transform for $\Prob$-a.e. $\omega \in \Omega$, and that $M$ is jointly measurable on $\Sp \times \Omega$. This implies the same for $Z$.}

\begin{prop}\label{prop:dis}
Assume \eqref{A1}-- \eqref{A6} { and \eqref{A8}}. Let $\LTfp$ be a fixed point of $\ST$ with disintegration $M$. Let $F : \Rdnn \to [0,\infty)$ be a nonnegative measurable function with $\lim_{s \to 0} \sup_{u \in \Sp} \abs{F(su)-\gamma}=0$ for some $\gamma \ge 0$. Then the following holds:
\begin{enumerate}
\item $\lim_{n \to \infty} \, \sum_{\abs{v}=n} F(\mL(v)^\top x) (1 - \LTfp(\mL(v)^\top x)) = \gamma Z(x)$ $\Pfs$\label{as2}
\item For all $u \in \Sp$, $r \in \Rp$, $Z(ru) = r^\alpha Z(u)$.\label{as3}
\item $\LTfp(ru) = \E e^{-r^\alpha Z(u)}$ for all $u \in \Sp$, $r \ge 0$.\label{as4}
\item $ Z(u) \in (0, \infty)$ $\Pfs$.\label{as5}
\item \label{eq:Mt} $\lim_{t \to \infty} \, \sum_{v \in \slineu[t]} \left(1 - \LTfp(e^{-S^u(v)} U^u(v)) \right) ~=~ Z(u)$  $\Pfs$ for all $u \in \Sp$.
\end{enumerate}
\end{prop}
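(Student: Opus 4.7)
The central object is the bounded multiplicative martingale $M_n(x) = \prod_{|v|=n}\LTfp(\mL(v)^\top x)$, whose a.s.\ limit $M(x) = e^{-Z(x)}$ is, for $\Prob$-a.e.\ $\omega$, the Laplace transform of an $\Rdnn$-valued random variable (as noted just before the statement). In particular $M(x,\omega) > 0$, and hence $Z(x) < \infty$, $\Prob$-a.s., which already supplies the finiteness half of \ref{as5}. For \ref{as2}, I pass to $-\log M_n(x) \to Z(x)$ and exploit Lemma \ref{lem:maxpos}: $\max_{|v|=n}\|\mL(v)\| \to 0$ a.s., together with continuity of $\LTfp$ at the origin, forces $\epsilon_n := \max_{|v|=n}(1-\LTfp(\mL(v)^\top x))$ to tend to zero. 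The elementary bound $0 \le -\log(1-y) - y \le y^2$ on $[0,\tfrac12]$ gives
\[
\Bigl|\,{-\log M_n(x)} - \sum_{|v|=n}(1-\LTfp(\mL(v)^\top x))\,\Bigr| \ \le\ \epsilon_n\sum_{|v|=n}(1-\LTfp(\mL(v)^\top x)),
\]
so, once the error is absorbed, the sum itself is bounded and converges a.s.\ to $Z(x)$. Multiplying each summand by $F(\mL(v)^\top x)$ and using the assumed uniform convergence $F(su) \to \gamma$ then yields \ref{as2}.

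For \ref{as4}, bounded convergence applied to $\LTfp(x) = \E M_n(x)$ gives $\LTfp(x) = \E e^{-Z(x)}$, so the formula follows once \ref{as3} is known. To prove \ref{as3}, I apply \ref{as2} twice, for fixed $r > 0$ and $u \in \Sp$: first with $F \equiv 1$ and argument $x = ru$ (yielding limit $Z(ru)$), and second with $x = u$ and
\[
F(y) := \frac{1-\LTfp(ry)}{1-\LTfp(y)}, \qquad y \ne 0.
\]
By the regular variation property \eqref{eq:regvar} (uniform in the angular coordinate) together with the slow variation of $L$, this $F$ extends continuously to $0$ with constant value $r^\alpha$, so the $F$-weighted sum converges a.s.\ to $r^\alpha Z(u)$. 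Equating the two sums (which coincide term by term) gives $Z(ru) = r^\alpha Z(u)$ a.s.

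The main obstacle is \ref{as5}, namely $Z(u) > 0$ a.s.\ for every $u \in \Sp$. From \ref{as4} and dominated convergence as $r \to \infty$, $\P{Z(u)=0} = \P{\langle u, X\rangle = 0}$, so it suffices to show that $\langle u, X\rangle > 0$ a.s.\ for every $u \in \Sp$. I argue in two steps. \emph{Step 1:} iterating the FPE gives $X \eqdist \sum_{|v|=n}\mL(v) X(v)$; since every matrix in $\supp\mu$ is allowable (no zero column), $\mL(v) y = 0$ forces $y = 0$ on $\Rdnn$, so $\{X = 0\}$ implies $\{X(v) = 0\}$ for every $v$ with $|v|=n$ in the tree. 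Hence $\P{X=0}$ is a fixed point of the generating function of the generation-$n$ size for every $n$; as $N \ge 1$ a.s.\ and $\E N > 1$ make the generation sizes tend to infinity a.s., the only fixed points in $[0,1]$ are $0$ and $1$, and non-triviality of $\law{X}$ excludes $1$. \emph{Step 2:} condition \eqref{A3} provides an all-positive product $\ma_1\cdots\ma_k$ from $\supp\mu$, so some $\delta > 0$ lower bounds the conditional probability that a given node has a length-$k$ descendant with all-positive $\mL$. Since the generation sizes tend to infinity, a.s.\ there exists $v_*$ with $\mL(v_*) \in \interior{\Mset}$. On the full-probability event $\{X(v_*) \ne 0\}$ (by Step 1), $\mL(v_*) X(v_*)$ lies in $\interior{\Rdnn}$, and therefore $\langle u, X\rangle \ge \langle u, \mL(v_*) X(v_*)\rangle > 0$ for every $u \in \Sp$.

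Finally, \ref{eq:Mt} follows by running the same argument as in \ref{as2} along the homogeneous stopping line $\slineu[t]$. Since $\max_{|v|=n}\|\mL(v)\| \to 0$ by Lemma \ref{lem:maxpos}, the stopping line $\slineu[t]$ is a.s.\ finite and dissecting, and optional stopping applied to the bounded martingale $M_n(u)$ gives $\prod_{v \in \slineu[t]}\LTfp(\mL(v)^\top u) \to M(u) = e^{-Z(u)}$ a.s.\ as $t \to \infty$. Because $\max_{v \in \slineu[t]}|\mL(v)^\top u| \le e^{-t} \to 0$, the $-\log$/Taylor step from the proof of \ref{as2} applies verbatim and delivers the convergence of the sum to $Z(u)$.
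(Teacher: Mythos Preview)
Your approach is essentially the same as the paper's: for \ref{as2} the paper refers to \cite[Lemma 7.3]{Mentemeier2013}, which is exactly the $-\log$/Taylor comparison you spell out (using Lemma \ref{lem:maxpos}); for \ref{as3} the paper also applies \ref{as2} with $F(y)=(1-\LTfp(ry))/(1-\LTfp(y))$ and invokes Lemma \ref{lem:slowvar}; and \ref{as4} is immediate in both. The paper does not give an argument for \ref{eq:Mt}, so your stopping-line version of the same estimate is a welcome addition.

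One point where your write-up is actually more careful than the paper's text: for \ref{as5} the paper only records $\P{X=0}=0$ and concludes $Z(u)>0$. Since $\P{Z(u)=0}=\P{\skalar{u,X}=0}$, this implication is immediate only for $u\in\interior{\Sp}$; for $u$ on the boundary of $\Sp$ one genuinely needs your Step~2, producing via \eqref{A3} a node $v_*$ with $\mL(v_*)\in\interior{\Mset}$ so that $\mL(v_*)X(v_*)\in\interior{\Rdnn}$ almost surely. (Just be sure to phrase the final inequality distributionally: it is $\sum_{|v|=n}\mL(v)X(v)\ge \mL(v_*)X(v_*)$, and then $X\eqdist\sum_{|v|=n}\mL(v)X(v)$ transfers $\P{\skalar{u,\,\cdot\,}=0}=0$ to $X$.)
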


\begin{proof}
Using Lemma \ref{lem:maxpos}, the proof of Assertion \eqref{as2}  is the same as for \cite[Lemma 7.3]{Mentemeier2013} and therefore omitted.
By Lemma \ref{lem:slowvar}, for all $r \in \Rp$ and $u \in \Sp$, the function $F(su):= \frac{1- \LTfp(rsu)}{1-\LTfp(ru)}$ converges uniformly to $r^\alpha$. 
Thus we obtain \eqref{as3} by an application of \eqref{as2}. Then \eqref{as4} is an immediate consequence of $\LTfp(x)=\E \exp(-Z(x))$.

Reasoning as in the proof of \cite[Theorem 2.7, Step 6]{BDGM2014}, we see that for any nontrivial fixed point $X$ of $\ST$, $\P{X=0}=0$, and consequently $Z(u) > 0$ $\Pfs$. On the other hand, since $\LTfp$ is the Laplace transform of a random variable on $\Rdnn$, $Z(u) < \infty$ $\Pfs$
\end{proof}



The subsequent lemma is where we use assumption \eqref{A7}. Using the definition of $\mu$, it implies that with $c' := - \log c$
\begin{align*} \P{S^u(i) > c' \ \forall\, 1 \le i \le N} ~\le&~ \E \bigg[ \sum_{i=1}^N \1\big( S^u(i) > c' \big) \bigg] \\
=&~ (\E N) \E \bigg[ \1\big( -\log \abs{\mM^\top u} > c' \big)  \bigg] ~=~ (\E N) \P{\abs{\mM^\top u} < c} \\
\le&~ (\E N) \P{\iota(\mM^\top) < c} ~=~0.
\end{align*}
In other words, the increments of $S(vi) - S(v)$ are $\Pfs$ bounded by $c'$.

\begin{lem}\label{lem:LMt}
Assume \eqref{A1}--\eqref{A8}. 
Let $\LTfp$ be a nontrivial fixed point of $\ST$ with associated slowly varying function $L$ given by Eq. \eqref{eq:defL}.
Then 
\begin{equation}
\label{eq:LMt} \lim_{t \to \infty} L(e^{-t}) \, \sum_{v \in \slineu[t]} \est[\alpha](U^u(v)) e^{- \alpha S^u(v)} ~=~ Z(u) \quad \Pfs[u]
\end{equation}
\end{lem}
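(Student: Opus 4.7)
The plan is to compare the target sum to the one already handled by Proposition~\ref{prop:dis}(\ref{eq:Mt}), which gives
\[
 \lim_{t \to \infty} \sum_{v \in \slineu[t]} \bigl(1-\LTfp(e^{-S^u(v)}U^u(v))\bigr) ~=~ Z(u) \quad \Pfs.
\]
The strategy is to show that, along the stopping line $\slineu[t]$, each summand in the original sum is well approximated by $L(e^{-S^u(v)})\,e^{-\alpha S^u(v)}\est[\alpha](U^u(v))$, using the uniform regular variation from Lemma~\ref{lem:slowvar}, and that $L(e^{-S^u(v)})$ may be replaced by $L(e^{-t})$ thanks to slow variation, provided the \emph{overshoots} $S^u(v)-t$ are uniformly bounded.

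The overshoot bound is the point at which assumption~\eqref{A7} enters. If $v\in\slineu[t]$ has parent $v'$, then $S^u(v')\le t$ while $S^u(v)-S^u(v')=-\log\bigl|\mT^\top(v')\,U^u(v')\bigr|\le-\log \iota(\mT^\top(v'))\le c':=-\log c$ $\Pfs$ by \eqref{A7}. Hence for every $v\in\slineu[t]$,
\[
 t ~<~ S^u(v) ~\le~ t+c'.
\]
In particular, $e^{-t-c'}\le e^{-S^u(v)}<e^{-t}$, so that by slow variation of $L$,
\[
\sup_{v\in\slineu[t]} \left|\frac{L(e^{-t})}{L(e^{-S^u(v)})}-1\right|~\xrightarrow[t\to\infty]{}~0.
\]

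Combining this with the uniform limit in Lemma~\ref{lem:slowvar} (and using that $\est[\alpha]$ is continuous and strictly positive on the compact set $\Sp$, hence bounded away from $0$ and $\infty$), we obtain factors $1+\rho_v(t)$ with $\sup_{v\in\slineu[t]}|\rho_v(t)|\to 0$ $\Pfs$ such that
\[
L(e^{-t})\,\est[\alpha](U^u(v))\,e^{-\alpha S^u(v)}~=~\bigl(1-\LTfp(e^{-S^u(v)}U^u(v))\bigr)\bigl(1+\rho_v(t)\bigr).
\]
Summing over $v\in\slineu[t]$ and using the triangle inequality yields
\[
\left| L(e^{-t})\!\!\sum_{v \in \slineu[t]}\!\!\est[\alpha](U^u(v))e^{-\alpha S^u(v)}-\!\!\sum_{v\in\slineu[t]}\!\!\bigl(1-\LTfp(e^{-S^u(v)}U^u(v))\bigr)\right| \le \sup_{v \in \slineu[t]}|\rho_v(t)|\cdot\!\!\sum_{v\in\slineu[t]}\!\!\bigl(1-\LTfp(e^{-S^u(v)}U^u(v))\bigr).
\]
Proposition~\ref{prop:dis}(\ref{eq:Mt}) says the last sum converges to the finite quantity $Z(u)$ $\Pfs$, while $\sup_v|\rho_v(t)|\to 0$; therefore the right-hand side tends to $0$. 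Combined with Proposition~\ref{prop:dis}(\ref{eq:Mt}) once more, this gives the claimed identity. The main obstacle in this plan is the uniformity of the two approximations along $\slineu[t]$; this is precisely why \eqref{A7} (forcing a uniform overshoot bound) is indispensable, whereas the uniformity in the angular variable is already provided by Lemma~\ref{lem:slowvar}.
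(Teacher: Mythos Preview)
Your proof is correct and follows essentially the same approach as the paper's. Both arguments rest on the same three ingredients: the overshoot bound $t<S^u(v)\le t+c'$ from \eqref{A7}, the uniform convergence in Lemma~\ref{lem:slowvar}, and Proposition~\ref{prop:dis}\eqref{eq:Mt}. The only cosmetic difference is that you split the approximation into two steps (first replacing $L(e^{-S^u(v)})$ by $L(e^{-t})$ via slow variation, then invoking \eqref{eq:slowvar}), whereas the paper applies the uniform convergence \eqref{eq:convh} directly on the compact set $\Sp\times[0,c']$ with $s=S^u(v)-t$, which absorbs both steps at once.
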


\begin{proof}
By Lemma \ref{lem:slowvar}, 
$$ \lim_{t \to \infty} \frac{1 - \LTfp(e^{-s-t}y)}{\est[\alpha](y) e^{-\alpha(s+t)} L(e^{-t}) } = 1,$$
and the convergence is uniform on compact sets for $(y,s)$. In particular, it is uniform on the set $\Sp \times [0,c']$. Now applying this result with $s=S^u(v)-t$ and $y = U^u(v)$ with $v \in \slineu[t]$ and using that $$0 ~<~ S^u(v)-t ~\le~ S^u(v)-S^u(v|(\abs{v}-1))   \in [0,c']$$ by Assumption \eqref{A7}, we deduce from Proposition \ref{prop:dis}, \eqref{eq:Mt} that
\begin{align*}
Z(u) ~=&~ \lim_{t \to \infty} \, \sum_{v \in \sline[t]} L(e^{-t}) \est[\alpha](U^u(v)) e^{- \alpha S^u(v)} \frac{1 - \LTfp(e^{-(S^u(v)-t)-t}U^u(v))}{\est[\alpha](U^u(v)) e^{-\alpha(S^u(v)-t+t)} L(e^{-t}) }\\
=&~ \lim_{t \to \infty} L(e^{-t}) \, \sum_{v \in \sline[t]} \est[\alpha](U^u(v)) e^{- \alpha S^u(v)} \qquad \Pfs
\end{align*}
\end{proof}

\begin{rem}
The idea of this proof follows that of \cite[Theorem 8.6]{Biggins1997}. There an assumption similar to \eqref{A7} is avoided by using the theory of general branching processes, see \cite{Jagers1975,Nerman1981}. A similar approach is taken in \cite{Mentemeier2013} in the non-critical case, a crucial ingredient of which is an application of Kesten's renewal theorem \cite[Theorem 1]{Kesten1974}. In the critical case, a variant of Kesten's renewal theorem for driftless Markov random walks, or a strong theory of Wiener-Hopf factorization seems to be needed in order to proceed along similar lines.
\end{rem}

Now we are ready to prove our main result.


\begin{proof}[Proof of Theorem \ref{thm:main}]
\Step[1]: By Proposition \eqref{prop:existence}, there is a nontrivial fixed point of $\ST$ with LT $\LTfp$, say. By Proposition \ref{prop:dis}, for each $u \in \Sp$, there is a random variable $Z(u)$ with $\P{Z(u)>0}=1$ and such that $\LTfp(ru)=\E [\exp(-r^\alpha Z(u)) ]$ for all $r \in [0,\infty)$. 
Define $L(r)$ by \eqref{eq:defL}, choosing a suitable $u_0$.
\medskip 

\Step[2]: Let now $\LTfp_2$ be the Laplace transform of a different nontrivial fixed point, with corresponding disintegration $M_2$ and $Z_2$, and slowly varying function $L_2$, defined by \eqref{eq:defL}, using the same $u_0$ as before. Recall that $Z(u)$ and $Z_2(u)$ are $\Pfs$ positive and finite by by Proposition \ref{prop:dis}, \eqref{as5} for each $u \in \Sp$. 
%
Then we have by Lemma \ref{lem:LMt} that $\Pfs$,
$$ \lim_{t \to \infty} \frac{Z_2(u)}{Z(u)} = \lim_{t \to \infty} \frac{L_2(e^{-t}) \, \sum_{v \in \slineu[t]} \est[\alpha](U^u(v)) e^{- \alpha S^u(v)} }{L(e^{-t}) \, \sum_{v \in \slineu[t]} \est[\alpha](U^u(v)) e^{- \alpha S^u(v)} } = \lim_{t \to \infty} \frac{L_2(e^{-t})}{L(e^{-t})}.$$
First, fixing $u \in \Sp$, this proves that the limit of the right hand side exists and equals some $K \in (0, \infty)$. Then, using  the equation again for general $u$, we obtain  $Z_2(u) = K Z(u)$ $\Pfs$. Consequently,
$$ \LTfp_2(ru)= \Erw{e^{-r^\alpha Z_2(u)}} = \Erw{e^{-r^\alpha K Z(u)}} = \LTfp(K^{1/\alpha}ru),$$
which proves Eq. \ref{LTofFP}.

\medskip

\Step[3]: Fix $L$ to be the slowly varying function corresponding to $\LTfp$. Then Eq. \eqref{eq:regvar} follows from Eq. \eqref{eq:slowvar} for this particular $\LTfp$, and moreover, 
$$ \lim_{r \to 0} \frac{1-\LTfp_2(ru)}{r^\alpha L(r)} ~=~ \lim_{r \to 0} \frac{K(1-\LTfp(K^{1/\alpha}ru))}{K r^\alpha L(K^{1/\alpha}r)} \frac{L(K^{1/\alpha}r)}{L(r)} ~=~ K H^\alpha(u)$$
The final assertion about $\limsup_{r \to 0} L(r)$ will be proved in Lemma \ref{lem:Linfty}.
\end{proof}

\subsection{Proof of Lemma \ref{lem:maxpos}}

Using Proposition \ref{prop:many to one}, one shows that for all $u \in \Sp$,
$$ W_n(u) ~:=~ \sum_{\abs{v}=n} H^\alpha(\mL(v)^\top u) ~=~  \sum_{\abs{v}=n} \int_{\Sp} \, \skalar{\mL(v)^\top u,y}^\alpha \, \nus[\alpha](dy)$$
defines a nonnegative martingale w.r.t.~the filtration $\B_n$. Its $\Pfs$ limit $W(u)$ appears prominently in the non-critical case, where every fixed point has a Laplace transform of the form $\LTa(ru)=\E \exp(-K r^\alpha W(u))$, see \cite[Theorem 1.2]{Mentemeier2013}.
In the critical case, its limit is trivial:

\begin{prop}\label{prop:W}
Assume \eqref{A1}--\eqref{A3} and \eqref{A5} and \eqref{A6}. Then $W(u) = 0$ $\Pfs$ for all $u \in \Sp$.
\end{prop}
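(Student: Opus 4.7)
The plan is to apply the classical spine / size-biased measure technique (Lyons, Biggins-Kyprianou) to show that $W_n(u)$ is not uniformly integrable, and then combine this with a $0$-$1$ law from the branching recursion to upgrade non-UI into $W(u)=0$ $\Pfs$

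Using the $\alpha$-homogeneity of $\est[\alpha]$,
$$ W_n(u) ~=~ \sum_{\abs{v}=n} \est[\alpha](\mL(v)^\top u) ~=~ \sum_{\abs{v}=n} e^{-\alpha S^u(v)} \est[\alpha](U^u(v)).$$
Proposition \ref{prop:many to one} applied with $s=\alpha$, $f\equiv 1$, and $m(\alpha)=1$ yields $\E W_n(u) = \est[\alpha](u)$; together with the branching structure, $(W_n(u))_n$ is a non-negative $\B_n$-martingale and converges $\Pfs$ to some $W(u)\in[0,\infty)$. On an enlargement of the probability space carrying a distinguished ray $(\xi_n)_{n\ge 0}$, define the size-biased law $\QQ_u$ by
$$ \frac{d\QQ_u}{d\Prob}\bigg|_{\B_n} ~=~ \frac{W_n(u)}{\est[\alpha](u)},$$
selecting the spine at each generation among the children of $\xi_{n-1}$ with probability proportional to $e^{-\alpha S^u(\xi_{n-1}i)}\est[\alpha](U^u(\xi_{n-1}i))$. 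The many-to-one identity then shows that under $\QQ_u$ the spine process $(U^u(\xi_n),S^u(\xi_n))_{n\ge 0}$ has the law of the associated Markov random walk $(U_n,S_n)_{n\ge 0}$ under $\Prob_u^\alpha$.

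Since $\est[\alpha]$ is continuous and strictly positive on the compact $\Sp$, there is $c>0$ with $\est[\alpha]\ge c$, so
$$ W_n(u) ~\ge~ c\, e^{-\alpha S^u(\xi_n)} \qquad \QQ_u\text{-a.s.}$$
The critical assumption \eqref{A5} together with Corollary \ref{cor:uo} gives $\liminf_n S_n = -\infty$ $\Prob_{u_0}^\alpha$-a.s. Extending this to an arbitrary starting point $u\in\Sp$ via the Markov property (the chain $(U_n)$ reaches every neighborhood of $u_0$ $\Prob_u^\alpha$-a.s.\ by ergodicity, and thereafter the oscillation takes over up to a bounded shift), one obtains $\liminf_n S^u(\xi_n)=-\infty$ $\QQ_u$-a.s. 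Hence $\limsup_n W_n(u) = \infty$, i.e.\ $W(u)=\infty$, $\QQ_u$-a.s. The Biggins-Lyons dichotomy then rules out uniform integrability of $(W_n(u))_n$ under $\Prob$. To conclude that $W(u)=0$ $\Pfs$ rather than merely $\E W(u) < \est[\alpha](u)$, use the branching recursion
$$ W(u) ~\eqdist~ \sum_{i=1}^N W^{(i)}(\mT_i^\top u),$$
with $(W^{(i)})$ i.i.d.\ copies of $W$ independent of $(\mT_i)_i$. Since $N\ge 1$ $\Pfs$ the Galton-Watson tree survives surely, and the standard Biggins $0$-$1$ law derived from this recursion gives $\Prob(W(u)=0)\in\{0,1\}$; non-UI forces the value to be $1$.

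The main obstacle is transferring the oscillation $\liminf_n S_n=-\infty$ from the single starting point $u_0$ provided by Corollary \ref{cor:uo} to every $u\in\Sp$. One must combine condition \condC\ (which implies that $\pist[\alpha]$-null sets cannot obstruct the chain) with the Markov property and the fact that the event $\{\liminf S_n=-\infty\}$ is, up to an additive constant bounded by one step of the walk, shift-invariant. Once this is settled, the remaining ingredients (many-to-one, spine construction, Biggins-Lyons dichotomy, $0$-$1$ law) are standard.
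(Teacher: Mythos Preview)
Your approach via the spine change of measure and the Biggins--Kyprianou dichotomy is exactly the one the paper uses (it invokes \cite[Theorem 2.1(iii)]{Biggins2004} for this). The difference lies in how the dependence on the starting point $u\in\Sp$ is handled, and here the paper is both simpler and entirely avoids what you correctly flag as your main obstacle.

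The paper does \emph{not} attempt to establish oscillation of $(S_n)$ under $\Prob_u^\alpha$ for every $u$. Instead, it observes that for any fixed $u_0\in\interior{\Sp}$ and $c:=(\min_i (u_0)_i)^{-1}$ one has the pointwise inequality $\skalar{u,\mL(v)y}\le c\,\skalar{u_0,\mL(v)y}$ for all $u,y\in\Sp$, whence $W_n(u)\le c\,W_n(u_0)$ pathwise for every $n$. Thus proving $W(u_0)=0$ $\Pfs$ for the single $u_0$ supplied by Corollary~\ref{cor:uo} already forces $W(u)=0$ $\Pfs$ for all $u\in\Sp$. This sidesteps the need to transfer the oscillation from $u_0$ to arbitrary $u$; under the bare assumptions \eqref{A1}--\eqref{A6} that transfer is not established anywhere in the paper (Lemma~\ref{lem:recurrence} gives recurrence only from $\pist[\alpha]$), and your ergodicity sketch would require genuine additional work, in particular for $u$ on the boundary $\partial\Sp$. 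Finally, your closing $0$--$1$ law is unnecessary: once the spine contribution blows up under the size-biased measure, the Biggins--Kyprianou dichotomy already yields $\E W(u_0)=0$, and nonnegativity gives $W(u_0)=0$ $\Pfs$ directly.
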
 

\begin{proof}
Since $W(u)$ as the limit of a nonnegative martingale is again nonnegative, it suffices to show that $\E W(u)=0$. It even suffices to show that $\E W(u_0)=0$ for one $u_0 \in \interior{\Sp}$, for due to nonnegativity 
\begin{equation}\label{eq:eq1}\skalar{ u,\mL(v) y} ~\le~ \skalar{ \deins, \mL(v) y} ~\le~ \frac{1}{\min_i \, (u_0)_i} \skalar{u_0, \mL(v) y}
\end{equation} and hence $W_n(u) \le c W_n(u_0)$ for $c=[\min_{i} (u_0)_i]^{-1}$. 

It is shown in \cite[Theorem 2.1 (iii)]{Biggins2004}, that $\E W(u_0)=0$ follows from
 $\limsup_{n \to \infty} \est[\alpha](U_n)e^{\alpha S_n} = \infty$ $\Prob_{u_0}^\alpha$-a.s. But the latter is a direct consequence of \eqref{eq:oscillates2}, together with the strict positivity of $\est[\alpha]$. 
\end{proof}




\begin{proof}[Proof of Lemma \ref{lem:maxpos}]
Let as before $u_0 \in \interior{\Sp}$ and set $c=[\min_{i} (u_0)_i]^{-1} < \infty$. Recalling Eq. \ref{eq:eq1} and the definition of $W_n(u_0)$, we have 
$$c W_n(u) ~\ge~ \sum_{\abs{v}=n} \int_{\Sp} \abs{\mL(v)y}^\alpha \, \nus[\alpha](dy).$$ By \cite[Corollary 4.7]{BDGM2014}, there is a  constant $C$ such that for any allowable $\ma$, $\norm{\ma}^\alpha \le C \int_{\Sp} \, \abs{\ma y}^\alpha \, \nus[\alpha](dy),$ hence $$ \frac{C}{c_u} \sqrt{d} W_n(\eins) ~\ge~ \sum_{\abs{v}=n} \norm{\mL(v)}^\alpha ~\ge~ \max_{\abs{v}=n}\, \norm{\mL(v)}^\alpha,$$
and the assertion follows.
\end{proof}

%
%
%
%
%

\begin{lem}\label{lem:Linfty}
Under the assumptions of Theorem \ref{thm:main}, $\limsup_{r \to \infty} L(r)=0$. If $\alpha=1$, then $\E\abs{X}=\infty$ for every nontrivial fixed point $X$.
\end{lem}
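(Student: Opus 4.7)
The first claim, $\limsup_{r\to\infty}L(r)=0$, is immediate from the machinery already set up: by Proposition~\ref{prop:dis}(3) and~(5), $\LTfp(ru_0)=\E[\exp(-r^\alpha Z(u_0))]$ with $Z(u_0)\in(0,\infty)$ a.s., so dominated convergence gives $\LTfp(ru_0)\to 0$ as $r\to\infty$, and hence $L(r)=(1-\LTfp(ru_0))/(r^\alpha\est[\alpha](u_0))\to 0$. I would dispatch this in a single line.

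For the second assertion, $\E|X|=\infty$ when $\alpha=1$, the plan is to first establish that $L(r)\to\infty$ as $r\downarrow 0$ (thereby also sharpening the $\liminf$ claim of Theorem~\ref{thm:main}), then invoke the uniform regular variation~\eqref{eq:slowvar}. Lemma~\ref{lem:LMt} gives
$$ L(e^{-t})\,M_t^*(u)\xrightarrow{t\to\infty} Z(u)\in(0,\infty)\ \Pfs,\qquad M_t^*(u):=\sum_{v\in\slineu[t]}\est[\alpha](U^u(v))e^{-\alpha S^u(v)}, $$
so it will suffice to prove the stopping-line analogue $M_t^*(u)\to 0$ a.s. The plan is to deduce this from Proposition~\ref{prop:W} ($W_n(u)\to 0$ a.s.): because $\max_{|v|=n}\|\mL(v)\|\to 0$ by Lemma~\ref{lem:maxpos}, each line $\slineu[t]$ is dissecting and its minimal generation tends to $\infty$ with $t$. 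Using the ancestral decomposition---every node at generation $n$ has a unique ancestor in $\slineu[t]$ once $n$ is large relative to $t$---one would compare $M_t^*(u)$ with $W_n(u)$ along descendants, in the spirit of classical stopping-line arguments for branching random walks (Biggins/Kyprianou). This forces $L(e^{-t})\to\infty$, and since $L$ is deterministic, $\lim_{r\downarrow 0}L(r)=\infty$.

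With $L(r)\to\infty$ as $r\downarrow 0$ in hand, the $\alpha=1$ conclusion will be quick. By the uniform convergence~\eqref{eq:slowvar} and Proposition~\ref{prop:UnSn}(4), for any $u\in\interior{\Sp}$,
$$ \frac{1-\LTfp(ru)}{r}=L(r)\,\est[1](u)\,(1+o(1)),\qquad \est[1](u)=\skalar{u,v_\matrix{b}}>0, $$
so the ratio diverges as $r\downarrow 0$. Monotone convergence identifies the limit with $\E\skalar{u,X}$, whence $\E\skalar{u,X}=\infty$ for every $u$ with strictly positive entries; choosing $u=\eins/\sqrt{d}$ then yields $\E|X|=\infty$. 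The main obstacle will be the stopping-line convergence $M_t^*(u)\to 0$: the ancestral decomposition and the order of limits in $t$ and $n$ require careful bookkeeping, but once granted, the rest of the lemma unwinds as sketched.
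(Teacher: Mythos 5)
The statement of the lemma contains a typo (the intended claim, and the one the paper actually proves, is $\limsup_{r\to 0}L(r)=\infty$, which is what Theorem~\ref{thm:main} references and what the second part uses). You read it literally and give a correct but trivial proof of $L(r)\to 0$ as $r\to\infty$ --- in fact $1-\LTfp\le 1$ already forces this without invoking Proposition~\ref{prop:dis} at all. More importantly, you correctly recognize that the substantive content is $L(r)\to\infty$ as $r\downarrow 0$, and you realize the argument for $\alpha=1$ must rest on it.

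Your route to $L(r)\to\infty$ as $r\downarrow 0$ is where the real gap lies. You propose to show the stopping-line sum $M_t^*(u):=\sum_{v\in\slineu[t]}\est[\alpha](U^u(v))e^{-\alpha S^u(v)}\to 0$ a.s.\ and then combine this with Lemma~\ref{lem:LMt}. You flag yourself that passing from $W_n(u)\to 0$ (Proposition~\ref{prop:W}) to $M_t^*(u)\to 0$ is ``the main obstacle,'' and indeed it is: the ancestral decomposition you sketch requires a genuine stopping-line limit theorem, and the usual tools (uniform integrability along lines) are not available here precisely because the limit is degenerate. This is circular in spirit, since Lemma~\ref{lem:LMt} is itself the device used to compare the two fixed points, and nothing in the paper supplies the stopping-line convergence you need. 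The paper's own argument avoids the stopping line entirely and is much shorter: assume for contradiction $\limsup_{r\to 0}L(r)\le C<\infty$; by Proposition~\ref{prop:dis}\eqref{as2} with $F\equiv 1$ one has $Z(u)=\lim_n\sum_{|v|=n}(1-\LTfp(\mL(v)^\top u))$; inserting the factor $L(|\mL(v)^\top u|)H^\alpha(\mL(v)^\top u)$ and its reciprocal and using Lemma~\ref{lem:slowvar} to control the ratio, one bounds $Z(u)\le C\lim_n W_n(u)=CW(u)=0$ by Proposition~\ref{prop:W}, contradicting $Z(u)>0$ from Proposition~\ref{prop:dis}\eqref{as5}. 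No stopping lines, no bookkeeping.

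Your treatment of the $\alpha=1$ case is essentially identical to the paper's and is fine once the divergence of $L$ is in hand. If you want to keep your structure, the honest fix is to replace the $M_t^*$ detour with the paper's generation-$n$ contradiction argument, which uses only material already proved.
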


\begin{proof}
Suppose that $\limsup_{r \to 0} L(r) \le C < \infty$. By an extension of Prop. \ref{prop:dis}, \eqref{as2}, 
\begin{align*} Z(u) ~=&~  \lim_{n \to \infty} \sum_{\abs{v}=n} L(\abs{\mL(v)^\top u}) \,  H^\alpha(\mL(v)^\top u) \frac{1-\LTfp(\mL(v)^\top u)}{L(\abs{\mL(v)^\top u}) \,  H^\alpha(\mL(v)^\top u)} \\ ~\le&~ C \lim_{n \to \infty}  \sum_{\abs{v}=n} H^\alpha(\mL(v)^\top u) ~=~ C W(u) =0 \end{align*}
by Proposition \ref{prop:W}, which gives a contradiction.

If now $\alpha=1$, then $$ \lim_{r \to 0} \frac{1-\LTfp(ru)}{r} ~=~ \skalar{u, \E X},$$
being finite or not. Combining this with Eq. \eqref{eq:regvar} implies that
$$ \lim_{r \to 0} L(r) ~=~ \frac{\skalar{u, \E X}}{K \est[\alpha](u)},$$ hence $\E \abs{X} =\infty$, since $\limsup_{r \to 0} L(r)=\infty$.
\end{proof}

\section{Determining the Slowly Varying Function}\label{sect:L}

In this section, we work under one of the additional assumptions \eqref{A4c} or \eqref{A4f}, together with \eqref{A8}. 
We want to identify the slowly varying function $L$, which was (given a nontrivial fixed point $\LTfp$ and a reference point $u_0 \in \supp \nust[\alpha]$) defined in Eq. \eqref{eq:defL} to be
$$ L(r) ~=~ \frac{1-\LTfp(ru_0)}{r^\alpha H^\alpha (u_0)} ~=~ D(u_0, - \log r).$$
We are going to show that
\begin{equation}\label{eq:Dsv} \lim_{t \to \infty} \frac{D(u_0, t)}{t} = K' \in (0,\infty), \end{equation}
which gives that $\lim_{r \to 0}L(r)/\abs{\log r} =K'$, i.e. we may choose the slowly varying function to be a scalar multiple of $\abs{\log r} \vee 1$.

The basic idea to prove Eq. \eqref{eq:Dsv} comes from \cite{DL1983} and is by using a renewal equation satisfied by (the one-dimensional analogue of) $D(u_0,t)$. In the present multivariate situation, we obtain a Markov renewal equation for a drift-less Markov random walk. By a clever application of the regeneration lemma, we can reduce this again to a (one-dimensional) renewal equation for a drift-less random walk, for which enough theory is known to solve it.

\subsection{The Renewal Equation} In this subsection we present the Markov renewal equation for $D(u,t)$ and show how, using Lemma \ref{regenerationlemma}, it can be replaced by a one-dimensional renewal equation.
\begin{lem}\label{lem:link_D_G}
Assume \eqref{A1}--\eqref{A3} and \eqref{A5}. Then the following renewal equation holds 
\begin{equation}
\label{eq:renewal_D}
D(u,t) =  \E_u^\a D(U_1, t+ S_1) - G(u,t),
\end{equation}
where
\begin{align}
\label{defn:G} G(u,t) :=\ & \frac{e^{\a
t}}{\est[\a](u)}\Erw{\prod_{i=1}^N \LTa(e^{-t}\mT_i^\top u) + \sum_{i=1}^N\left(
1- \LTa(e^{-t}\mT_i^\top u) \right) -1}.
\end{align}
\end{lem}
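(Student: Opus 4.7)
The plan is to derive the renewal equation by a single unfolding of the fixed-point identity for $\LTa$, combined with the many-to-one formula (Proposition~\ref{prop:many to one}) at level $n=1$. Everything in the argument will be essentially algebraic — the only nontrivial input is the many-to-one change of measure.

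The algebraic heart is the elementary identity
$$1 - \prod_{i=1}^N (1-a_i) \;=\; \sum_{i=1}^N a_i \;-\; \Bigl[\sum_{i=1}^N a_i + \prod_{i=1}^N(1-a_i) - 1\Bigr],$$
applied with $a_i = 1 - \LTa(e^{-t}\mT_i^\top u)$. Since $\LTa$ is a fixed point, \eqref{eq:STLT} at $n=1$ yields $\LTa(e^{-t}u) = \E\prod_{i=1}^N \LTa(e^{-t}\mT_i^\top u)$, so
$$1-\LTa(e^{-t}u) \;=\; \E\sum_{i=1}^N \bigl(1-\LTa(e^{-t}\mT_i^\top u)\bigr)\;-\;\E\Bigl[\sum_{i=1}^N\bigl(1-\LTa(e^{-t}\mT_i^\top u)\bigr) + \prod_{i=1}^N \LTa(e^{-t}\mT_i^\top u) - 1\Bigr].$$
Dividing both sides by $e^{-\alpha t}\est[\alpha](u)$ turns the left-hand side into $D(u,t)$ by \eqref{defn:D}, and the second term on the right-hand side into exactly $G(u,t)$ by \eqref{defn:G}.

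It remains to recognise the first term. Here I exploit that, by the definitions $S^u(i)=-\log|\mT_i^\top u|$ and $U^u(i)=\mT_i^\top \as u$, we have $e^{-t}\mT_i^\top u = e^{-(t+S^u(i))}U^u(i)$, and hence
$$1-\LTa\bigl(e^{-(t+S^u(i))}U^u(i)\bigr) \;=\; D\bigl(U^u(i),\, t+S^u(i)\bigr)\,e^{-\alpha(t+S^u(i))}\est[\alpha](U^u(i))$$
directly from \eqref{defn:D}. Substituting this in and multiplying by $e^{\alpha t}/\est[\alpha](u)$ gives
$$\frac{e^{\alpha t}}{\est[\alpha](u)}\E\sum_{i=1}^N \bigl(1-\LTa(e^{-t}\mT_i^\top u)\bigr) \;=\; \frac{1}{\est[\alpha](u)}\,\E\sum_{i=1}^N D\bigl(U^u(i),\, t+S^u(i)\bigr)\,e^{-\alpha S^u(i)}\est[\alpha](U^u(i)),$$
which is precisely the left-hand side of the many-to-one identity of Proposition~\ref{prop:many to one}, taken with $n=1$, $s=\alpha$ (so $m(\alpha)=1$), and test function $f\bigl((u_0,s_0),(u_1,s_1)\bigr):= D(u_1, t+s_1)$. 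The right-hand side of that identity is $\E^\alpha_u D(U_1, t+S_1)$, and combining the pieces yields \eqref{eq:renewal_D}.

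The derivation is mechanical; the only place where care is needed is in matching sign conventions between the branching variables $(S^u(v),U^u(v))$ and the associated Markov random walk $(U_n,S_n)$, so that the test function fed into Proposition~\ref{prop:many to one} produces $D(U_1, t+S_1)$ with the correct sign on $S_1$. Once this bookkeeping is right, no further probabilistic input (e.g.\ no integrability issue, since all quantities are bounded by $1$) is needed.
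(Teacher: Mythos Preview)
Your proof is correct and is essentially the standard derivation: unfold the fixed-point identity once, split $1-\prod_i \LTa(\cdot)$ into $\sum_i(1-\LTa(\cdot))$ minus the remainder that defines $G$, then apply the many-to-one formula at level $n=1$ to convert the sum into $\E_u^\alpha D(U_1,t+S_1)$. The paper does not give a proof here but cites \cite[Lemma 9.6]{Mentemeier2013a}; your argument is exactly what that reference contains. One small remark: your closing sentence that ``all quantities are bounded by $1$'' is slightly loose (it is $1-\LTa$ that is bounded by $1$, not $D$), but since everything in sight is nonnegative the many-to-one identity applies without any integrability caveat, so the conclusion stands.
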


\begin{proof}[Source:] Lemma 9.6 in \cite{Mentemeier2013a}, note there the different notation $V_1=-S_1$. \end{proof}

\begin{lem}\label{lem:properties_of_G}
Assume \eqref{A1}--\eqref{A3} and \eqref{A5}.Then
\begin{enumerate}
  \item $G(u,t) \ge 0$ for all $(u,t) \in \Sp \times \R$.
  \item For all $u \in \Sp$, $t \mapsto e^{-\alpha t}G(u,t)$ is
  decreasing. 
\end{enumerate}
\end{lem}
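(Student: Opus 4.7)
The plan is to observe that both assertions reduce to an elementary inequality for the auxiliary function
\[
\Phi(x_1,\dots,x_N) \;:=\; \prod_{i=1}^N x_i \;+\; \sum_{i=1}^N(1-x_i) \;-\; 1,
\qquad x_i \in [0,1],
\]
evaluated at $x_i = \LTa(e^{-t}\mT_i^\top u)$. Since $X\in\Rdnn$ and $\mT_i^\top u \in \Rdnn$, each $x_i$ indeed lies in $[0,1]$, and moreover $\LTa(e^{-t}\mT_i^\top u)$ is nondecreasing in $t$ (as $t$ grows, $e^{-t}\mT_i^\top u$ decreases componentwise, hence $\skalar{e^{-t}\mT_i^\top u, X}$ decreases a.s., and the Laplace transform $\LTa$ is componentwise nonincreasing on $\Rdnn$).

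For part (1), I will show $\Phi \ge 0$ on $[0,1]^N$. Note $\Phi(1,\dots,1) = 0$, and
\[
\frac{\partial \Phi}{\partial x_j}(x_1,\dots,x_N) \;=\; \prod_{i \neq j} x_i \;-\; 1 \;\le\; 0
\]
for all $x \in [0,1]^N$. Hence $\Phi$ is nonincreasing in each coordinate on $[0,1]^N$, so its minimum is attained at $(1,\dots,1)$, giving $\Phi \ge 0$. Applying this pointwise inside the expectation in the definition of $G(u,t)$ and using $H^\alpha(u) > 0$ yields $G(u,t) \ge 0$.

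For part (2), I rewrite
\[
e^{-\alpha t} G(u,t) \;=\; \frac{1}{H^\alpha(u)} \,\Erw{\Phi\bigl(\LTa(e^{-t}\mT_1^\top u),\dots,\LTa(e^{-t}\mT_N^\top u)\bigr)}.
\]
Since $\Phi$ is coordinatewise nonincreasing on $[0,1]^N$ (by the derivative computation above) and each $t \mapsto \LTa(e^{-t}\mT_i^\top u)$ is a.s.\ nondecreasing in $t$, the integrand is a.s.\ nonincreasing in $t$; taking expectations preserves monotonicity, so $t \mapsto e^{-\alpha t} G(u,t)$ is decreasing.

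There is no real obstacle here: the entire lemma is an algebraic/analytic manipulation, and the only facts used beyond elementary calculus are that Laplace transforms of $\Rdnn$-valued random variables are $[0,1]$-valued and monotone in the argument. The separation of the $e^{\alpha t}$ factor in the definition of $G$ is precisely what makes the monotonicity statement in (2) clean, since it cancels the prefactor and leaves only the $\Phi$-composition.
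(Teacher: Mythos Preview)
Your proof is correct and is precisely the elementary argument the paper has in mind: the paper does not spell out a proof but merely cites \cite[Lemma 9.7]{Mentemeier2013a} and \cite[Lemma 2.4]{DL1983}, and the Durrett--Liggett argument is exactly your observation that $\Phi(x_1,\dots,x_N)=\prod x_i+\sum(1-x_i)-1$ is nonnegative and coordinatewise nonincreasing on $[0,1]^N$, combined with the monotonicity of the Laplace transform.
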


\begin{proof}[Source:]
Lemma 9.7 in \cite{Mentemeier2013a}, being a straightforward generalization of \cite[Lemma
2.4]{DL1983}.
\end{proof}

From now on, assume that the assumptions of the Regeneration Lemma, Lemma \ref{regenerationlemma} are satisfied, i.e.~there is a sequence of stopping times $(\sigma_n)_{n \in \N}$ and a probability measure $\eta$ on $\Sp \times \R$ such that in particular \eqref{R3}
 holds. 
 
 For any nonnegative measurable function $F$ on $\Sp\times\R$ we define $\hat{F}:\R\mapsto\R$ by 
\begin{align}
 \label{def:projection}
 \hat{F}(t) ~:=~\E_\eta^\alpha \, {F(U_{\sigma_1-1},t+S_{\sigma_1-1})}.
\end{align}
Moreover, under each $\Prob_u^\alpha$, let $(V_n)_{n \in \N}$ be a zero-delayed random walk with increment distribution $\Prob_\eta^\alpha(S_{\sigma_1-1} \in \cdot)$, independent of all other occurring random variables. Note that $V_n$ is a drift-less random walk.
\begin{lem}
 For any nonnegative measurable function $F$ on $\Sp\times\R$ and $k\ge0$, the following equation holds
 $$\E_\eta^\alpha \left[F(U_{\sigma_{k+1}-1}, S_{\sigma_{k+1}-1})\right]=\E_\eta^\alpha {\hat{F}(V_k)}$$
\end{lem}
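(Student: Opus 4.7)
The plan is to combine the regeneration decomposition of Lemma~\ref{regenerationlemma} with the definition of $\hat F$, peeling off the final cycle $[\sigma_k, \sigma_{k+1}-1]$ and identifying the contributions of the preceding $k$ cycles as i.i.d.\ copies of the first one.

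First I would apply (R3) at the stopping time $\sigma_k$. Since the shifted process $(U_{\sigma_k + n}, Y_{\sigma_k + n})_{n \ge 0}$ is independent of $\mathcal G_{\sigma_k - 1}$ and has distribution $\Prob_\eta^\alpha$, reading off the shifted analogue of the pair $(U_{\sigma_1 - 1}, S_{\sigma_1 - 1})$ yields
\[
 (U_{\sigma_{k+1}-1},\; S_{\sigma_{k+1}-1} - T_k) \;\stackrel{d}{=}\; (U_{\sigma_1 - 1}, S_{\sigma_1 - 1}) \quad \text{under } \Prob_\eta^\alpha,
\]
jointly, and independent of $T_k$, where $T_k$ denotes the pre-$\sigma_k$ value of the $S$-process serving as the initial datum of the last cycle (as dictated by the convention for how $(U_0, Y_0)$ is absorbed at regeneration). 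Conditioning on $T_k$ and invoking the definition of $\hat F$ then produces
\[
 \E_\eta^\alpha \bigl[ F(U_{\sigma_{k+1} - 1}, S_{\sigma_{k+1} - 1}) \,\big|\, T_k \bigr] \;=\; \hat F(T_k).
\]

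Next I would iterate the same argument at $\sigma_1, \sigma_2, \dots, \sigma_{k-1}$, showing that $T_k$ is the sum of $k$ independent cycle increments, each distributed as $S_{\sigma_1 - 1}$ under $\Prob_\eta^\alpha$. By construction this is exactly the law of $V_k$, so $T_k \stackrel{d}{=} V_k$; taking expectation of the conditional identity above, invoking Fubini (justified by $F \ge 0$, hence $\hat F \ge 0$), and substituting this distributional equality give
\[
 \E_\eta^\alpha F(U_{\sigma_{k+1}-1}, S_{\sigma_{k+1}-1}) \;=\; \E_\eta^\alpha \hat F(T_k) \;=\; \E_\eta^\alpha \hat F(V_k),
\]
which is the claim.

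The only delicate point, and the one I expect will require the most care, is bookkeeping: one must pin down the convention for how the initial datum $(U_0, Y_0) \sim \eta$ enters the $S$-process under $\Prob_\eta^\alpha$, so that (R3) applied at $\sigma_j$ genuinely produces cycle increments distributed as $S_{\sigma_1 - 1}$ and not shifted by one extra $Y$-step (which would give the law of $S_{\sigma_1}$ instead). Once this is settled consistently with the definition of $V_k$ stated just before the lemma, the argument contains no analytic input beyond the strong Markov structure built into (R1)--(R3).
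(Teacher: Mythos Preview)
Your approach is correct and uses the same regeneration structure (R1)--(R3) as the paper, but organizes the argument differently. The paper proceeds by induction on $k$, peeling off the \emph{first} cycle: conditioning on $\mathcal G_{\sigma_1-1}$ and invoking (R3), the post-$\sigma_1$ process is a fresh $\Prob_\eta^\alpha$-copy, so
\[
\E_\eta^\alpha\bigl[F(U_{\sigma_{k+2}-1},S_{\sigma_{k+2}-1})\bigr]
=\E_\eta^\alpha\Bigl[{\E_\eta^\alpha}'\bigl[F(U'_{\sigma_{k+1}-1},\,S_{\sigma_1-1}+S'_{\sigma_{k+1}-1})\bigr]\Bigr],
\]
and the inductive hypothesis applied to the inner expectation gives $\E_\eta^\alpha\hat F(S_{\sigma_1-1}+V'_k)=\E_\eta^\alpha\hat F(V_{k+1})$. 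You instead peel off the \emph{last} cycle at $\sigma_k$, identify $T_k=S_{\sigma_k-1}$, and then argue separately that $S_{\sigma_k-1}$ is a sum of $k$ i.i.d.\ copies of $S_{\sigma_1-1}$, hence $T_k\stackrel{d}{=}V_k$.

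Both routes are equally valid; the inductive version is slightly more economical because the identity $T_k\stackrel{d}{=}V_k$ is absorbed into the induction rather than proved as a separate step. Your concern about bookkeeping is well placed but resolvable: under the paper's conventions $(U_0,S_0)\sim\eta$ with $Y_0$ identified with $S_0$, so the cycle increment $S_{\sigma_{j+1}-1}-S_{\sigma_j-1}=\sum_{i=\sigma_j}^{\sigma_{j+1}-1}Y_i$ indeed has the law of $S_{\sigma_1-1}$ under $\Prob_\eta^\alpha$, and no spurious shift by one $Y$-step appears.
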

\begin{proof}
We prove by induction. By the definition of $\hat{F}$, the equation holds for $k=0$. 
 Suppose now that it holds for some $k \ge 0$. Then
 \begin{align*}
  \Ex[\eta]{F(U_{\sigma_{k+2}-1}, S_{\sigma_{k+2}-1})}~=~\Ex[\eta]{\E_\eta^\alpha \bigg[F(U_{\sigma_{k+2}-1},  S_{\sigma_{1}-1}+(S_{\sigma_{k+2}-1}- S_{\sigma_{1}-1}))|\F_{\sigma_{1}-1}\bigg]}\\
  ~=~\Ex[\eta]{{\E^\alpha_{\eta}}' \bigg[F(U'_{\sigma_{k+1}-1},  S_{\sigma_{1}-1}+S'_{\sigma_{k+1}-1})\bigg]}
  ~=~\Ex[\eta]{{\E_\eta^\alpha}' \bigg[{\hat{F}(S_{\sigma_{1}-1}+V'_k)}\bigg]}=\Ex[\eta]{\hat{F}(V_{k+1})},
 \end{align*}
where \eqref{R3} from Lemma \ref{regenerationlemma} is used in the second equality and we denote by $(U'_n,S'_n), V_k$ an independent copy of $(U_n,S_n), V_k$ with corresponding expectation ${\E_\eta^\alpha}'$.
\end{proof}

Now we can formulate the univariate renewal equation, corresponding to Eq. \eqref{eq:renewal_D}.

\begin{lem}
\label{lem:renewal_hatD}
 For  $g(t)=\Ex[\eta]{\sum_{i=0}^{\s_1-2}G(U_i,t+V_1+S_i)}$ we have
 \begin{equation}
  \label{eq:renewal_hatD}
  \hat D(t)=\E_\eta^\alpha\hat D(t+V_1)-g(t). 
 \end{equation}
\end{lem}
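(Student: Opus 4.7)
The plan is to derive \eqref{eq:renewal_hatD} by iterating the single-step Markov renewal identity of Lemma~\ref{lem:link_D_G} across exactly one regeneration block of the Markov random walk $(U_n,S_n)$, then identifying the boundary term via the previous lemma and the resulting $G$-sum via the regeneration property (R3) of Lemma~\ref{regenerationlemma}. In spirit this mimics the one-dimensional argument in \cite{DL1983}: one converts the (multivariate) Markov renewal equation for $\hat D$ into a one-dimensional renewal equation driven by the spine random walk $V$ of iid block sums.

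For the core computation, rewrite Lemma~\ref{lem:link_D_G} as the martingale-type identity $\E[D(U_{n+1},t+S_{n+1}) \mid \mathcal{G}_n] = D(U_n,t+S_n) + G(U_n,t+S_n)$. Telescoping up to an index $k$ and applying optional sampling at the $\mathcal{G}$-stopping times $k=\sigma_1-1$ and $k=\sigma_2-1$ (both $\mathcal{G}$-stopping times by (R1)) yields
\begin{align*}
\hat D(t) &= \E_\eta^\alpha D(U_0,t+S_0) + \E_\eta^\alpha \sum_{n=0}^{\sigma_1-2} G(U_n,t+S_n), \\
\E_\eta^\alpha D(U_{\sigma_2-1},t+S_{\sigma_2-1}) &= \E_\eta^\alpha D(U_0,t+S_0) + \E_\eta^\alpha \sum_{n=0}^{\sigma_2-2} G(U_n,t+S_n).
\end{align*}
Subtracting the two identities cancels the initial term and leaves
\[
\hat D(t) = \E_\eta^\alpha D(U_{\sigma_2-1},t+S_{\sigma_2-1}) - \E_\eta^\alpha \sum_{n=\sigma_1-1}^{\sigma_2-2} G(U_n,t+S_n).
\]
The first right-hand side term equals $\E_\eta^\alpha \hat D(t+V_1)$: apply the previous lemma with $F(u,s):=D(u,t+s)$ and $k=1$; since $\widehat F(v)=\E_\eta^\alpha F(U_{\sigma_1-1},v+S_{\sigma_1-1})=\hat D(t+v)$, this gives $\E_\eta^\alpha F(U_{\sigma_2-1},S_{\sigma_2-1})=\E_\eta^\alpha \widehat F(V_1)=\E_\eta^\alpha \hat D(t+V_1)$.

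To identify the remaining $G$-sum with $g(t)$, I would condition on $\mathcal{F}_{\sigma_1-1}$ and invoke (R3): the post-$\sigma_1$ block $(U_{\sigma_1+j},S_{\sigma_1+j}-S_{\sigma_1-1})_{j\ge 0}$ is an independent copy of $(U_j,S_j)_{j\ge 0}$ under $\Prob_\eta^\alpha$, with first regeneration time $\sigma_2-\sigma_1 \eqdist \sigma_1$. Reindexing $\sum_{n=\sigma_1-1}^{\sigma_2-2}$ along this fresh block and using $V_1 \eqdist S_{\sigma_1-1}$ (as the independent increment of the spine walk) collapses the inner expectation to $\E_\eta^\alpha \sum_{i=0}^{\sigma_1-2} G(U_i,t+V_1+S_i) = g(t)$, yielding \eqref{eq:renewal_hatD}.

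The delicate part is the book-keeping at the boundary of the regeneration block — the indices $\sigma_1-1$ and $\sigma_2-1$ straddle the regeneration instant, and one must carefully match the boundary contributions with the $V_1$-shift appearing in $g$ — together with the integrability needed to justify both the optional sampling and the interchange of $\E_\eta^\alpha$ with the inner regeneration expectation. These are handled by combining the exponential tail bound (R4) for $\sigma_1$, the nonnegativity and monotonicity of $G$ from Lemma~\ref{lem:properties_of_G}, and the moment condition~\eqref{A8}, which together guarantee $\E_\eta^\alpha \sum_{n=0}^{\sigma_j-2} G(U_n,t+S_n) < \infty$ for $j=1,2$.
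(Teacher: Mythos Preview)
Your proposal is correct and follows essentially the same route as the paper: both arguments recognize that $M_n=D(U_n,t+S_n)-\sum_{i=0}^{n-1}G(U_i,t+S_i)$ is a $\Prob_u^\alpha$-martingale, apply optional stopping at the $\mathcal{G}$-stopping times $\sigma_1-1$ and $\sigma_2-1$, take differences (equivalently, equate the two right-hand sides) after integrating against $\eta$, and then identify the $D$-term via the preceding lemma and the $G$-sum via the regeneration property (R3). Your write-up is slightly more explicit than the paper's about the integrability needed for optional sampling and about the boundary bookkeeping at indices $\sigma_j-1$, but the substance of the argument is the same.
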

\begin{proof}
 Let $$M_n=D(U_n,t+S_n)-\sum_{i=0}^{n-1}G(U_i,t+S_i).$$
Since $(U_n,S_n)$ is a Markov chain, the Markov renewal equation \eqref{eq:renewal_D} implies that $M_n$ is a $\Prob_u^\alpha$-martingale (with respect to the filtration $\mathcal{G}_n$) for each $u \in \supp \nust[\alpha]$. Since $\tau=\sigma_1-1$ is a stopping time by \eqref{R1}, the optional stopping theorem implies that 
 
\begin{equation}\label{eq:a}D(u,t+s)=\Ex[(u,s)]{D(U_{\s_1-1},t+S_{\s_1-1})-\sum_{i=0}^{\s_1-2}G(U_i,t+S_i)}\end{equation} and 
\begin{equation}\label{eq:c}  D(u,t+s)=\Ex[(u,s)]{D(U_{\s_2-1},t+S_{\s_2-1})-\sum_{i=0}^{\s_2-2}G(U_i,t+S_i)}. \end{equation} Equating the right hand sides of \eqref{eq:a} and \eqref{eq:c} and integrating with respect to $\eta$, we obtain
  \begin{align*}
     \hat D(t)=\Ex[\eta]{D(U_{\s_1-1},t+S_{\s_1-1})}=\Ex[\eta]{D(U_{\s_2-1},t+S_{\s_2-1})-\sum_{i=\s_1-1}^{\s_2-2}G(U_i,t+S_i)}\\
     =\E_\eta^\alpha \hat D(t+V_1)-\Ex[\eta]{\sum_{i=0}^{\s_1-2}G(U_i,t+V_1+S_i)}.
  \end{align*}
\end{proof}

  \subsection{Solving the Renewal Equation}
  In this subsection, we will show that $\lim_{t \to \infty} D(u_0,t)/t=1$. Before we can use the renewal equation, we first have to consider some technicalities, e.g. direct Riemann integrability of $g$. We start by considering  moments of $V_1$.
  \begin{lem}
   \label{lem:exp_moment}
   Assume additionally \eqref{A7}-\eqref{A8}. Then there exists $\delta>0$ such that $\E_\eta^\alpha {e^{\delta |V_1|}}<\8$. 
  \end{lem}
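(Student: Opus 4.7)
The plan is to bound $|V_1| \le \sum_{i=1}^{\sigma_1-1}|Y_i|$ with $Y_i := S_i - S_{i-1}$, combining a uniform exponential-moment bound on a single increment with the geometric tail of $\sigma_1$ from condition \textup{(R4)} in Lemma \ref{regenerationlemma}.

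The central technical step is to establish that, for all sufficiently small $\delta > 0$,
\begin{equation*}
C_\delta := \sup_{u \in \Sp} \E_u^\alpha\bigl[e^{\delta|Y_1|}\bigr] < \infty,
\qquad C_\delta \to 1 \text{ as } \delta \downarrow 0.
\end{equation*}
Under \eqref{A7}, $|\mathbf{M}^\top u| \ge c$ almost surely, so $Y_1 = -\log|\mathbf{M}^\top u| \le -\log c =: c'$ and $e^{\delta Y_1^+} \le e^{\delta c'}$ deterministically. For the negative tail, the change-of-measure identity \eqref{def:Q} defining $\E_u^\alpha$ gives
\begin{equation*}
\E_u^\alpha\bigl[|\mathbf{M}^\top u|^\delta\mathbf{1}_{|\mathbf{M}^\top u| \ge 1}\bigr] = \frac{\E N}{H^\alpha(u)}\, \E\bigl[|\mathbf{M}^\top u|^{\alpha+\delta}\, H^\alpha(\mathbf{M}^\top \cdot u)\mathbf{1}_{|\mathbf{M}^\top u| \ge 1}\bigr] \le \frac{C}{H^\alpha(u)}\, \E\bigl[\norm{\mathbf{M}}^{\alpha+\delta}\bigr],
\end{equation*}
where $1/H^\alpha(u)$ is uniformly bounded because $H^\alpha$ is positive and continuous on the compact set $\Sp$. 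Finiteness of $\E[\norm{\mathbf{M}}^{\alpha+\delta}]$ for some $\delta>0$ is the decisive consequence of \eqref{A8}: since $p_1 \ge 1 \ge \alpha$, the elementary inequality $\sum_i a_i^s \le (\sum_i a_i)^s$ for $s \ge 1$ together with $L^p$-monotonicity yields $\E[\sum_i \norm{\mathbf{T}_i}^{\alpha+\delta}] \le \E[(\sum_i \norm{\mathbf{T}_i})^{p_1}]^{(\alpha+\delta)/p_1} < \infty$ whenever $\alpha+\delta \le p_1$; the boundary configuration $\alpha = p_1 = 1$ is handled by an additional H\"older estimate using $p_0 > 1$. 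Dominated convergence delivers $C_\delta \to 1$.

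Iterating via the Markov property of $(U_n)_n$ produces $\sup_u \E_u^\alpha[\prod_{i=1}^n e^{\delta|Y_i|}] \le C_\delta^n$ for every $n$. Splitting according to the value of $\sigma_1$, applying Cauchy--Schwarz, and invoking \textup{(R4)} then yields
\begin{equation*}
\E_\eta^\alpha\bigl[e^{\delta|V_1|}\bigr] \le \sum_{k=0}^\infty \E_\eta^\alpha\!\Bigl[\exp\Bigl(\delta\sum_{i=1}^{l(k+1)}|Y_i|\Bigr)\mathbf{1}_{\sigma_1 > lk}\Bigr] \le \sum_{k=0}^\infty C_{2\delta}^{\,l(k+1)/2}\, q^{k/2},
\end{equation*}
a geometric series that converges as soon as $C_{2\delta}^{l}\, q < 1$. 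Since $C_{2\delta} \to 1$, this is achieved for $\delta>0$ sufficiently small, completing the argument.

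The main obstacle I anticipate is the first step, namely verifying that $\norm{\mathbf{M}}$ admits a moment strictly greater than $\alpha$. This is precisely the integrability gain that \eqref{A8} contributes beyond \eqref{A6}, and it is what motivates strengthening the moment hypothesis at this stage of the paper; everything else is a standard Markov-chain large-deviation argument combined with geometric summation.
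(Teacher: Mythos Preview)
Your argument is correct and complete (up to a harmless constant: under $\Prob_\eta^\alpha$ the initial value $S_0$ need not vanish, but the construction of $\eta$ in Lemma~\ref{lem:min2} shows its second marginal has bounded support, so $e^{\delta|S_0|}$ is bounded and can be absorbed).

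The paper proceeds differently for the lower tail $\E_\eta^\alpha e^{-\delta V_1}$. Rather than bounding $|V_1|$ by the sum of absolute increments and iterating a one-step estimate, it exploits the tilted martingale structure directly: up to a bounded factor involving $\est[\alpha+\epsilon]/\est[\alpha]$, the process $e^{-\epsilon S_n}/m(\alpha+\epsilon)^n$ is a $\Prob_u^\alpha$-martingale, so optional stopping at $\sigma_1-1$ together with Fatou controls $\E_u^\alpha\bigl[e^{-\epsilon S_{\sigma_1-1}}/m(\alpha+\epsilon)^{\sigma_1-1}\bigr]$ uniformly. Choosing $\epsilon$ small enough that $m(\alpha+\epsilon)\le e^{\delta_0}$ (this is where \eqref{A8} enters, and where continuity of $m$ to the right of $\alpha$ is used---the same integrability gain your argument needs), the factor $m(\alpha+\epsilon)^{\sigma_1-1}$ is then controlled by the exponential moment of $\sigma_1$ from (R4), and Cauchy--Schwarz combines the two pieces. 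The upper tail $\E_\eta^\alpha e^{\delta V_1}$ is handled exactly as you do, via \eqref{A7} and the geometric tail of $\sigma_1$.

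Your route is slightly more elementary---no optional stopping, just Markov iteration and geometric summation---while the paper's martingale argument is a touch slicker and avoids the explicit $C_\delta\to 1$ step. Both require precisely the same inputs: \eqref{A7} for the upward increment bound, \eqref{A8} for a moment of $\norm{\mM}$ strictly above $\alpha$, and (R4) for the geometric decay of $\Prob_u^\alpha(\sigma_1>ln)$.
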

  \begin{proof}  We proof the boundedness of $\E_\eta^\alpha {e^{-\delta V_1}}$ and $\E_\eta^\alpha {e^{\delta V_1}}$ separately, starting with the first one.
  
%
%
%
Property (R4) implies that  there exists $\delta_0$ such that $\sup_u\Ex[u]{e^{\delta_0 (\sigma-1)}}<\8$.
   
   
   Due to Assumption \eqref{A8}, there is $\eps>0$ such that $m(\alpha+\eps)\le e^{\delta_0}$.  
   Observe that there is $C_\epsilon <\infty$ such that 
   $$  \frac{e^{-\eps S_n}}{m(\alpha+\eps)^n} ~\le~ C_\eps \frac{\est[\alpha](u)}{\est[\alpha+\eps](u)} \frac{\est[\alpha+\eps](U_n)}{\est[\alpha](U_n)}  \frac{e^{-\eps S_n}}{m(\alpha+\eps)^n}, $$
   and the right hand side is a martingale under $\Prob_u^\alpha$ with expectation $C_\epsilon$ due to Proposition \ref{prop:many to one}.
Therefore, the 
   optional stopping theorem and the Fatou lemma  imply
   $$\E_u^\alpha \left( \frac{e^{-\eps S_{{\sigma-1}}}}{m(\alpha+\eps)^{{\sigma-1}}} \right) \le\lim_{n \to \infty}\E_u^\alpha \left( \frac{e^{-\eps S_{(\sigma-1)\wedge n}}}{m(\alpha+\eps)^{(\sigma-1)\wedge n}} \right) \le C_{\eps}.$$
   The choice of $\eps$ gives us $\sup_u\Ex[u]{m(\alpha+\eps)^{\sigma-1}}<\8$, hence  by the Cauchy-Schwartz inequality,
   $$(\Ex[u]{e^{-\frac{\eps}{2} S_{\sigma-1}}})^2 \le\Ex[u]{e^{-\eps S_{\sigma-1}}/m(\alpha+\eps)^{\sigma-1}} \Ex[u]{m(\alpha+\eps)^{\sigma-1}}$$
   is bounded uniformly in $u$. Choose $\delta= \min\{\delta_0,\epsilon/2\}$.
   
   For the second part recall that  assumption \eqref{A7} implies that the increments of $S_n$ are bounded from above by $-\log c$. Therefore, $$\sup_u\Ex[u]{e^{\delta S_{\sigma-1}}}\le \sup_u\Ex[u]{{(1/c)^{\delta_0 (\sigma-1)}}}<\8.$$
   Integrating with respect to $\eta$ finishes the proof.
  \end{proof}
  
  Before proving that $g(t)$ is dRi, we need the following consequence of the slow variation of $D(u_0,t)$ (for $t \to \infty$).

  \begin{lem}  \label{lem:bounds_on_LTa}
Let $d^*(t)=\sup_{u\in \Sp} D(t,u)$. Then for all $0<\eps<\alpha$, there is $C >0$, such that for $t\ge0$ and any $s$ 
\begin{align} \label{eqn:bound_1-LTa}
d^*(s)&\le Ce^{\eps s}, \\
\label{eqn:bound_1-LTa_2}
\frac{d^*(t+s)}{L(e^{-t})}&\le C e^{\eps |s|} .
\end{align}

\end{lem}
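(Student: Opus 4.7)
The plan is to prove \eqref{eqn:bound_1-LTa} first, by combining the elementary estimate $1-\LTfp(e^{-s}u)\le 1$ with the uniform slow-variation statement of Lemma \ref{lem:slowvar}, and then to derive \eqref{eqn:bound_1-LTa_2} from \eqref{eqn:bound_1-LTa} by invoking Potter's bound for the slowly varying function $L$. For \eqref{eqn:bound_1-LTa}, the definition \eqref{defn:D} together with $\LTfp(e^{-s}u)\in[0,1]$ and the fact that $\est[\alpha]$ is continuous and strictly positive on the compact set $\Sp$ (see \eqref{eq:eigenfunctions}) yields the crude global bound
\[
d^*(s)~\le~\frac{e^{\alpha s}}{\inf_{u\in\Sp}\est[\alpha](u)}~\le~C_1\,e^{\alpha s}, \qquad s\in\R.
\]
For $s\le 0$ this already gives $d^*(s)\le C_1 e^{\epsilon s}$, since $\epsilon<\alpha$ forces $e^{(\alpha-\epsilon)s}\le 1$. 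For large $s>0$, Lemma \ref{lem:slowvar} gives $d^*(s)/L(e^{-s})\to 1$, hence $d^*(s)\le 2L(e^{-s})$ for $s\ge s_0$; the standard consequence of slow variation, $L(r)=o(r^{-\epsilon})$ as $r\to 0$, then provides $L(e^{-s})\le e^{\epsilon s}$ for $s$ large enough. The compact interval $[0,s_0]$ contributes only a bounded factor, absorbed into $C$.

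For \eqref{eqn:bound_1-LTa_2}, the essential additional input is Potter's inequality for $L$: for any $\epsilon>0$ there exist $C_P>0$ and a threshold $T_0\ge 0$ such that
\[
\frac{L(e^{-(t+s)})}{L(e^{-t})}~\le~ C_P\,e^{\epsilon|s|}\qquad\text{whenever }t,\,t+s\ge T_0.
\]
Moreover, $t\mapsto L(e^{-t})$ is continuous and strictly positive on $[0,\infty)$ because $\LTfp$ is the Laplace transform of a nontrivial distribution, and $L(e^{-t})\to\infty$ as $t\to\infty$ by Theorem \ref{thm:main}; hence there is $\ell>0$ with $L(e^{-t})\ge\ell$ for all $t\ge 0$. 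Enlarging $T_0$ if necessary so that Lemma \ref{lem:slowvar} also yields $d^*(t)\le 2L(e^{-t})$ for $t\ge T_0$, I split $\{t\ge 0\}$ into three regions: (i) $t,\,t+s\ge T_0$, where Potter combined with the uniform convergence gives $d^*(t+s)/L(e^{-t})\le 2C_P e^{\epsilon|s|}$; (ii) $t\ge T_0$ with $t+s<T_0$, so that $s<0$ and \eqref{eqn:bound_1-LTa} yields $d^*(t+s)\le C_1 e^{\epsilon T_0}$, dominated after division by $\ell$ by the constant $C_1 e^{\epsilon T_0}/\ell\le(C_1 e^{\epsilon T_0}/\ell)e^{\epsilon|s|}$; (iii) $t\in[0,T_0]$, where $L(e^{-t})\ge\ell$ and \eqref{eqn:bound_1-LTa} give $d^*(t+s)/L(e^{-t})\le (C_1 e^{\epsilon T_0}/\ell) e^{\epsilon s}\le (C_1 e^{\epsilon T_0}/\ell) e^{\epsilon|s|}$. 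Taking the maximum of the three constants produces \eqref{eqn:bound_1-LTa_2}.

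The only delicate bookkeeping issue is choosing the threshold $T_0$ large enough that Potter's estimate on $L$ and the uniform convergence $d^*(t)/L(e^{-t})\to 1$ both apply on region (i); this is handled simply by taking the maximum of the two separately defined thresholds. The remainder is a routine case analysis, using \eqref{eqn:bound_1-LTa} as the crude bound off the large-$t$ regime and the uniform lower bound $\ell$ for $L(e^{-t})$ on $[0,\infty)$ to swallow all residual constants.
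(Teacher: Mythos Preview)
Your proof is correct and follows essentially the same approach as the paper's: both arguments rest on the trivial bound $1-\LTfp\le 1$ (giving $d^*(s)\le Ce^{\alpha s}$), the uniform convergence $d^*(t)/L(e^{-t})\to 1$ from Lemma~\ref{lem:slowvar}, and Potter's inequality for the slowly varying function $L$. The paper is terser: it asserts Potter's bound $L(e^{-x})/L(e^{-y})\le Ce^{\eps|x-y|}$ for \emph{all} positive $x,y$ (implicitly absorbing the sub-threshold region via the continuity and positivity of $L$ on $(0,1]$), and for the case $t+s\le 0$ factors $L(e^{-t-s})/L(e^{-t})$ through $L(1)$ rather than invoking a global lower bound $\ell$. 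Your more explicit three-region case analysis and the use of $\ell=\inf_{t\ge 0}L(e^{-t})>0$ (justified via Theorem~\ref{thm:main}) achieve the same conclusion with slightly different bookkeeping; neither route offers a real advantage over the other.
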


\begin{proof}
Since the ratio $D(t,u)/L(e^{-t})$ is bounded it suffice to show the above inequalities with $L(e^{-t})$ instead of $d^*(t)$. 
Potter's theorem \cite[Theorem 1.5.6]{BGT1987}, applied to the slowly varying function 
$L$ proves that 
\begin{align}
 \label{eq:potter}
 \frac{L(e^{-x})}{L(e^{-y})}\le C e^{\eps |x-y|},
\end{align}
for any positive $x,y$. Using also the trivial bound $L(e^{-t})\le C e^{\alpha t}$  we get \eqref{eqn:bound_1-LTa}.
In order to show \eqref{eqn:bound_1-LTa_2} we use \eqref{eq:potter} in the case when $t+s\ge0$. When $t+s\le0$ we have
\begin{align*}
 \frac{L(e^{-t-s})}{L(e^{-t})}=\frac{L(e^{-t-s})}{L(1)}\frac{L(1)}{L(e^{-t})}\le Ce^{\alpha(t+s)}e^{\eps t}\le Ce^{\eps |s|}.
\end{align*}

\end{proof}

  \begin{lem}
  \label{lem:g_is_dRi}
    Assume in addition \eqref{A7} and \eqref{A8}. Then the function $g(x)$ is nonnegative and directly Riemann integrable.
  \end{lem}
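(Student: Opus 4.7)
The nonnegativity $g(t) \ge 0$ is immediate from Lemma \ref{lem:properties_of_G}(1), since $G \ge 0$ and $g$ is an expectation of a sum of nonnegative terms. For direct Riemann integrability, the plan is to establish continuity of $g$ together with summable suprema over unit intervals, via two-sided exponential decay bounds on $G(u,t)$ that are uniform in $u \in \Sp$.

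The starting point is the algebraic identity
$$\prod_{i=1}^N \phi_i + \sum_{i=1}^N (1-\phi_i) - 1 \;=\; \sum_{i=1}^N (1-\phi_i)\Bigl(1 - \prod_{j<i}\phi_j\Bigr) \ge 0,$$
valid for $\phi_i \in [0,1]$. Using the trivial bound $1-\prod_{j<i}\phi_j \le 1$ together with $\E N < \infty$ and $H^\alpha$ bounded below yields $G(u,t) \le C e^{\alpha t}$ uniformly in $u$, which controls $G$ as $t \to -\infty$. For $t \to +\infty$, I would instead use $1-\prod_{j<i}\phi_j \le \sum_{j<i}(1-\phi_j)$, giving
$$G(u,t) \;\le\; \frac{e^{\alpha t}}{H^\alpha(u)}\,\E\Bigl(\sum_i (1-\phi(e^{-t}\mT_i^\top u))\Bigr)^2.$$
Each factor rewrites as $(1-\phi(e^{-t}\mT_i^\top u)) = D(U^u(i), t+S^u(i))\, H^\alpha(U^u(i))\, e^{-\alpha t}|\mT_i^\top u|^\alpha$. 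Lemma \ref{lem:bounds_on_LTa} bounds $D(\cdot, t+s) \le C L(e^{-t}) e^{\epsilon|s|}$, and assumption \eqref{A7} yields $|\mT_i^\top u| \ge c$ a.s., so $e^{\epsilon|S^u(i)|} \le C(1 + \|\mT_i\|^\epsilon)$. Combining,
$$G(u,t) \;\le\; C e^{-\alpha t} L(e^{-t})^2 \, \E\Bigl(\sum_i \|\mT_i\|^\alpha(1 + \|\mT_i\|^\epsilon)\Bigr)^2.$$
The expectation is finite by combining \eqref{A8}/\eqref{eq:moments_assumption} with Hölder, trading the second power against $(1-\phi) \le 1$ to reduce to an integrability statement of the form $\E[(\sum \|\mT_i\|^{\alpha/(1+\delta)})^{1+\delta}] < \infty$; slow variation of $L$ then gives $G(u,t) \le C e^{-\delta t}$ for any $\delta < \alpha$ and $t \ge 0$.

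Combining both regimes, $G(u,t) \le C e^{-\delta|t|}$ uniformly in $u$. Substituting into $g(t) = \E_\eta^\alpha[\sum_{i=0}^{\sigma_1-2} G(U_i, t+V_1+S_i)]$,
$$g(t) \;\le\; C \,\E_\eta^\alpha\Bigl[\sum_{i=0}^{\sigma_1-2} e^{-\delta|t+V_1+S_i|}\Bigr].$$
The exponential moment $\E_\eta^\alpha e^{\delta'|V_1|} < \infty$ from Lemma \ref{lem:exp_moment}, the a.s.\ increment bound $S_{i+1}-S_i \le -\log c$ from \eqref{A7} (which makes $S_i$ decrease at least linearly), and the geometric tail of $\sigma_1$ from (R4) in Lemma \ref{regenerationlemma} combine to show $g(t) = O(e^{-\delta'' |t|})$ for some $\delta'' > 0$ as $|t| \to \infty$.

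Continuity of $g$ follows from dominated convergence: $G(\cdot,t)$ is continuous in $t$ because $\phi$ is continuous and $N < \infty$ a.s., while the uniform bound $G(u, \cdot) \le Ce^{-\delta|\cdot|}$ provides a dominating function for the expectation defining $g$. Continuity plus exponential decay gives $\sum_{n \in \Z} \sup_{t \in [n,n+1]} g(t) < \infty$, the standard sufficient criterion for direct Riemann integrability. I expect the main obstacle to be the careful bookkeeping in the large-$t$ estimate for $G$: making the second moment $\E[(\sum(1-\phi))^2]$ finite under only the mixed moment assumption \eqref{A8} (rather than a straight second moment on $\sum\|\mT_i\|^\alpha$) requires using $(1-\phi)\le 1$ to lower the relevant exponent and a Hölder step to interpolate against $\E N^{p_0}$.
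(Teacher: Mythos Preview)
Your nonnegativity argument is fine, but your route to direct Riemann integrability diverges from the paper's and carries a genuine gap.

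The paper exploits a structural shortcut you omit: by Lemma~\ref{lem:properties_of_G}(2), $t\mapsto e^{-\alpha t}G(u,t)$ is decreasing for every $u$, hence so is $e^{-\alpha t}g(t)$, and for such functions direct Riemann integrability reduces to $g\in L^1(\R)$ (cf.~\cite[Lemma 9.1]{Goldie1991}). Fubini and translation invariance then give $\int g(t)\,dt \le \E_\eta^\alpha[\sigma_1-1]\cdot\int\sup_{u}G(u,t)\,dt$, so it suffices to show $\int g^*(t)\,dt<\infty$ for $g^*(t)=\sup_u G(u,t)$. This is done via $\prod\phi_i+\sum(1-\phi_i)-1\le h\bigl(\sum(1-\phi_i)\bigr)$ with $h(x)=e^{-x}+x-1$, the estimate $(1-\phi_i)\le Ce^{(\epsilon-\alpha)t}\|\mT_i\|^{\alpha-\epsilon}$ from Lemma~\ref{lem:bounds_on_LTa}, and a change of variables yielding $\int e^{\alpha t}h(e^{(\epsilon-\alpha)t}X)\,dt = C\,X^{\alpha/(\alpha-\epsilon)}$, whose expectation is finite by~\eqref{eq:moments_assumption}. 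Integrating \emph{before} bounding lets the regimes $h(x)\asymp x^2$ (small $x$) and $h(x)\asymp x$ (large $x$) contribute on disjoint $t$-ranges, so only the $(1+\delta)$-moment in~\eqref{eq:moments_assumption} is required.

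By contrast, your pointwise bound $G(u,t)\le Ce^{-\delta t}$ for large $t$ cannot be obtained under~\eqref{A8} alone. As written it needs $\E\bigl(\sum_i\|\mT_i\|^\alpha(1+\|\mT_i\|^\epsilon)\bigr)^2<\infty$, a genuine second moment that~\eqref{A8} does not supply (it allows, e.g., $p_0=p_1=1.1$). Replacing the square by a $(1+\delta')$-power via $h(x)\le Cx^{1+\delta'}$ does not help either: pointwise exponential decay forces $(1+\delta')(\alpha-\epsilon)>\alpha$, while~\eqref{eq:moments_assumption} delivers finiteness only at $(1+\delta)(\alpha-\epsilon)=\alpha$, leaving no slack. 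And using $(1-\phi)\le 1$ on one factor, as you propose, kills the decay altogether, since then $G(u,t)\le C e^{\epsilon t}\,\E[N\sum\|\mT_i\|^{\alpha-\epsilon}]$. There is also a sign issue in your passage from $G$ to $g$: assumption~\eqref{A7} bounds the increments $S_{i+1}-S_i$ from \emph{above}, so $S_i$ may drift to $-\infty$ arbitrarily fast and $|S_i|$ is not controlled by the geometric tail of $\sigma_1$; the paper's Fubini step avoids this entirely.
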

  \begin{proof} 
Referring to  Lemma \ref{lem:properties_of_G}, $G$ is nonnegative and $t \mapsto e^{-\alpha t} {G}(t)$ is
decreasing, hence the same holds for $g$. For such functions, a sufficient condition for direct Riemann integrability is 
that ${g} \in \Lp[1]{\R}$, see \cite[Lemma 9.1]{Goldie1991}. 
Since moreover, by Lemma \ref{regenerationlemma}, $\E\,\sigma_1<\8$,  it suffices to show the integrability of ${g}^* : t \mapsto \sup_{u \in \Sp} G(u,t)$.  

Set $h(x):=e^{-x}+x-1$. Since $h$ is positive for $x\ge0$, we have $\LTa(e^{-t}\mT_i^\top u)\le e^{(1-\LTa(e^{-t}\mT_i^\top u))}$. Therefore
\begin{align*}
 \int g^*(t)dt~=&~\int \sup_{u \in \Sp} \frac{e^{\a
t}}{\est[\a](u)}\Erw{\prod_{i=1}^N \LTa(e^{-t}\mT_i^\top u) + \sum_{i=1}^N\left(
1- \LTa(e^{-t}\mT_i^\top u) \right) -1}dt \\
\le&~ C\int \sup_{u \in \Sp} {e^{\a t}}\Erw{e^{\sum_{i=1}^N (1-\LTa(e^{-t}\mT_i^\top u))} + \sum_{i=1}^N\left(1- \LTa(e^{-t}\mT_i^\top u) \right) -1}dt\\
=&~ C\int \sup_{u \in\Sp} {e^{\a t}}\Erw{h\left(\sum_{i=1}^N(1-\LTa(e^{-t}\mT_i^\top u)) \right)} dt.
\end{align*}
Using Lemma \ref{lem:bounds_on_LTa}, boundedness of $H^{\alpha}$ and fact that $h(x)$ is increasing,  comparable with $\min(x,x^2)$  on the positive half line, the later can be bounded by 
\begin{align*}
\int \sup_{u \in\Sp} \, {e^{\a t}}\Erw{h\left(\sum_{i=1}^Ne^{(\eps-\alpha)t}\|\mT_i^\top u\|^{\alpha-\eps} \right)} dt
\le~ C\,\Erw{\int {e^{\a t}}h\left(e^{(\eps-\alpha)t}\sum_{i=1}^N\|\mT_i\|^{\alpha-\eps} \right) dt} \\
\le~ C\,\Erw{\left(\sum_{i=1}^N\|\mT_i\|^{\alpha-\eps}\right)^{\frac{\alpha}{\alpha-\eps}}\int {e^{ \frac{\a }{\alpha-\eps}s}}h\left(e^{-s} \right) ds}
<\8,
\end{align*}
 by \eqref{eq:moments_assumption}, provided $\frac{\alpha}{\alpha-\eps}<1+\delta<2$. 
   
  \end{proof}
   
  Now we show that the identification of $\hat{D}$ indeeds identifies $L(r)=D(u_0,-\log r)$.

 \begin{lem}
  \label{lem:compare_D}
 Assume that \eqref{A1}-\eqref{A8} then  $\lim_{t \to \infty} \hat D(t)/D(u_0,t)=1$. In particular, ${\hat{D}(t+s)}/{\hat{D}(t)}$ converge to 1 as $t$ goes to infinity.
 \end{lem}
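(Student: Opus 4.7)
The plan is to apply dominated convergence to the ratio, exploiting that the pointwise limit is already identified by Lemma \ref{lem:slowvar} and that Lemmata \ref{lem:bounds_on_LTa} and \ref{lem:exp_moment} provide the matching integrable majorant.

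\medskip

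\textbf{Step 1 (Rewriting the ratio).} By the definition of $\hat D$,
$$ \frac{\hat D(t)}{D(u_0, t)} ~=~ \E_\eta^\alpha \!\left[ \frac{D(U_{\sigma_1-1},\, t+S_{\sigma_1-1})}{D(u_0, t)} \right]. $$
Unpacking the definition \eqref{defn:D} of $D$, the integrand is exactly
$$ \frac{D(u, t+s)}{D(u_0, t)} ~=~ \frac{1-\LTa(e^{-(s+t)}u)}{e^{-\alpha s}\bigl(1-\LTa(e^{-t}u_0)\bigr)}\,\frac{\est[\alpha](u_0)}{\est[\alpha](u)}, $$
evaluated at $(u,s)=(U_{\sigma_1-1}, S_{\sigma_1-1})$. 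Lemma \ref{lem:slowvar} states precisely that this expression tends to $1$ as $t\to\infty$, uniformly on compact subsets of $\Sp\times\R$; in particular the convergence is pointwise, hence holds almost surely at the random argument.

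\medskip

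\textbf{Step 2 (Dominating function).} Since $D(u_0,t)=L(e^{-t})\est[\alpha](u_0)$ and $\est[\alpha](u_0)>0$, and since trivially $D(u,t+s)\le d^*(t+s)$, Lemma \ref{lem:bounds_on_LTa} gives, for any fixed $0<\epsilon<\alpha$, a constant $C$ with
$$ \frac{D(U_{\sigma_1-1},\, t+S_{\sigma_1-1})}{D(u_0, t)} ~\le~ \frac{C}{\est[\alpha](u_0)}\, e^{\epsilon |S_{\sigma_1-1}|} \qquad \forall\, t\ge 0. $$
By Lemma \ref{lem:exp_moment} there exists $\delta>0$ with $\E_\eta^\alpha e^{\delta|V_1|}<\infty$, so choosing $\epsilon\in(0,\delta)$ yields an integrable majorant.

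\medskip

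\textbf{Step 3 (Conclusion via DCT, and the final assertion).} Dominated convergence applied to Step 1 gives $\lim_{t\to\infty}\hat D(t)/D(u_0,t)=1$. For the ``in particular'' claim, slow variation of $L$ at $0$ (and hence of $t\mapsto D(u_0,t)=L(e^{-t})\est[\alpha](u_0)$ at $+\infty$) yields $D(u_0,t+s)/D(u_0,t)\to 1$, and then
$$ \frac{\hat D(t+s)}{\hat D(t)} ~=~ \frac{\hat D(t+s)}{D(u_0,t+s)}\cdot\frac{D(u_0,t+s)}{D(u_0,t)}\cdot\frac{D(u_0,t)}{\hat D(t)} ~\longrightarrow~ 1\cdot 1\cdot 1. $$

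\medskip

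The only real point requiring attention is the interplay in Step 2: the Potter-type bound $d^*(t+s)\le C e^{\epsilon|s|}L(e^{-t})$ costs us an exponential factor in $|s|$, and we must have enough exponential moment of the drift-less random step $V_1=S_{\sigma_1-1}$ to absorb it. This is exactly the content of Lemma \ref{lem:exp_moment}, whose proof, in turn, is the place where the strong moment assumption \eqref{A8} is consumed; once that lemma is in hand the present argument is a straightforward DCT.
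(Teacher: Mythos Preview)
Your proof is correct and follows essentially the same approach as the paper: rewrite the ratio as $\E_\eta^\alpha[h_t(U_{\sigma_1-1},S_{\sigma_1-1})]$, invoke Lemma~\ref{lem:slowvar} for the pointwise limit, and use Lemmata~\ref{lem:bounds_on_LTa} and~\ref{lem:exp_moment} to justify dominated convergence. One small slip: by \eqref{eq:defL} one has $D(u_0,t)=L(e^{-t})$, not $L(e^{-t})\est[\alpha](u_0)$, but this only changes the constant in your majorant and is harmless.
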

 \begin{proof}
  Recalling the definition of $h_t$ from Section \ref{sect:regular variation}, we have that
  \begin{align*}
   \hat D(t)/D(u_0,t)=\Ex[\eta]{\frac{D(U_{\s_1-1},t+S_{\s_1-1})}{D(u_0,t)}} ~=~ \Ex[\eta]{h_t(U_{\s_1-1},S_{\s_1-1})}
  \end{align*}
  
  Using Lemma \ref{lem:slowvar}, $\lim_{t \to \infty} h_t \equiv 1$. Lemmata  \ref{lem:exp_moment} and \ref{lem:bounds_on_LTa}  allow us to apply the dominated convergence theorem to obtain the assertion.
  \end{proof}

Now we can identify the slowly varying function.   
   
  \begin{thm}\label{thm:Llog}
   Assume that a function $\hat{D}$, such that ${\hat{D}(t+s)}/{\hat{D}(t)}\to1$ satisfies renewal equation \eqref{eq:renewal_hatD} with a directly Riemann integrable function $g$ and a nonarithmetic random variable $V_1$ such that $\Ex[\eta]{e^{\delta |V_1|}}<\8$ for some positive $\delta$. Then  $\lim_{t \to \infty} \hat D(t)/t $ exists and it is positive.
    \end{thm}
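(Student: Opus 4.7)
The plan is to treat \eqref{eq:renewal_hatD} as a classical one-dimensional driftless renewal equation, for which theory going back to Durrett--Liggett \cite{DL1983} produces linear asymptotics. First, I observe that the driftlessness of $(S_n)$ under $\Prob_u^\alpha$ in the critical case (Proposition \ref{prop:UnSn} with $m'(\alpha^-)=0$), combined with the regeneration structure of Lemma \ref{regenerationlemma}, yields $\E_\eta^\alpha V_1 = 0$; together with the exponential moment bound from Lemma \ref{lem:exp_moment} this says that the random walk $\tilde S_n := V_1 + \ldots + V_n$ (built from i.i.d.~copies of $V_1$ under $\Prob_\eta^\alpha$) is centered with finite variance and, by hypothesis, nonarithmetic.

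Next, I would iterate \eqref{eq:renewal_hatD} to obtain
\begin{equation*}
\hat D(t) ~=~ \E \hat D(t + \tilde S_n) ~-~ \sum_{k=0}^{n-1} \E g(t + \tilde S_k),
\end{equation*}
and reduce the analysis to a positive-drift renewal equation via Wiener--Hopf factorization. Let $\tau^+$ be the first strict ascending ladder epoch of $\tilde S_n$ and let $\xi := \tilde S_{\tau^+}$ denote the first ascending ladder height; since $\tilde S_n$ is centered with exponential tails, $\tau^+$ has finite moments, $\E \xi < \infty$, and $\xi$ is nonarithmetic. Define
\begin{equation*}
\tilde g(t) ~:=~ \E \Bigl[ \sum_{k=0}^{\tau^+ -1} g(t + \tilde S_k) \Bigr].
\end{equation*}
Using direct Riemann integrability of $g$ (Lemma \ref{lem:g_is_dRi}), finite moments of $\tau^+$, and the Potter-type bounds from Lemma \ref{lem:bounds_on_LTa}, I would show that $\tilde g$ is itself directly Riemann integrable. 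Restricted to the ladder chain, equation \eqref{eq:renewal_hatD} becomes a standard positive-drift renewal equation, to which Blackwell's renewal theorem applies, giving
\begin{equation*}
\sum_{n \ge 0} \tilde g(t + \xi_1 + \ldots + \xi_n) ~\sim~ \frac{t}{\E \xi} \int \tilde g(s)\,ds \qquad (t \to \infty),
\end{equation*}
with $\xi_1, \xi_2, \ldots$ i.i.d.~copies of $\xi$. Combining this with the slow-variation hypothesis $\hat D(t+s)/\hat D(t) \to 1$, which is needed to absorb the error term $\E \hat D(t + \tilde S_n)$ as $n \to \infty$, one obtains $\hat D(t)/t \to C$ with $C$ an explicit positive multiple of $\int g\,/\,\E \xi$. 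Positivity of $C$ is inherited from $g \not\equiv 0$, which holds because $g$ arises from a nontrivial fixed point $\LTfp$.

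The main obstacle will be the rigorous control of the error term $\E \hat D(t + \tilde S_n)$ as $n \to \infty$, since $\tilde S_n$ oscillates on both sides (cf.~\eqref{eq:oscillates2}) while $\hat D$ is only assumed to be slowly varying at $+\infty$. This requires a global Potter-type estimate on $\hat D$, which I would obtain by combining Lemma \ref{lem:bounds_on_LTa} with the exponential moments of $V_1$ (Lemma \ref{lem:exp_moment}); together these ensure that $\E \hat D(t + \tilde S_n)$ contributes only $o(t)$ after the Wiener--Hopf split. A secondary technical point is the verification of direct Riemann integrability of $\tilde g$, which I would handle by dominated convergence using the exponential tail of $\tau^+$ and the monotonicity of $t \mapsto e^{-\alpha t} g(t)$ inherited from Lemma \ref{lem:properties_of_G}.
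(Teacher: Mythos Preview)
Your overall route---pass to ascending ladder heights to convert the driftless equation \eqref{eq:renewal_hatD} into a positive-drift renewal equation, then apply the key renewal theorem---is exactly the Durrett--Liggett approach \cite[Theorem~2.18]{DL1983} to which the paper defers, so in spirit you are doing what the paper does.

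There is, however, a genuine error. You assert that ``$\tau^+$ has finite moments'' and later invoke ``the exponential tail of $\tau^+$''. Both claims are false for a \emph{centered} walk: if $\E \tau^+$ were finite, Wald's identity would give $\E \tilde S_{\tau^+} = (\E \tau^+)(\E V_1) = 0$, contradicting $\tilde S_{\tau^+} > 0$ a.s. In fact $\P(\tau^+ > n)$ is of order $n^{-1/2}$ under your hypotheses, so $\tau^+$ has no moment of order $\ge 1/2$, let alone exponential tails. Your proposed verification that $\tilde g$ is directly Riemann integrable---which you base on ``finite moments of $\tau^+$'' and ``dominated convergence using the exponential tail of $\tau^+$''---therefore collapses as written.

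The repair, implicit in \cite{DL1983}, does not use moments of $\tau^+$ at all but rather the \emph{location} of the pre-$\tau^+$ excursion. For $0 \le k < \tau^+$ one has $\tilde S_k \le 0$, so
\[
\tilde g(t) ~=~ \int_{(-\infty,0]} g(t+x)\,\nu(dx), \qquad \nu ~:=~ \sum_{k \ge 0} \P\bigl(\tilde S_k \in \cdot,\ k < \tau^+\bigr).
\]
By the standard duality lemma, $\nu$ is the (weak) descending-ladder renewal measure, which grows only linearly on $(-\infty,0]$ because the descending ladder height has finite mean once $V_1$ has finite variance. Since $e^{-\alpha t} g(t)$ is decreasing (inherited from Lemma~\ref{lem:properties_of_G}), $g$ decays exponentially as $t \to -\infty$; combined with direct Riemann integrability of $g$ on the right, this yields dRi of $\tilde g$ despite $\E \tau^+ = \infty$. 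Once this is in place, the rest of your outline (Blackwell for the ladder-height walk, absorbing the remainder via slow variation of $\hat D$ and the exponential moments of $V_1$) goes through essentially as you describe.
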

\begin{proof}[Source:]
 The proof is almost the same as the proof of Theorem 2.18 in \cite{DL1983}. Note that, although in \cite{DL1983} the derivative of $\hat D$ is used this can be easily avoided.
\end{proof}

\section{The Derivative Martingale}\label{sect:derivative martingale}

In this section, we finish the proof of Theorem \ref{thm:main2}, by proving the convergence of 
$$ \mathcal{W}_n(u) = \sum_{\abs{v}=n} \left[ S(v) { + } b(U(v)) \right] \, \est[\alpha](U(v)) e^{-\alpha S(v)} $$ to a nontrivial limit, which constitutes the exponent of fixed points. The assertions of Theorem \ref{thm:main2} are contained in the Theorem below, except for the identification of the slowly varying function, which was given in Section \ref{sect:L}, in particular in Theorem \ref{thm:Llog}.

%

\begin{thm}
Under Assumptions \eqref{A1}--{\eqref{A8}} and \eqref{A4c} or \eqref{A4f} instead of \eqref{A4}, the martingale $\mathcal{W}_n(u) $ for each $u \in \Sp$
has a nonnegative, nontrivial limit $\mathcal{W}(u)$, and $\LTfp(ru):= \Erw{e^{-r^\alpha \mathcal{W}(u)}}$ is a fixed point of $\ST$.
\end{thm}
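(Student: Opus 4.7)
Step 1 (martingale property). Setting $\widetilde{S}(v) := S(v) + b(U(v))$ and conditioning on $\B_n$, I would decompose the contribution to $\mathcal{W}_{n+1}(u)$ from the descendants of each level-$n$ vertex $v$ into a ``bulk'' piece proportional to $\widetilde S(v)$ and an ``increment'' piece proportional to $\widetilde S(vi) - \widetilde S(v)$. The bulk piece reduces to $\widetilde S(v)\, \est[\alpha](U(v))\, e^{-\alpha S(v)}$ via the eigenequation $\Pst[\alpha]\est[\alpha] = \est[\alpha]$ together with $m(\alpha) = 1$, while the increment piece has conditional expectation $\est[\alpha](U(v))\, e^{-\alpha S(v)} \cdot \E^{\alpha}_{U(v)}[S_1 + b(U_1) - b(U(v))] = 0$ by Eq.~\eqref{eq:b}. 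Summation over $|v| = n$ yields $\E[\mathcal{W}_{n+1}(u) \mid \B_n] = \mathcal{W}_n(u)$, with $\E \mathcal{W}_n(u) = \est[\alpha](u) b(u)$.

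Step 2 (a.s.\ convergence of the signed martingale). Following the truncation strategy of \cite{Biggins1997, Biggins2004}, for each $T > 0$ I introduce the nonnegative, $\B_n$-adapted process
\[
\mathcal{W}_n^T(u) := \sum_{|v|=n} (\widetilde S(v) + T)\, \est[\alpha](U(v))\, e^{-\alpha S(v)}\, \1[\{\widetilde S(v|k) \ge -T \text{ for all } k \le n\}],
\]
which is in fact a nonnegative submartingale (the mean-zero increments of $\widetilde S$ interacting with the one-sided truncation produce a nonnegative corrector). The many-to-one identity of Proposition~\ref{prop:many to one} reduces $\E \mathcal{W}_n^T(u)$ to $\est[\alpha](u)\, \E_u^\alpha[(\widetilde S_n + T)\, \1[\{\tau_T > n\}]]$, where $\tau_T = \inf\{k \ge 0 : \widetilde S_k < -T\}$ is the first passage below $-T$ of the centered Markov random walk $\widetilde S_n = S_n + b(U_n)$. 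Optional sampling of the martingale $\widetilde S_{n \wedge \tau_T}$, together with the boundedness of the increments of $\widetilde S$ under assumption~\eqref{A7} (using $|S(vi) - S(v)| \le -\log c$ and continuity of $b$ on the compact $\Sp$), yields $\sup_n \E \mathcal{W}_n^T(u) < \infty$. Doob's submartingale convergence theorem then gives a.s.\ convergence of $\mathcal{W}_n^T(u)$ to a finite limit $\mathcal{W}^T(u) \ge 0$. A parallel spine decomposition---using the Radon-Nikodym density $W_n(u)/\est[\alpha](u)$ with $W_n(u) = \sum_{|v|=n} \est[\alpha](U(v)) e^{-\alpha S(v)}$ from Proposition~\ref{prop:W}, and exploiting the regeneration structure from Lemma~\ref{regenerationlemma} together with the moment bounds \eqref{A6}--\eqref{A8}---controls $\E|\mathcal{W}_n(u) - \mathcal{W}_n^T(u)|$ uniformly in $n$ by a bound that vanishes as $T \to \infty$. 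A standard diagonal argument then promotes this to a.s.\ convergence of $\mathcal{W}_n(u)$ to a finite limit $\mathcal{W}(u)$.

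Step 3 (identification and fixed-point property). Combining Lemma~\ref{lem:LMt} with the asymptotic $L(r) \sim K' |\log r|$ established in Theorem~\ref{thm:Llog} (for some constant $K' > 0$) gives
\[
Z(u) = K' \lim_{t \to \infty}\, t \sum_{v \in \slineu[t]} \est[\alpha](U(v))\, e^{-\alpha S(v)} \qquad \Pfs,
\]
where $Z$ denotes the disintegration of the essentially unique fixed point of Theorem~\ref{thm:main}. On the dissecting stopping line $\slineu[t]$ (dissecting by Lemma~\ref{lem:maxpos}), assumption~\eqref{A7} forces $S(v) \in [t, t + c']$ with $c' = -\log c$, and boundedness of $b$ yields $\widetilde S(v) = t + O(1)$ uniformly on $\slineu[t]$. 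Hence the error in replacing $t$ by $\widetilde S(v)$ in the above limit is bounded by $(c' + \|b\|_\infty)\, W_{\slineu[t]}(u)$, which is $O(1/t)$ by the displayed formula and the fact that $L(e^{-t})/t \to K'$. Optional sampling of the martingale $\mathcal{W}_n$ along the stopping lines $\slineu[t]$---valid by Biggins-Chauvin theory for dissecting optional lines, combined with the uniform integrability inherent in Step~2---therefore identifies $\mathcal{W}(u) = Z(u)/K'$ almost surely. Strict positivity $\P{\mathcal{W}(u) > 0} = 1$ is inherited from Proposition~\ref{prop:dis}(4), and the fixed-point property holds because $\Erw{e^{-r^\alpha \mathcal{W}(u)}} = \Erw{e^{-(K')^{-1/\alpha}r)^\alpha Z(u) \cdot 1}}$ coincides with the Laplace transform of a positive rescaling of the given fixed point, which is again a fixed point by the form of fixed points in Theorem~\ref{thm:main}.

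The main obstacle is Step~2: rigorously implementing the spine decomposition and $L^1$-controlling the ``crossing'' contribution $\mathcal{W}_n(u) - \mathcal{W}_n^T(u)$ under the modest moment conditions requires first-passage estimates for the centered Markov random walk $\widetilde S_n$ that make essential use of the regeneration structure from Section~\ref{sect:preliminaries}, which is why the stronger assumptions \eqref{A4c} or \eqref{A4f} are imposed for this theorem.
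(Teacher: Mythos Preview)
Your overall strategy is plausible but much heavier than the paper's, and your Step~2 is bypassed entirely there. The paper does \emph{not} first prove a.s.\ convergence of $\mathcal{W}_n(u)$ by truncation/spine arguments and then identify the limit; it obtains convergence and identification in one stroke. From Theorem~\ref{thm:Llog} and Lemma~\ref{lem:slowvar} one has $(1-\LTfp(ru))/(K'|\log r|\,r^\alpha \est[\alpha](u)) \to 1$ uniformly in $u$. Proposition~\ref{prop:dis}\eqref{as2} (with $F\equiv 1$) already gives $\sum_{|v|=n}(1-\LTfp(\mL(v)^\top u)) \to Z(u)$ a.s.; writing each summand as $K' S^u(v)\,\est[\alpha](U^u(v))\,e^{-\alpha S^u(v)}$ times a ratio that tends to $1$ uniformly over $|v|=n$ (because $\min_{|v|=n} S^u(v)\to\infty$ by Lemma~\ref{lem:maxpos}) immediately yields $K'\sum_{|v|=n} S^u(v)\,\est[\alpha](U^u(v))\,e^{-\alpha S^u(v)} \to Z(u)$. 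Replacing $S^u(v)$ by $S^u(v)+b(U^u(v))$ is harmless since $b$ is bounded and $\min S^u(v)\to\infty$, so $\mathcal{W}_n(u)\to Z(u)/K'$ a.s.\ with no further work. Nontriviality and the fixed-point property follow from Proposition~\ref{prop:dis} exactly as in your Step~3.

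Regarding your Step~2 itself: in the standard Biggins--Kyprianou scheme the truncated process $\mathcal{W}_n^T$ is a nonnegative \emph{super}martingale, not a submartingale; and the control of $\E|\mathcal{W}_n(u)-\mathcal{W}_n^T(u)|$ via a spine decomposition is asserted at a level of vagueness where one cannot tell it would go through under the stated moments. Your Step~3 also routes through stopping lines and invokes ``uniform integrability inherent in Step~2'', but the derivative martingale is typically not uniformly integrable in the critical case, so optional sampling along $\slineu[t]$ would need a separate justification; the paper avoids this by working along generations. Finally, you attribute the role of \eqref{A4c}/\eqref{A4f} to first-passage estimates needed in Step~2, whereas in the paper these assumptions are used solely to identify $L(r)\sim K'|\log r|$ through the regeneration-based renewal argument of Section~\ref{sect:L}; once that is in hand, the convergence proof needs nothing more.
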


\begin{proof}
Let $M(u)$ be the disintegration of the (up to scaling) unique fixed point of $\ST$ (described in Theorem \ref{thm:main}). By Theorem \ref{thm:Llog}, combined with Eq. \eqref{eq:slowvar} from Lemma \ref{lem:slowvar}, there is $K' \in (0,\infty)$ such that 
$$ \lim_{r \to 0} \sup_{u \in \Sp} \abs{\frac{1 -  \LTfp(r u) }{r^\alpha \est[\alpha](u) K'\abs{\log(r)}} - 1} ~=~0.$$
Then by \eqref{as2} from Proposition \ref{prop:dis},
$$ \lim_{n \to \infty} \, \sum_{\abs{v}=n} K' S^u(v) \est[\alpha](U^u(v)) e^{-\alpha S^u(v)} \frac{1 - \LTfp(e^{-S^u(v)} U^u(v))}{K' S^u(v) \est[\alpha](U^u(v)) e^{-\alpha S^u(v)}} ~=~ Z(u) \quad \Pfs $$
As a continuous function on $\Sp$, $u \mapsto b(u)$ is bounded, and by Lemma \ref{lem:maxpos}, 
$$ \lim_{n \to \infty} \sup_{\abs{v}=n} \abs{\frac{S^u(v) + b(U^u(v))}{S^u(v)}-1}=0.$$ Therefore, we can replace $S^u(v)$ by $S^u(v) + b(U^u(v))$, and obtain
$$ \lim_{n \to \infty} \,\sum_{\abs{v}=n} \big[S^u(v)+ b(U^u(v)) \big] \est[\alpha](U^u(v)) e^{-\alpha S^u(v)} ~=~ K' Z(u) \quad \Pfs$$
This shows the $\Pfs$ convergence of $\mathcal{W}_n(u)$ to $\mathcal{W}(u):=K'Z(u)$. Then $\P{\mathcal{W}(u)>0}=1$ by \eqref{as5} of Proposition \ref{prop:dis}. 
That $\LTfp(ru)= \Erw{e^{-r^\alpha \mathcal{W}(u)}}$ is a fixed point follows immediately, since $\Erw{e^{-r^\alpha K' Z(u)}}$ is a fixed point for any $K'>0$.
\end{proof}

%

\end{document}